\documentclass[a4paper,10pt]{article}
\usepackage{amsfonts,amsmath,amsthm,dsfont}
\author{Krzysztof Paczka\thanks{Centre of Mathematics for Applications, University of Oslo, Norway, e-mail address: k.j.paczka@cma.uio.no.}}
\title{$G$-martingale representation in the $G$-L\'evy setting\thanks{The research leading to these results has received funding from the European Research Council under the European Community's Seventh Framework Programme (FP7/2007-2013) / ERC grant agreement no [228087].}}
\addtolength{\textwidth}{2cm}
\addtolength{\oddsidemargin}{-1cm}

\newtheorem{tw}{Theorem}
\newtheorem{defin}{Definition}
\newtheorem{ass}{Assumption}
\newtheorem{lem}[tw]{Lemma}
\newtheorem{cor}[tw]{Corollary}
\newtheorem{prop}[tw]{Proposition}
\theoremstyle{definition}
\newtheorem{rem}[tw]{Remark}
\DeclareMathOperator*{\esup}{ess\,sup}
\DeclareMathOperator*{\tr}{tr}

\begin{document}
\def\r0{\mathbb{R}^d_0}
    \def\E{\mathbb{E}}
    \def\GE{\hat{\mathbb{E}}}
    \def\cadlag{c\`{a}dl\`{a}g }
        \def\cliprd{C_{b,Lip}(\mathbb{R}^d)}
    \def\cliprn{C_{b,Lip}(\mathbb{R}^n)}
    \def\cliprdn{C_{b,Lip}(\mathbb{R}^{d\times n})}
    \def\cliprm{C_{b,Lip}(\mathbb{R}^m)}
    \def\cp{C^{\infty}_{p}(\mathbb{R}^n)}
    \def\clipr{C_{b,Lip}(\mathbb{R})}
    \def\lipt{Lip(\Omega_t)}
        \def\lipT{Lip(\Omega_T)}
    \def\lip{Lip(\Omega)}
    \def\qB{\langle B\rangle}
    \def\P{\mathbb{P}}
    \def\D{\mathbb{D}}
    \def\I{\mathds{1}}
    \def\N{\mathbb{N}}
        \def\n{\mathcal{N}}
    \def\L12{\mathbb{L}^{1,2}}
    \def\R{\mathbb{R}}
    \def\ae{\mathcal{A}_{\EE}}
    \def\Z{\mathbb{Z}}
        \def\A{\mathcal{A}}
    \def\H{\mathcal{H}}
        \def\tl{\tau_{\lambda}}
     \def\G{\mathcal{G}}
          \def\a{\mathcal{A}}
    \def\C{\mathbb{C}}
        \def\L{\mathbb{L}}
    \def\Q{\mathbb{Q}}
        \def\q{\mathcal{Q}}
    \def\S{\mathcal{S}}
        \def\s{\mathbb{S}}
    \def\B{\mathcal{B}}
        \def\v{\mathcal{V}}
	\def\u{\mathcal{U}}
    \def\b{\mathbb{B}}
    \def\fil{\mathbb{F}}
    \def\F{\mathcal{F}}
    \def\EE{\mathcal{E}}
    \def\m{\mathcal{M}}
    \def\p{\mathcal{P}}
    \def\ito{It\^o }
    \def\levy{L\'evy }
    \def\itolevy{It\^o-L\'evy }
        \def\levykhintchine{L\'evy-Khintchine }
    \def\dom{Dom\,\delta}
    \def\LG{L^2_G(\Omega)}
    \def\LGp{L^p_G(\Omega)}
    \def\LGT{L^2_G(\Omega_T)}
        \def\LGpl{L^2_{G_{\epsilon}}(\mathcal{F}_T)}
    \def\lgT{L^2_G(0,T)}
     \def\mgT{M^2_G(0,T)}
         \def\hSTR{\mathcal{H}^S_G([0,T]\times\mathbb{R}^d_0)}
    \def\hgTR{\mathcal{H}^2_G([0,T]\times\mathbb{R}^d_0)}
        \def\hgTRo{\mathcal{H}^1_G([0,T]\times\mathbb{R}^d_0)}
	\def\hhR{\hat{\mathcal{H}}^2_G([0,T]\times\mathbb{R}^d_0)}
    \def\hgT{\mathcal{H}^2_G(0,T)}
    \def\mg1T{M^1_G(0,T)}
	\def\Mg1T{\mathcal{M}^1_G(0,T)}
    \def\hMT{\hat{\mathcal{M}}^2_G(0,T)}
    \def\hM1T{\hat{\mathcal{M}}^1_G(0,T)}
    \def\hMTp{\hat{\mathcal{M}}^p_G(0,T)}
    \def\mgT{{M}^1_G(0,T)}
    \def\d0{\mathbb{D}_0(\R^+,\R^{d})}
    \def\da{\mathbb{D}_0(\R^+,\R^{2d})}

\maketitle
\begin{abstract}
 In this paper we give the decomposition of a martingale under the sublinear expectation associated with a $G$-\levy process $X$ with finite activity and without drift. We prove that such a martingale consists of an \ito integral w.r.t. continuous part of a $G$-\levy process, compensated \itolevy integral w.r.t. jump measure associated with $X$ and a non-increasing continuous $G$-martingale
starting at 0.
	
	\noindent\textbf{Mathematics Subject Classification 2010:}  60G44, 60G51, 60H05.

\noindent\textbf{Key words:}  $G$-\levy process, \ito calculus, martingale representation, non-linear expectations.
\end{abstract}

\section{Introduction}
In the recent years the problem of model uncertainty and the stochastic calculus under the family of non-dominated probability measures has attracted a lot of attention. The motivation for such problems comes from the finance: the financial models depend on some parameters which are not known a priori and need to be estimated. However, the choice of the parameters may strongly influence the conclusions drawn from the model such as a valuation of derivatives or their hedging strategies. Therefore, one clearly see the necessity of considering a family of models and taking a robust approach to them. 

For many years the mathematicians considered the family of models (i.e. probability measures) which could be dominated by a reference probability measure. Such model uncertainty problems reflect the drift uncertainty and could be analysed using $g$-expectation and BSDE's. However, if the volatility of a financial asset is a source of uncertainty, one needs to consider a family of models which are mutually singular and cannot be dominated by a single reference measure. Shige Peng proposed in \cite{Peng_GBM} to analyse such problems by introducing a process called $G$-Brownian motion  defined on a space equipped in a sublinear expectation called $G$-expectation. Whereas $g$-expectation is defined via BSDE's, $G$-expectation is constructed with viscosity solutions of non-linear heat equation. Denis and Martini  proposed a different formulation of the volatility uncertainty problem via so-called quasi-sure analysis, which directly works with the family of non-dominated probability measures on a canonical space (see \cite{Martini}). It turns out that both approaches are tightly connected (see \cite{Denis_function_spaces}) and lead to the stochastic calculus with the It\^o formula, $G$-SDE's, martingale representation and $G$-BSDE's,  as developed in \cite{Peng_GBM}, \cite{Peng_skrypt}, \cite{Soner_mart_rep}, \cite{Song_Mart_decomp}, \cite{Soner_quasi_anal}, \cite{Peng_general_rep}, \cite{Peng_bsde}, \cite{Girsanov}.

Even though $G$-Brownian motion spurred a lot of interest, its applicability to finance is limited, as most of the financial models rely on the jump processes like \levy processes as a driver of dynamics. However, Peng and Hu  introduced in \cite{Peng_levy} a process called a $G$-\levy process which incorporates three sources of uncertainty: drift, volatility and \levy measure. A $G$-\levy process is a generalization of  $G$-Brownian motion: it is a process consisting of a pure-jump part and a continuous part which might be seen a generalized $G$-Brownian motion (i.e. Brownian motion with both drift and volatility uncertainty). $G$-\levy process is also defined on a space equipped with a sublinear expectation defined by some non-linear IPDE reflecting all three sources of uncertainty. Ren in \cite{Ren} showed that such a sublinear expectation might be represented as supremum of ordinary expectations over a relatively compact family of probability measures (which again cannot be dominated by a single reference probability measure). We also showed in \cite{moj} that these probability measures can be characterized as laws of some \itolevy integrals. For a $G$-\levy process with finite activity we also introduced a good definition of an integral w.r.t. its jump part and we showed that both the \ito formula holds and that (B)SDE's have unique strong solution under the standard Lipschitz conditions. More information on the $G$-\levy processes can be found in Section \ref{sec_preliminaries}.

In this paper we investigate the martingale representation in the $G$-\levy setting. We assume that there is no drift uncertainty and that the volatility and jump uncertainties are unrelated. Under such assumptions we show that a martingale consists of three parts: an \ito integral part w.r.t. $G$-Brownian motion, a non-increasing continuous martingale (which also shows up in the $G$-Brownian motion setting) and a compensated integral w.r.t. Poisson random measure associated with jumps of the $G$-\levy process. The important feature of these three components is that only the $G$-Brownian motion integral is a symmetric martingale (which means that it is a martingale for all considered probability measures), whereas the other components might have only supermartingale property under some probabilities. 

The structure of the paper is as follows. In Section 2 we give an introduction to the framework and present the most important results used throughout the paper. In Section 3 we show that under the second order non-degeneracy condition the viscosity solution of integro-partial DE is smooth. In Section 4 we introduce a compensation of the integral w.r.t. Poisson random measure associated with a $G$-\levy process and show that the compensated integral is a martingale under the sublinear expectation. Section 5 is devoted to the a priori estimates for the postulated decomposition of martingales. The representation for a simple class of random variables is established in Section 6, whereas in Section 7 we show that the decomposition is true for a relatively wide class of random variables. 

\section{Preliminaries}\label{sec_preliminaries}
	Let $\Omega$ be a given space and $\H$  be a vector lattice of real functions defined on $\Omega$, i.e. a linear space containing $1$ such that $X\in\H$ implies $|X|\in\H$. We will treat elements of $\H$ as random variables.
	\begin{defin}\label{def_sublinear_exp}
		\emph{A sublinear expectation} $\E$ is a functional $\E\colon \H\to\R$ satisfying the following properties
		\begin{enumerate}
			\item \textbf{Monotonicity:} If $X,Y\in\H$ and $X\geq Y$ then $\E[ X]\geq\E [Y]$.
			\item \textbf{Constant preserving:} For all $c\in\R$ we have $\E [c]=c$.
			\item \textbf{Sub-additivity:} For all $X,Y\in\H$ we have $\E [X] - \E[Y]\leq\E [X-Y]$.
			\item \textbf{Positive homogeneity:} For all $X\in\H$  we have $\E [\lambda X]=\lambda\E [X]$, $\forall\,\lambda\geq0$.
		\end{enumerate}
		The triple $(\Omega,\H,\E)$ is called \emph{a sublinear expectation space}.
	\end{defin}
	
	We will consider a space $\H$ of random variables having the following property: if\break $X_i\in\H,\ i=1,\ldots n$ then
	\[
		\phi(X_1,\ldots,X_n)\in\H,\quad \forall\ {\phi\in\cliprn},
	\]
	where $\cliprn$ is the space of all bounded Lipschitz continuous functions on $\R^n$. We will express the notions of a distribution and an independence of the random vectors using test functions in $\cliprn$.
	\begin{defin}
		An $m$-dimensional random vector $Y=(Y_1,\ldots,Y_m)$ is said to be independent of an $n$-dimensional random vector $X=(X_1,\ldots,X_n)$ if for every $\phi\in C_{b,Lip}(\R^n\times \R^m)$
		\[
			\E[\phi(X,Y)]=\E[\E[\phi(x,Y)]_{x=X}].
		\]
		Let $X_1$ and $X_2$ be  $n$-dimensional random vectors defined on sublinear random spaces\break $(\Omega_1,\H_1,\E_1)$ and $(\Omega_2,\H_2,\E_2)$ respectively. We say that $X_1$ and $X_2$ are identically distributed and denote it by $X_1 \sim X_2$, if for each $\phi\in\cliprn$ one has
		\[
			\E_1[\phi(X_1)]=\E_2[\phi(X_2)].
		\]
	\end{defin}
	Now we give the definition of $G$-\levy process (after \cite{Peng_levy}).	
	\begin{defin}	
		Let $X=(X_t)_{t\geq0}$ be a $d$-dimensional \cadlag process on a sublinear expectation space $(\Omega,\H, \E)$. We say that $X$ is a \levy process if:
		\begin{enumerate}
			\item $X_0=0$,
			\item for each $t,s\geq 0$ the increment $X_{t+s}-X_{t}$ is independent of $(X_{t_1},\ldots, X_{t_n})$ for every $n\in\N$ and every partition $0\leq t_1\leq\ldots\leq t_n\leq t$,
			\item the distribution of the increment $X_{t+s}-X_t,\ t,s\geq 0$ is stationary, i.e.\ does not depend on $t$.
		\end{enumerate}
		Moreover, we say that a \levy process $X$ is a $G$-\levy process, if satisfies additionally following conditions
		\begin{enumerate}\setcounter{enumi}{3}
			\item there a $2d$-dimensional \levy process $(X^c_t,X^d_t)_{t\geq0}$ such for each $t\geq0$ $X_t=X_t^c+X_t^d$,
			\item\label{condition} processes $X^c$ and $X^d$ satisfy the following growth conditions
			\[
					\lim_{t\downarrow0} \E[|X^c_t|^3]t^{-1}=0;\quad \E[|X^d_t|]<Ct\ \textrm{for all}\ t\geq0.
			\]
		\end{enumerate}
	\end{defin}
	\begin{rem}
		The condition \ref{condition} implies that $X^c$ is a $d$-dimensional generalized $G$-Brownian motion (in particular, it has continuous paths), whereas the jump part $X^d$ is of finite variation.
	\end{rem}
	Peng and Hu noticed in their paper that each $G$-\levy process $X$ might be characterized by a non-local operator $G_X$.
	\begin{tw}[\levykhintchine representation, Theorem 35 in \cite{Peng_levy}]
		Let $X$ be a $G$-\levy process in $\R^d$. For every $f\in C^3_b(\R^d)$ such that $f(0)=0$ we put
		\[
			G_X[f(.)]:=\lim_{\delta\downarrow 0}\, \E[f(X_{\delta})]\delta^{-1}.
		\]
		The above limit exists. Moreover, $G_X$ has the following \levykhintchine representation
		\[
			G_X[f(.)]=\sup_{(v,p,Q)\in \u}\left\{\int_{\r0} f(z)v(dz)+\langle Df(0),q\rangle + \frac{1}{2}\tr[D^2f(0)QQ^T] \right\},
		\]
		where $\r0:=\R^d\setminus\{0\}$, $\u$ is a subset $\u\subset \m(\r0)\times \R^d\times\R^{d\times d}$ and $\m(\r0)$ is a set of all Borel measures on $(\r0,\B(\r0))$. We know additionally that $\u$ has the property
		\begin{equation}\label{eq_property_of_u}
			\sup_{(v,p,Q)\in \u}\left\{\int_{\r0} |z|v(dz)+|q|+ \tr[QQ^T] \right\}<\infty.		
		\end{equation}

	\end{tw}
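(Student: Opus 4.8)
The plan is to proceed in two stages: first establish that the limit defining $G_X$ exists and that $G_X$ is a sublinear functional, and then extract the \levykhintchine representation from an abstract envelope representation of sublinear functionals combined with a Courrège-type analysis of the dominated linear functionals.

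First I would exploit the stationary independent increments to reduce everything to the infinitesimal behaviour of the operator $T_\delta f(x):=\E[f(x+X_\delta)]$. The independence axiom yields the semigroup identity $T_{t+s}=T_t\circ T_s$, so that $G_X[f]$ is precisely the generator of $(T_t)$ evaluated at the origin. To show that $\E[f(X_\delta)]\delta^{-1}$ converges for $f\in C^3_b(\R^d)$ with $f(0)=0$, I would split $X_\delta=X^c_\delta+X^d_\delta$ and apply Taylor's formula. On the continuous part the third-order remainder is of order $|X^c_\delta|^3$, and the first growth condition in \ref{condition}, namely $\E[|X^c_\delta|^3]\delta^{-1}\to0$, forces this remainder to vanish after division by $\delta$; what survives is a first-order term in $Df$ and a second-order term in $D^2f$. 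On the jump part the bound $\E[|X^d_\delta|]\le C\delta$ together with the finite-activity assumption keeps the jump contribution first order in $\delta$ and produces an integral functional of $f$ against a finite measure. The step I expect to fight with is that $\E$ is not additive, so the continuous and jump contributions cannot simply be summed; one must control the cross terms and justify passing to the limit using sub-additivity together with the uniform moment estimates from \ref{condition}.

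Once existence is secured, I would verify that $f\mapsto G_X[f]$ inherits monotonicity, sub-additivity, positive homogeneity and $G_X[0]=0$ directly from the corresponding properties of $\E$ by passing to the limit, so that $G_X$ is a sublinear functional on the test functions. A sublinear functional of this type admits an envelope representation $G_X[f]=\sup_{L\in\mathcal{L}}L[f]$ over the family $\mathcal{L}$ of linear functionals it dominates; this is the standard Hahn--Banach/Daniell-type argument underlying every sublinear expectation. Monotonicity of $G_X$ renders each such $L$ monotone in the sense that $f\geq0$, $f(0)=0$ implies $L[f]\geq0$, and since $G_X$ satisfies the positive maximum principle (if $f\leq0=f(0)$ then $G_X[f]\leq G_X[0]=0$), every dominated $L$ inherits it as well.

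The final step is to identify the admissible $L$. By a Courrège-type theorem, any linear functional on $C^3_b$ that respects the positive maximum principle and vanishes at $f\equiv0$ decomposes into a local part depending only on $Df(0)$ and $D^2f(0)$ and a non-local part integrating $f$ against a Borel measure on $\r0$; this yields exactly the triplet form $\int_{\r0}f(z)v(dz)+\langle Df(0),q\rangle+\tfrac{1}{2}\tr[D^2f(0)QQ^T]$ with $QQ^T$ positive semidefinite, and collecting all such triplets defines $\mathcal{U}$. To obtain the uniform bound \eqref{eq_property_of_u} I would test $G_X$ against carefully chosen functions: coordinate-type functions behaving like $|z|$ away from the origin, bounded via $\E[|X^d_\delta|]\le C\delta$, control $\int_{\r0}|z|v(dz)$, while functions prescribing the first and second derivatives at the origin control $|q|$ and $\tr[QQ^T]$. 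The finiteness of $G_X$ on these test functions, guaranteed once more by \ref{condition}, then transfers to the asserted uniform finiteness of $\mathcal{U}$.
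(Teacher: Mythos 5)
The paper you are comparing against does not prove this statement at all: the theorem is imported verbatim from Hu and Peng (\cite{Peng_levy}, Theorem 35), so the only meaningful benchmark is the original proof there. Measured against that, the second half of your plan is in substance the route taken in the source, and it is sound: $G_X$ inherits sub-additivity, positive homogeneity and monotonicity from $\mathbb{E}$ in the limit; a Hahn--Banach/envelope argument gives $G_X[f]=\sup_L L[f]$ over the dominated linear functionals; each such $L$ satisfies the positive maximum principle and is therefore of Courr\`ege type, i.e.\ an integral of $f$ against a Borel measure on $\mathbb{R}^d_0$ plus a local part in $(Df(0),D^2f(0))$; and the uniform bound \eqref{eq_property_of_u} is extracted by testing with functions behaving like $|z|$, $\langle a,z\rangle$ and $\langle Az,z\rangle$ near the origin.

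The genuine gap is in the first half, the existence of the limit, which is the analytic core of the theorem and which your sketch does not actually address. Expanding $f(X_\delta)$ by Taylor at a single time $\delta$ and invoking the growth conditions only yields two-sided bounds of order $\delta$, i.e.\ boundedness of $\delta^{-1}\mathbb{E}[f(X_\delta)]$; because $\mathbb{E}$ is sublinear, the upper and lower sandwich bounds produced by sub-additivity differ at first order in $\delta$ (the gap $\mathbb{E}[B]+\mathbb{E}[-B]$ for the first- and second-order terms is itself of order $\delta$ due to mean and volatility uncertainty), so no single-time expansion can identify a limit. What actually forces convergence is the semigroup structure you mention but never use: stationarity and independence of increments give an approximate additivity in time,
\[
\bigl|\mathbb{E}[f(X_{t+s})]-\mathbb{E}[f(X_t)]-\mathbb{E}[f(X_s)]\bigr|\le s\,\rho(t),\qquad \rho(t)\to 0\ \text{as}\ t\downarrow 0,
\]
after which a Fekete-type refinement argument shows that $\delta^{-1}\mathbb{E}[f(X_\delta)]$ is Cauchy. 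Proving this inequality with an error of order $s\,\rho(t)$ --- rather than the useless $O(\sqrt{s})$ error that the crude bound $|\mathbb{E}[A]-\mathbb{E}[B]|\le\mathbb{E}[|A-B|]$ yields, since $\mathbb{E}[|X^c_s|]$ is not $O(s)$ --- is precisely where the third-moment condition on $X^c$ and a careful second-order expansion must be combined; none of this appears in the proposal, and it is the part you yourself flag as unresolved. Two further slips: you invoke a ``finite-activity assumption,'' but no such hypothesis exists in this theorem (finite activity is Assumption 1 of this paper, imposed later and only for the martingale-representation results; here the measures $v$ may have infinite total mass, and only $\sup_v\int_{\mathbb{R}^d_0}|z|\,v(dz)<\infty$ is part of the \emph{conclusion}). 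Also, the Courr\`ege-type step by itself only yields $\int(|z|^2\wedge1)\,v(dz)<\infty$, so the stronger integrability $\int_{\mathbb{R}^d_0}|z|\,v(dz)<\infty$ implicit in your triplet form (which has no truncation in the drift term) must itself be derived from $\mathbb{E}[|X^d_t|]\le Ct$ via the testing argument rather than taken for granted.
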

	\begin{tw}[Theorem 36 in \cite{Peng_levy}]
			Let $X$ be a $d$-dimensional $G$-\levy process.  For each $\phi\in\cliprd$, define $u(t,x):=\E[\phi(x+X_t)]$. Then $u$ is the unique viscosity solution of the following integro-PDE
			\begin{align}\label{eq_integroPDE}
				0=&\partial_tu(t,x)-G_X[u(t,x+.)-u(t,x)]\notag\\
				=&\partial_tu(t,x)-\sup_{(v,p,Q)\in \u}\left\{\int_{\r0} [u(t,x+z)-u(t,x)]v(dz)\right.\notag\\
				&\left.+\langle Du(t,x),q\rangle + \frac{1}{2}\tr[D^2u(t,x)QQ^T] \right\}
			\end{align}
			with initial condition $u(0,x)=\phi(x)$.
	\end{tw}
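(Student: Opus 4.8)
The plan is to read off a semigroup (dynamic programming) identity for $u$ from the independence and stationarity of the increments of $X$, and then to combine it with the definition of $G_X$ as the infinitesimal generator $G_X[f]=\lim_{\delta\downarrow0}\E[f(X_\delta)]\delta^{-1}$ in order to verify the viscosity sub- and supersolution inequalities; uniqueness will then come from a comparison principle. First I would record the elementary regularity of $u$. Since $\phi\in\cliprd$ is bounded and $L_\phi$-Lipschitz, monotonicity and sub-additivity of $\E$ give that $u(t,\cdot)$ is bounded and $L_\phi$-Lipschitz uniformly in $t$; the growth condition \ref{condition}, via $\E[|X_\delta|]\le\E[|X_\delta^c|]+\E[|X_\delta^d|]\le C(\delta^{1/2}+\delta)$, then yields $\tfrac12$-H\"older continuity in time. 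In particular $u$ is continuous, and $u(0,x)=\E[\phi(x+X_0)]=\phi(x)$, so the initial condition holds.

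The key step is the identity $u(t+\delta,x)=\E[u(t,x+X_\delta)]$. Writing $X_{t+\delta}=X_\delta+(X_{t+\delta}-X_\delta)$, the increment $X_{t+\delta}-X_\delta$ is independent of $X_\delta$ and, by stationarity, distributed as $X_t$; applying the definition of independence to $\Phi(z,y)=\phi(x+z+y)$ and integrating out the increment first gives
\[
    u(t+\delta,x)=\E\bigl[\,\E[\phi(x+z+X_t)]_{z=X_\delta}\,\bigr]=\E[u(t,x+X_\delta)].
\]
To see that $u$ is a subsolution, take a smooth test function $\psi$ with $\psi\ge u$ and $\psi(t_0,x_0)=u(t_0,x_0)$ (using boundedness of $u$ we may assume the domination is global, which is what legitimizes the nonlocal term). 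For $0<\delta<t_0$ the identity gives $\psi(t_0,x_0)=u(t_0,x_0)=\E[u(t_0-\delta,x_0+X_\delta)]\le\E[\psi(t_0-\delta,x_0+X_\delta)]$, hence
\[
    0\le\E\bigl[\psi(t_0-\delta,x_0+X_\delta)-\psi(t_0-\delta,x_0)\bigr]+\bigl[\psi(t_0-\delta,x_0)-\psi(t_0,x_0)\bigr].
\]
The deterministic bracket equals $-\delta\,\partial_t\psi(t_0,x_0)+o(\delta)$. For the expectation set $f_\delta(z):=\psi(t_0-\delta,x_0+z)-\psi(t_0-\delta,x_0)$ and $f_0(z):=\psi(t_0,x_0+z)-\psi(t_0,x_0)$; both lie in $C^3_b(\R^d)$ and vanish at $0$, and smoothness of $\psi$ gives $|f_\delta(z)-f_0(z)|\le C\delta(|z|\wedge1)$, so that $\E[|f_\delta(X_\delta)-f_0(X_\delta)|]\le C\delta\,\E[|X_\delta|]=o(\delta)$. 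Dividing by $\delta$, applying the definition of $G_X$ to $f_0$, and letting $\delta\downarrow0$ produces
\[
    \partial_t\psi(t_0,x_0)-G_X[\psi(t_0,x_0+\cdot)-\psi(t_0,x_0)]\le0,
\]
the nonlocal term being finite by \eqref{eq_property_of_u}. The supersolution inequality is obtained symmetrically by touching $u$ from below and reversing every inequality.

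Finally, uniqueness would follow from a comparison principle for bounded, uniformly continuous viscosity sub- and supersolutions of the nonlocal Hamilton--Jacobi--Bellman equation \eqref{eq_integroPDE}. I expect this comparison step to be the main obstacle: the operator is a supremum over $\u$ of linear integro-differential generators, so the doubling-of-variables argument must simultaneously control the second-order local term and the nonlocal integral term, uniformly in $(v,q,Q)\in\u$. The uniform bounds in \eqref{eq_property_of_u}---finiteness of $\int_{\r0}|z|\,v(dz)$, $|q|$ and $\tr[QQ^T]$, together with the finite activity of the jumps---are precisely what is needed to estimate the integral term and to run the standard comparison argument; once comparison is available, any two viscosity solutions coincide, and in particular both equal $u$.
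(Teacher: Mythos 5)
This statement is not proven in the paper at all --- it is quoted verbatim as Theorem 36 of \cite{Peng_levy} --- and your proposal reconstructs essentially the proof given in that reference: the semigroup identity $u(t+\delta,x)=\E[u(t,x+X_\delta)]$ from stationarity and independence of increments, verification of the viscosity sub- and supersolution inequalities through the generator $G_X$, and uniqueness deferred to a comparison principle for the nonlocal HJB equation, which is also how the original source handles it. Your execution is correct --- in particular the two-sided estimate $\E[|f_\delta(X_\delta)-f_0(X_\delta)|]=o(\delta)$ is exactly what neutralizes the asymmetry of the sublinear expectation, so ``reversing the inequalities'' for the supersolution case is legitimate --- up to one cosmetic point: the stated axioms give $\E[|X^c_\delta|]\le\left(\E[|X^c_\delta|^3]\right)^{1/3}=o(\delta^{1/3})$ rather than the claimed $C\delta^{1/2}$, which still suffices both for continuity of $u$ in time and for every $o(\delta)$ estimate you use.
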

	
	It turns out that the set $\u$ used to represent the non-local operator $G_X$ fully characterize $X$, namely having $X$ we can define $\u$ satysfying eq.\ (\ref{eq_property_of_u}) and vice versa.
	\begin{tw}
		Let $\u$ satisfy (\ref{eq_property_of_u}). Consider the canonical probability space $\Omega:=\d0$ of all \cadlag functions taking values in $\R^{d}$ equipped with the Skorohod topology. Then there exists a sublinear expectation $\GE$ on $\d0$ such that the canonical process $(X_t)_{t\geq0}$ is a $G$-\levy process satisfying \levykhintchine representation with the same set $\u$.
	\end{tw}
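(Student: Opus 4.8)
The plan is to build $\GE$ from the flow of viscosity solutions of the integro-PDE (\ref{eq_integroPDE}), exploiting the Markovian structure that stationarity and independence of increments force on a \levy process. First I would fix the class of test functions $\cliprd$ and, for every $\phi\in\cliprd$, let $u^\phi$ be the unique viscosity solution of (\ref{eq_integroPDE}) with $u^\phi(0,\cdot)=\phi$, where the nonlocal operator $G_X$ is the one generated by the given set $\u$ through the \levykhintchine formula; this operator is well defined precisely because (\ref{eq_property_of_u}) holds. Setting $(T_t\phi)(x):=u^\phi(t,x)$ then defines a family of operators on $\cliprd$. Before going further I would check that each $T_t$ maps $\cliprd$ into itself with the same sup-bound and Lipschitz constant: boundedness follows by comparing $u^\phi$ with constant sub- and supersolutions, while the Lipschitz estimate follows from the translation invariance of the operator in $x$ together with the comparison principle, since shifting $x$ and adding a constant again produces a solution.

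Next I would define $\GE$ on cylinder functionals. For $0=t_0<t_1<\cdots<t_n$ and $\phi\in\cliprdn$ consider $\xi=\phi(X_{t_1},X_{t_2}-X_{t_1},\ldots,X_{t_n}-X_{t_{n-1}})$ and put $\GE[\xi]:=\psi_0$, where $\psi_n:=\phi$ and, for $k=n,\ldots,1$,
\[
\psi_{k-1}(x_1,\ldots,x_{k-1}):=\bigl(T_{t_k-t_{k-1}}\,\psi_k(x_1,\ldots,x_{k-1},\cdot)\bigr)(0).
\]
The crucial point is \emph{consistency}: the value $\GE[\xi]$ must be unchanged when the partition is refined by inserting extra time points. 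This reduces to the semigroup identity $T_{s+t}=T_s\circ T_t$, which is an immediate consequence of the uniqueness part of the well-posedness theory for (\ref{eq_integroPDE}), because both $T_{s+t}\phi$ and $T_s(T_t\phi)$ are the value at time $s+t$ of the solution started from $\phi$. Granting this, $\GE$ is unambiguously defined on the linear space $\lip$ of all such cylinder functionals.

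Then I would verify the axioms of Definition \ref{def_sublinear_exp} and the \levy structure. Monotonicity and constant preservation ($T_tc=c$) follow from comparison, while sub-additivity and positive homogeneity of $G_X$ — immediate from its representation as a supremum of linear functionals over $\u$ — propagate to each $T_t$ by comparison and then to $\GE$ through the backward recursion. That the coordinate process is a \levy process is built into the construction: $X_0=0$; the recursion, by integrating out the last increment against a semigroup started afresh at $0$, makes $X_{t+s}-X_t$ independent of the past in the required sense; and stationarity holds since $T_{t_k-t_{k-1}}$ depends only on the length $t_k-t_{k-1}$. To identify the generator I would compute, for $f\in C^3_b(\R^d)$ with $f(0)=0$,
\[
\lim_{\delta\downarrow0}\GE[f(X_\delta)]\delta^{-1}=\lim_{\delta\downarrow0}\frac{u^f(\delta,0)-f(0)}{\delta}=\partial_t u^f(0,0)=G_X[f(\cdot)],
\]
so $X$ is a $G$-\levy process with \levykhintchine representation given by the same $\u$; the splitting $X=X^c+X^d$ and the growth condition \ref{condition} then follow from separating the $Q$-part and the $v$-part of $\u$ and from the first-moment bound in (\ref{eq_property_of_u}).

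The main obstacle is the analytic well-posedness of the nonlinear integro-PDE (\ref{eq_integroPDE}): everything above rests on existence, uniqueness and the comparison principle for its viscosity solutions, which is delicate because the nonlocal term is driven by an entire family of \levy measures ranging over $\u$ and the operator is genuinely degenerate. I would either invoke the results of \cite{Peng_levy} directly or reprove comparison by the doubling-of-variables technique adapted to nonlocal operators. Once $\GE$ is established on $\lip$, the passage to the canonical Skorohod space $\d0$ and the extension to a larger domain is routine: one completes $\lip$ under the norm $\|\xi\|:=\GE[|\xi|]$, and the coordinate process on $\d0$ is the desired $G$-\levy process by construction.
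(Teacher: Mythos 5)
Your proposal follows essentially the same route as the paper: the paper defers the full argument to Theorems 38 and 40 of \cite{Peng_levy} and presents exactly the construction you describe — define $\GE$ on simple variables $\phi(X_{t+s}-X_t)$ via the viscosity solution of (\ref{eq_integroPDE}), extend to cylinder random variables in $\lip$ by the same backward recursion, and complete under $\|\cdot\|_{L^p_G}$. Your additional verification sketches (semigroup consistency, the sublinearity axioms via comparison, identification of the generator) are the content of the cited Peng--Hu theorems, which you also acknowledge invoking for the analytic well-posedness, so the two arguments coincide in substance.
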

	The proof might be found in \cite{Peng_levy} (Theorem 38 and 40). We will give however the construction of $\GE$, as it is important to understand it. 
	
		Begin with defining the sets of random variables. Put
		\begin{align*}
			\lipT:=&\{\xi\in L^0(\Omega)\colon \xi=\phi(X_{t_1},X_{t_2}-X_{t_1},\ldots,X_{t_n}-X_{t_{n-1}}),\\ &\phi\in C_{b,Lip}(\R^{d\times n}),\ 0\leq t_1<\ldots<t_n<T\},
		\end{align*}
		where $X_t(\omega)=\omega_t$ is the canonical process on the space $\d0$ and $L^0(\Omega)$ is the space of all random variables, which are measurable to the filtration generated by the canonical process. We also set
		\[
			\lip:=\bigcup_{T=1}^{\infty}\, \lipT.
		\]
		Firstly, consider the random variable $\xi=\phi(X_{t+s}-X_{t})$, $\phi\in\cliprd$. We define
		\[
			\GE[\xi]:=u(s,0),
		\]
		where $u$ is a unique viscosity solution of integro-PDE (\ref{eq_integroPDE}) with the initial condition $u(0,x)=\phi(x)$. For general
		\[
			\xi=\phi(X_{t_1},X_{t_2}-X_{t_1},\ldots,X_{t_n}-X_{t_{n-1}}),\quad \phi\in C_{b,Lip}(\R^{d\times n})
		\]
		we set $\GE[\xi]:=\phi_n$, where $\phi_n$ is obtained via the following iterated procedure
		\begin{align*}
			\phi_1(x_1,\ldots,x_{n-1})&=\GE[\phi(x_1,\ldots, X_{t_n}-X_{t_{n-1}})],\\
			\phi_2(x_1,\ldots,x_{n-2})&=\GE[\phi_1(x_1,\ldots, X_{t_{n-1}}-X_{t_{n-2}})],\\
			&\vdots\\
			\phi_{n-1}(x_1)&=\GE[\phi_{n-1}(x_1,X_{t_{2}}-X_{t_{1}})],\\
			\phi_n&=\GE[\phi_{n-1}(X_{t_{1}})].
		\end{align*}
		Lastly, we extend definition of $\GE[.]$ on the completion of $\lipT$ (respectively $\lip$) under the norm $\|.\|_{L^p_G(\Omega)}:=\GE[|.|^p]^{\frac{1}{p}},\ p\geq1$. We denote such a completion by $L^p_G(\Omega_T)$ (or resp. $L^p_G(\Omega)$).
		
		Note that we can equip the Skorohod space $\d0$ with the canonical filtration $\F_t:=\B(\Omega_t)$, where $\Omega_t:=\{\omega_{.\wedge t}\colon \omega\in\Omega\}$. Then using the procedure above we may in fact define the time-consistent conditional sublinear expectation $\GE[\xi|\F_t]$. Namely, w.l.o.g. we may assume that $t=t_i$ for some $i$ and then
		\[
			\GE[\xi|\F_{t_i}]:=\phi_{n-i}(X_{t_{0}},X_{t_1}-X_{t_0},\ldots,X_{t_i}-X_{t_{i-1}}).
		\]
		One can easily prove that such an operator is continuous w.r.t. the norm $\|.\|_1$ and might be extended to the whole space $L^1_G(\Omega)$. By construction above, it is clear that the conditional expectation satisfies the tower property, i.e. is dynamically consistent.
		\begin{defin}
			A stochastic process $(M_t)_{t\in[0,T]}$ is called a $G$-martingale if $M_t\in L^1_G(\Omega_t)$ for every $t\in[0,T]$ and for each $0\leq s\leq t\leq T$ one has
			\[
				M_s=\GE[M_t|\F_s].
			\]
			Moreover, a $G$-martingale $M$ is called symmetric, if $-M$ is also a $G$-martingale.
		\end{defin}

\subsection{Representation of $\GE[.]$ as an upper-expectation}
In \cite{Ren} it has been proven that the sublinear expectation associated with a $G$-\levy process can be represented as an upper-expectation, i.e. supremum of ordinary expectations over some family of probability measures. Moreover, in \cite{moj} we characterized that family of probability measures as laws of some \itolevy integrals under some conditions on the family of \levy measures. (see Section 3 in \cite{moj}). Throughout this paper we will work with the assumption which is stronger than Assumption 1 in \cite{moj}, namely we assume the following.
\begin{ass}\label{ass1}
Let a canonical process $X$ be a $G$-\levy process in $\R^d$ on a sublinear expectation space $(\d0, L^1_G(\Omega), \GE)$. Let $\u\subset \m(\r0)\times \R^d\times\R^{d\times d}$ be a set used in the \levykhintchine representation of $X$ \eqref{eq_integroPDE} satisfying \eqref{eq_property_of_u}. We assume that $\u$ is of the product form $\u=\v\times\{0\}\times\q$, hence the uncertainties connected with the jumps and the volatility are unrelated and there is no drift uncertainty. Moreover, assume that $X$ has finite activity as in Remark 6 in\cite{moj}, i.e.
\[
	\lambda:=\sup_{v\in\v}\, v(\r0)<\infty.
\]
\end{ass}
\begin{rem}
 Let $\G_{\B}$ denote the set of all Borel function $g\colon\R^d\to \R^d$ such that $g(0)=0$. Note that under the finite activity assumption it is easy to construct a measure $\mu\in \m(\R^d)$ such that
\[
	\int_{\r0}|z|\mu(dz)<\infty\quad \textrm{and}\quad \mu(\{0\})=0
\]
and for all $v\in\v$ there exists a function $g_v\in\G_{\B}$ satisfying the following condition
\[
	v(B)=\mu(g_v^{-1}(B))\quad \forall B\in\B(\r0).
\]
One of the possible constructions of such a measure might be obtained via the Knothe-Rosenblatt rearrangement and we can even take $\mu$ as a Lebesgue measure restricted to a hypercube $[0,\lambda^{1/n}]^n$ (see for example \cite{Villiani}, p.8-9).

Consequently, we can re-parametrize set $\u$ in the \levykhintchine formula as follows
	\[
		\tilde \u:=\{(g_v,0,q)\in \G_{\B}\times \R^d\times\R^{d\times d}\colon (v,p,q)\in\u\}.
	\]
\end{rem}
Apart from the finite activity assumption, we will also require that the operator $G_X$ associated with $X$ is non-degenerate in the following sense
\begin{ass}\label{ass_non-degeneracy}
Let $\u=\v\times\{0\}\times\q$ be a set used in the \levykhintchine representation of $X$ \eqref{eq_integroPDE} satisfying \eqref{eq_property_of_u}. We will assume that there exists a finite measure $\pi\in \m(\r0)$ and constants $0<\underline c\leq\bar c<\infty$ s.t. all measures $v\in\v$ and all $B\in\b(\r0)$ one has
\[
	\underline{c}\,\pi(B)\leq v(B) \leq \bar c\,\pi(B).
\]
As a consequence, all $v\in \v$ are equivalent measures to $\pi$ with Radon-Nikodym densities $f_v$ bounded from below and above by $\underline c$ and $\bar c$ on the support of $\pi$.

Let $\s_d$ be a set of all symmetric $d\times d$ matrices. We assume that there exists also a constant $\underline{\sigma}^2>0$ s.t. for all $A,B\in\s_d,\ A\geq B$ one has
\[
	\sup_{Q\in\q}\frac{1}{2}\tr[AQQ^T]-\sup_{Q\in\q}\frac{1}{2}\tr[AQQ^T]\geq \underline{\sigma}^2\tr[A-B].
\]
\end{ass} 
The first part of this assumption is necessary to establish a priori estimates in Section \ref{sec_apriori}, whereas the second part is crucial for having smooth solutions to IPDE \eqref{eq_integroPDE}.

Let $(\tilde \Omega,\G,\P_0)$ be a probability space carrying a Brownian motion $W$ and a \levy process with a \levy triplet $(0,0,\mu)$, which is independent of $W$. Let $N(dt,dz)$ be a Poisson random measure associated with that \levy process. Define
$N_t=\int_{\r0}xN(t,dx)$, which is finite $\P_0$-a.s. as we assume that $\mu$ integrates $|x|$. We also define the filtration generated by $W$ and $N$:
\begin{align*}
	\G_t:=&\sigma\{W_s,\ N_s\colon 0\leq s\leq t\}\vee\n;\ \n:=\{A\in\tilde \Omega\colon \P_0(A)=0\};\ \mathbb G:=(\G_t)_{t\geq0}.
\end{align*}
\begin{tw}[Theorem 7-9 and Corollary 10 in \cite{moj}]\label{tw_rep_sub_lin}
		Introduce a set of integrands $\a_{t,T}^{\u}$, $0 \leq t<T$, associated with $\u$ as a set of all processes $\theta=(\theta^d,0, \theta^{2,c})$ defined on $]t,T]$ satisfying the following properties:
	\begin{enumerate}
		\item $\theta^{c}$ is $\mathbb G$-adapted process and $\theta^d$ is $\mathbb G$-predictable random field on $]t,T]\times \R^d$.
		\item For $\P_0$-a.a. $\omega\in\tilde \Omega$ and a.e. $s\in]t,T]$ we have that 
		$(\theta^d(s,.)(\omega),0, \theta^{c}_s(\omega))\in \tilde\u$.
		\item $\theta$ satisfies the following integrability condition
		\[
		\E^{\P_0}\left[\int_t^T\left[|\theta^{c}_s|^2+\int_{\r0}|\theta^{d}(s,z)|\mu(dz)\right]ds\right]<\infty.
		\]
	\end{enumerate}
	For $\theta\in \a_{0,\infty}^{\u}$ denote the following \levy -\ito integral as
	\[
		B^{t,\theta}_T=\int_{t}^T\, \theta^{c}_sdW_s+ \int_{]t,T]}\int_{\r0}\, \theta^{d}(s,z)N(ds,dz).
	\]
		Lastly, for a fixed $\phi\in \cliprd$ and fixed $T>0$ define for each $(t,x)\in[0,T]\times\R^d$
	\begin{align*}
		u(t,x)&:=\sup_{\theta\in\a_{t,T}^{\u}}\E^{\P_0} [\phi(x+B^{t,\theta}_T)].
	\end{align*}
	Then under Assumption \ref{ass1} $u$ is the viscosity solution of the following integro-PDE
	\begin{equation}\label{eq_IPDE_backwards}
			\partial_tu(t,x)+G_X[u(t,x+.)-u(t,x)]=0
	\end{equation}
	with the terminal condition $u(T,x)=\phi(x)$. Moreover, for every $\xi\in L^1_G(\Omega)$ we can represent the sublinear expectation in the following way	
		\[
		\GE[\xi]=\sup_{\theta\in \a^{\u}_{0,\infty}}\, \E^{\P^{\theta}}[\xi],
	\]
	where  $\P^{\theta}:=\P_0\circ (B{.}^{0,\theta})^{-1},\ \theta\in\a^{\u}_{0,\infty}$. We will introduce also the following notation $\mathfrak{P}:=\{P^{\theta}\colon \theta \in \a^{\u}_{0,\infty}\}$.
\end{tw}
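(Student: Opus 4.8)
The plan is to recognise $G_X$ as a Hamilton--Jacobi--Bellman operator: since $\u=\v\times\{0\}\times\q$ and, after the re-parametrization, $\tilde\u=\{(g_v,0,Q)\}$, the operator $G_X[\phi(x+\cdot)-\phi(x)]$ is a supremum over the control set $\tilde\u$, and the quantity $u(t,x)=\sup_{\theta\in\a^{\u}_{t,T}}\E^{\P_0}[\phi(x+B^{t,\theta}_T)]$ is exactly the value function of the stochastic control problem in which one steers the volatility through $\theta^{c}_s\in\q$ and the jump sizes through $\theta^{d}(s,\cdot)=g_v$ for some $v\in\v$. I would then carry out three steps: (i) show that this value function is a viscosity solution of the backward IPDE \eqref{eq_IPDE_backwards}; (ii) identify it, by uniqueness, with the time reversal of the solution that defines $\GE$; and (iii) upgrade the resulting equality of one-time marginals to the full representation of $\GE[\cdot]$ on $L^1_G(\Omega)$.

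For step (i) I would first establish the dynamic programming principle: for every $t\leq r\leq T$,
\[
	u(t,x)=\sup_{\theta\in\a^{\u}_{t,T}}\E^{\P_0}\bigl[u\bigl(r,x+B^{t,\theta}_r\bigr)\bigr].
\]
This rests on the flow property of the driving \levy -\ito SDE together with a measurable selection argument allowing admissible controls to be concatenated at time $r$. Granting the DPP, the standard viscosity machinery applies: testing $u$ from above and below with $C^3_b$ functions and applying the \ito formula for \levy -\ito integrals from \cite{moj} to $B^{t,\theta}$ produces the infinitesimal generator
\[
	\sup_{(g_v,0,Q)\in\tilde\u}\Bigl\{\int_{\r0}[\phi(x+g_v(z))-\phi(x)]\mu(dz)+\tfrac12\tr[D^2\phi(x)QQ^T]\Bigr\},
\]
and the re-parametrization $v(B)=\mu(g_v^{-1}(B))$ turns the first integral into $\int_{\r0}[\phi(x+z)-\phi(x)]v(dz)$, so the generator coincides with $G_X[\phi(x+\cdot)-\phi(x)]$. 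Hence $u$ solves \eqref{eq_IPDE_backwards} with $u(T,\cdot)=\phi$.

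For step (ii), by Theorem 36 of \cite{Peng_levy} the function $(t,x)\mapsto\GE[\phi(x+X_t)]$ is the unique viscosity solution of the forward IPDE \eqref{eq_integroPDE} with datum $\phi$; its time reversal $w(t,x):=\GE[\phi(x+X_{T-t})]$ is then the unique viscosity solution of \eqref{eq_IPDE_backwards} with terminal datum $\phi$. By the comparison principle underlying that uniqueness, $u\equiv w$, and in particular, setting $t=x=0$,
\[
	u(0,0)=\GE[\phi(X_T)]=\sup_{\theta\in\a^{\u}_{0,T}}\E^{\P_0}[\phi(B^{0,\theta}_T)]=\sup_{\theta}\E^{\P^{\theta}}[\phi(X_T)],
\]
which is the desired representation for one-time random variables.

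Finally, for step (iii) I would argue for a general $\xi=\phi(X_{t_1},\ldots,X_{t_n}-X_{t_{n-1}})\in\lip$ by backward induction, matching the iterated definition of $\GE$ recalled in Section \ref{sec_preliminaries} with a concatenation of the control representation performed successively at $t_{n-1},\ldots,t_1$. The structural fact that makes this work is that the family $\mathfrak{P}=\{\P^{\theta}\colon\theta\in\a^{\u}_{0,\infty}\}$ is stable under pasting: two admissible controls may be spliced at a fixed time into a third admissible control, so that $\xi\mapsto\sup_{\theta}\E^{\P^{\theta}}[\xi]$ enjoys the same tower property as the iterated $\GE$ and the two functionals agree on $\lip$. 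Extension to all $\xi\in L^1_G(\Omega)$ then follows from the density of $\lip$ and the fact that both functionals are $1$-Lipschitz for $\|\cdot\|_{L^1_G(\Omega)}$. The hard part will be the combination of the dynamic programming principle and the pasting stability of $\mathfrak{P}$: both require measurable selection and concatenation of controls, which the presence of the Poisson-random-measure control makes substantially more delicate than in the pure $G$-Brownian setting.
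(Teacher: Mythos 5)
This theorem is not proved in the paper at all --- it is imported verbatim from \cite{moj} (Theorems 7--9 and Corollary 10), and your three-step plan (dynamic programming plus viscosity machinery to show the value function solves \eqref{eq_IPDE_backwards}, identification with $\GE$ via uniqueness of viscosity solutions, then backward induction over the time grid using pasting of controls and an $L^1_G$-density argument) is essentially the argument given in that reference. So your proposal is correct and follows the same route as the cited proof; the points you single out as delicate --- measurable selection and concatenation of the jump controls --- are precisely the technical core handled there.
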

We also have a similar characterization for the conditional sublinear expectation.
\begin{prop}\label{prop_rep_cond_exp}
Under Assumption \ref{ass1} we have for every $\xi\in L^1_G(\Omega)$ that for every $\P\in\mathfrak{P}$
\[
\GE[\xi|\F_t]:=\sideset{}{^{\P}}\esup_{\P'\in\mathfrak{P}(t,\P)}\, \E^{\P'}[\xi|\F_t],\ \P-a.s.
\]
where $\mathfrak{P}(t,P):=\{\P'\in\mathfrak{P}\colon \P=\P'\ \textrm{on}\ \F_t\}$.
\end{prop}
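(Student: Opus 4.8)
The plan is to prove the two inequalities between $\GE[\xi|\F_t]$ and the $\P$-essential supremum
$\mathcal{E}_t(\xi):=\esup^{\P}_{\P'\in\mathfrak{P}(t,\P)}\E^{\P'}[\xi|\F_t]$ separately, first establishing the identity on the dense subspace $\lip$ and then extending to all of $L^1_G(\Omega)$ by continuity. Throughout I would use two structural facts built into the iterated construction of the conditional sublinear expectation: the tower property, and the locality relation $\GE[\I_A\xi|\F_t]=\I_A\GE[\xi|\F_t]$ for $A\in\F_t$. The representation of $\GE[\cdot]$ as $\sup_{\P'\in\mathfrak{P}}\E^{\P'}[\cdot]$ from Theorem \ref{tw_rep_sub_lin} and the concatenation (pasting) stability of the integrand class $\a^{\u}_{0,\infty}$ are the other main tools.

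For the easy inequality $\mathcal{E}_t(\xi)\leq\GE[\xi|\F_t]$ $\P$-a.s., fix $\P'\in\mathfrak{P}(t,\P)$ and $A\in\F_t$. By the locality relation and the tower property one has $\GE[\I_A(\xi-\GE[\xi|\F_t])]=\GE[\GE[\I_A\xi|\F_t]-\I_A\GE[\xi|\F_t]]=0$, so by Theorem \ref{tw_rep_sub_lin} the bound $\E^{\P'}[\I_A(\xi-\GE[\xi|\F_t])]\leq\GE[\I_A(\xi-\GE[\xi|\F_t])]=0$ holds for every $\P'\in\mathfrak{P}$. Since $A$ and $\GE[\xi|\F_t]$ are $\F_t$-measurable and $\P'=\P$ on $\F_t$, the left-hand side equals $\E^{\P}[\I_A(\E^{\P'}[\xi|\F_t]-\GE[\xi|\F_t])]$. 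As $A\in\F_t$ is arbitrary, this forces $\E^{\P'}[\xi|\F_t]\leq\GE[\xi|\F_t]$ $\P$-a.s., and taking the essential supremum over $\P'\in\mathfrak{P}(t,\P)$ gives the claim.

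For the reverse inequality $\GE[\xi|\F_t]\leq\mathcal{E}_t(\xi)$ $\P$-a.s., I would first take $\xi\in\lip$, for which $\GE[\xi|\F_t]$ is, by the iterated construction together with stationarity and independence of the increments of $X$, an explicit function of the path up to time $t$; by Theorem \ref{tw_rep_sub_lin} applied on $]t,T]$ it coincides with a supremum over the future integrands $\a^{\u}_{t,T}$. The strategy is then: given $\varepsilon>0$, produce $\P'\in\mathfrak{P}(t,\P)$ with $\E^{\P'}[\xi|\F_t]\geq\GE[\xi|\F_t]-\varepsilon$ $\P$-a.s., whence $\mathcal{E}_t(\xi)\geq\GE[\xi|\F_t]-\varepsilon$ and, letting $\varepsilon\downarrow0$, the desired bound. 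To build $\P'$ I would take an integrand $\theta^{\P}$ generating $\P$, measurably select an $\varepsilon$-optimal future integrand $\theta'$ for the supremum defining $\GE[\xi|\F_t]$, and paste them at $t$: the concatenation agrees with $\theta^{\P}$ on $[0,t]$, so the resulting $\P'\in\mathfrak{P}$ satisfies $\P'=\P$ on $\F_t$, while on $]t,T]$ it realizes the near-optimal conditional value.

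The hard part will be this pasting together with a measurable-selection argument. One must verify that $\a^{\u}_{0,\infty}$ is stable under concatenation at the deterministic time $t$: here the product structure $\u=\v\times\{0\}\times\q$ and the purely pathwise nature of the constraint $(\theta^d(s,\cdot),0,\theta^c_s)\in\tilde{\u}$ are used, since pasting preserves adaptedness, predictability and the integrability bound, and the $\varepsilon$-optimal $\theta'$ must be chosen $\F_t$-measurably in the path up to $t$. The same pasting also yields the lattice property needed to handle the essential supremum: for $\P_1,\P_2\in\mathfrak{P}(t,\P)$ the set $A=\{\E^{\P_1}[\xi|\F_t]\geq\E^{\P_2}[\xi|\F_t]\}$ lies in $\F_t$, and pasting $\P_1$ over $A$ with $\P_2$ over $A^c$ on top of the common past produces $\P_3\in\mathfrak{P}(t,\P)$ with $\E^{\P_3}[\xi|\F_t]=\max(\E^{\P_1}[\xi|\F_t],\E^{\P_2}[\xi|\F_t])$; this upward directedness guarantees both the existence of $\mathcal{E}_t(\xi)$ and an increasing approximating sequence. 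Finally, both $\xi\mapsto\GE[\xi|\F_t]$ and $\xi\mapsto\mathcal{E}_t(\xi)$ are $\|\cdot\|_{L^1_G(\Omega)}$-contractions, so the identity extends from $\lip$ to all $\xi\in L^1_G(\Omega)$ by density.
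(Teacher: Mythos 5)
Your overall architecture --- proving the two inequalities separately, realizing the hard one ($\GE[\xi|\F_t]\leq\esup^{\P}_{\P'\in\mathfrak{P}(t,\P)}\E^{\P'}[\xi|\F_t]$) by pasting an $\varepsilon$-optimal, measurably selected future integrand onto an integrand generating $\P$, using the same pasting to get upward directedness of the family $\{\E^{\P'}[\xi|\F_t]\}$, and extending from $\lipT$ to $L^1_G(\Omega)$ by density --- is exactly the argument the paper invokes: its ``proof'' is a one-line citation of Proposition 3.4 in \cite{Soner_mart_rep}, and that proof consists of precisely these steps (explicit form of $\GE[\xi|\F_t]$ for cylinder $\xi$, concatenation of controls at time $t$, measurable selection via a countable partition exploiting the uniform Lipschitz dependence on the past increments, then density). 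So on the hard direction, the directedness and the extension you are on the intended track, with the genuinely laborious parts (stability of $\a^{\u}_{0,\infty}$ under concatenation in the strong formulation, and the selection argument) correctly identified even if deferred.

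The genuine gap is in your ``easy'' inequality. You treat the locality relation $\GE[\I_A\xi|\F_t]=\I_A\GE[\xi|\F_t]$, $A\in\F_t$, as a structural fact of the iterated construction, and you then apply Theorem \ref{tw_rep_sub_lin} to the random variable $\I_A(\xi-\GE[\xi|\F_t])$. Neither step is available in this framework: for a general Borel set $A\in\F_t$ the indicator $\I_A$ is not quasi-continuous, so $\I_A\xi$ does not belong to $L^1_G(\Omega)$; hence $\GE[\I_A\xi|\F_t]$ is simply undefined (the conditional expectation is built on $\lip$ by the iterated PDE procedure and extended by $\|\cdot\|_{L^1_G}$-continuity, which never reaches indicator multiples), and the representation theorem, stated for $\xi\in L^1_G(\Omega)$, cannot be applied to $\I_A(\xi-\GE[\xi|\F_t])$. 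This matters because you truly need arbitrary Borel $A$: the set on which the claimed inequality could fail is $A=\{\E^{\P'}[\xi|\F_t]>\GE[\xi|\F_t]\}$, a purely $\P$-measure-theoretic object with no quasi-sure regularity, so no approximation by nice sets rescues the computation. In fact, locality with arbitrary $\F_t$-indicators is essentially equivalent to the proposition being proved; in the literature it is \emph{deduced from} the essential-supremum representation, not used to prove it. Two standard repairs: (i) argue as in \cite{Soner_mart_rep} in the strong formulation: for $\P'=\P^{\theta}\in\mathfrak{P}(t,\P)$, the dynamic programming principle gives the pointwise bound $\E^{\P_0}[\xi(B^{0,\theta})\,|\,\G_t]\leq \GE[\xi|\F_t](B^{0,\theta})$ $\P_0$-a.s.\ (running the fixed control $\theta$ after $t$ cannot beat the value function), and conditioning both sides on $\sigma(B^{0,\theta}_s,\,s\leq t)$ yields $\E^{\P'}[\xi|\F_t]\leq\GE[\xi|\F_t]$ $\P$-a.s.\ without ever multiplying by indicators inside $\GE$; or (ii) run your own pasting machinery first: with $\eta:=\xi-\GE[\xi|\F_t]\in L^1_G(\Omega)$ one has $\GE[\eta]=0$ by the tower property and translation invariance (both legitimate, since $\GE[\xi|\F_t]\in L^1_G(\Omega_t)$), and then for $A\in\F_t$ pasting $\P'$ on $A$ with an $\varepsilon$-optimizer on $A^c$ gives $\E^{\P'}[\I_A\eta]\leq\GE[\eta]+\varepsilon=\varepsilon$, which is the conditional bound --- but note this makes the easy direction a corollary of the hard one, so it must come second.
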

The proof uses exactly the same arguments as in $G$-Brownian motion case (see Proposition 3.4 in \cite{Soner_mart_rep}).
\begin{defin}
We define the capacity $c$ associated with $\GE$ by putting
\[
	c(A):=\sup_{\P\in\mathfrak{P}}\P(A),\quad A\in\B(\Omega).
\]
We will say that a set $A\in\B(\Omega)$ is polar if $c(A)=0$. We say that a property holds quasi-surely (q.s.) if it holds outside a polar set. 
\end{defin}
\begin{rem}
 Note that under Assumption \ref{ass1} the continuous part of a $G$-\levy process $X^c$ is a $G$-Brownian motion (or to be more exact: $G^c$-Brownian motion, we will however drop the distinction as it doesn't lead to any confusion). We will denote that $G$-Brownian notion as $B$ and its quadratic variation is denoted as $\qB$.
 
 Moreover, the finite activity assumption allows us to define the  a Poisson random measure $L(ds,dz)$ associated with the $G$-\levy process $X$ by putting 
 \[
 	L(]s,t],A)=\sum_{s<u\leq t} \I_{A}(\Delta X_u),\ q.s.
 \]
 for any $0<s<t<\infty$ and $A\in\B(\r0)$. The random measure is well-defined and may be used to define the pathwise integral. See \cite{moj}, Section 4 and 5 for details.
\end{rem}

\subsection{Notation for different spaces and integrals}

Throughout this paper we will use different spaces of both random variables and stochastic processes. In this subsection we will define these spaces.
\begin{itemize}
	\item As already mentioned, $L^p_G(\Omega_T),\ p\geq1$ space is a closure of $Lip(\Omega_T)$ under the norm $\|.\|_{L^p_G(\Omega_T)}$.
	\item $\L^p_G(\Omega_T),\ p\geq1$ is the closure of $Lip(\Omega_T)$ under the norm \[
	\|.\|_{\L^p_G(\Omega_T)}:=\GE[\sup_{t\in[0,T]}\GE[|.|\, |\F_t]].\]
	We will also use the following definition: the process $Z$ taking values in a metric space $(\mathcal{X},d)$ is an elementary process, if it has the form
\[
	Z_t=\sum_{n=1}^N\, \phi_n(X_{t_1},\ldots,X_{t_n})\I_{]t_{n-1},t_n]},
\]
where $0\leq t_1<\ldots t_N<\infty$ and $\phi_n\colon\R^{d\times n}\to \mathcal{X}$ is Lipschitz continuous and bounded.

	\item For $ \mathcal{X}=\R,\ \R^d,\ \s_d$ we define the family of elementary processes taking value in $\mathcal X$ as the set of all random processes of the form $	Z_t=\sum_{n=1}^N\, \phi_n(X_{t_1},\ldots,X_{t_n})\I_{]t_{n-1},t_n]}$, 
where $0\leq t_1<\ldots t_N\leq T$ and $\phi_n\colon\R^{d\times n}\to \mathcal{X}$ is Lipschitz continuous and bounded. Let $\H_G^p(0,T; \mathcal{X})$ denote the completion of all $\mathcal X$-valued elementary
	 under the norm
	\[
		\|Z\|_{\H_G^p(0,T; \mathcal{X})}:=\GE\left[\int_0^T|Z_s|^p ds\right]^{1/p}.
	\]
	For a process $Z\in\H_G^p(0,T; \R^d)$ one can define the stochastic integral w.r.t. $G$-Brownian motion $B$ denoted by $\int_0^t Z_s\cdot dB_s$. Similarly for $Z\in\H_G^p(0,T; \R)$ and $\H_G^p(0,T; \R^d)$ one can define  integrals $\int_0^t Z_s ds$ and $\int_0^t Z_s\colon d\qB_s$ (respectively).\footnote{We use the the following notation: $x\cdot y:=x^Ty,\ x,y\in\R^d$ and $A\colon B:=\tr[AB],\ A,B\in\s_d$.} All integrals are continuous operators between spaces $\H_G^p(0,T; \mathcal X)$ and $L^p_G(\Omega)$. See \cite{Peng_skrypt} for details. 
	\item 	Let $\H_G^S([0,T]\times\r0)$ be a space  of all elementary random fields on $[0,T]\times \r0$ of the form
	\begin{equation}\label{eq_K_representation}
			K(u,z)(\omega)=\sum_{k=1}^{n-1}\,\sum_{l=1}^m\, F_{k,l}(\omega)\,\I_{]t_{k},t_{k+1}]}(u)\,\psi_l(z),\ n,m\in\N,
	\end{equation}
	where $0\leq t_1<\ldots<t_n\leq T$ is the partition of $[0,T]$, $\{\psi_l\}_{l=1}^m\subset C_{b,lip}(\R^d)$ are functions with disjoint supports s.t. $\psi_l(0)=0$ and $F_{k,l}=\phi_{k,l}(X_{t_1},\ldots, X_{t_k}-X_{t_{k-1}})$, $\phi_{k,l}\in C_{b,lip}(\R^{d\times k})$.
	We introduce two norms on this space
	\[
		\|K\|_{\H_G^p([0,T]\times\r0)}:=\GE\left[\int_0^T\,\sup_{v\in\v}\,\int_{\r0}|K(u,z)|^pv(dz)du\right]^{1/p},\quad p=1,2
	\]
	and define the completion of $\H_G^S([0,T]\times\r0)$ under these norms as $\H_G^p([0,T]\times\r0)$. For a random field $K\in \H_G^p([0,T]\times\r0)$ we can define a pathwise integral w.r.t. a Poisson jump measure $L(ds,dz)$ as
	\[
		\int_s^t\int_{\r0}K(u,z)L(du,dz):=\sum_{s<u\leq t} K(u,\Delta X_u),\ q.s.
	\]
	The integral is continuous as an operator from space $\H_G^p([0,T]\times\r0)$ to $L^p_G(\Omega)$. For details see Section 5 in \cite{moj}.
\item $\s^p_G(0,T)\, p\geq 1$ is a space of all stochastic process $Z$ such that for each $t\in[0,T]$ $Z_t\in L^p(\Omega_t)$ and $Z$ has finite $\|.\|_{\s^p_G(0,T)}$ norm defined as
\[
	\|Z\|_{\s^p_G(0,T)}:=\GE[\sup_{t\in[0,T]}|Z_t|^p]^{1/p}.
\]
\end{itemize}

\section{Regularity of the solution of integro-PDE}\label{sec_regularity}

In this section we will prove that the integro-partial differential equation \eqref{eq_IPDE_backwards} has not only a viscosity solution, but also a classical solution. We will restrict ourselves to the case, where the terminal condition is smooth and of finite support. Even tough this might be restrictive, it will be sufficient to our purposes. 

We will use in this section the result by Mikulevicius and Pragarauskas about the existence of the classical solution of the class of integro-PDEs on the cylinder $]0,T[\times D$ where $D$ is a bounded domain of $\R^d$. We will extend that result to the the unbounded case by using the estimate of the $C^{2+\alpha}$ norm of the solution and then prove that our equation satisfies all the conditions of the regularity theorem by Mikulevicius and Pragarauskas.

We will introduce now some standard notation used by Mikulevicius and Pragarauskas.  Fix a domain $D$ in $\R^d$ and define a cylinder $Q_s(D):=]s,T[\times D$, $s\in [0,T[$, $Q(D):=Q_0(D)$. Let $\partial'Q:=(]0,T[\times \partial D)\cup ({T}\times D)$. For the multiindex $l=(l_0,l_1,\ldots,l_d)\in \N^{d+1}$ of the parabolic order $|l|=2l_0+l_1+\ldots+l_d$ we shall denote the partial derivative 
\[
	\partial^l u(t,x)=\frac{\partial^{l_0+l_1+\ldots+l_d}}{\partial t^{l_0}\partial x_1^{l_1}\ldots\partial x_d^{l_d}}u(t,x).
\]
of the function $u$ defined on $Q(D)$. The space $C(\bar{Q_s}(D))$ is defined as usual, i.e. space of all continuous functions on $Q_s(D)$ such that the supremum norm is finite. The space $C^{n+\alpha}(\bar{Q_s}(D))$, $n\in\N$, $\alpha\in [0,1[$ is defined as the space of all functions $u$ continuous on $\bar{Q}_s(D)$ together with all their partial derivatives $\partial^l u$, $|l|\leq n$ and having the finite norm 
\[
	\|u\|_{n+\alpha,Q_s(D)}:=\max_{|l|\leq n} \sup_{(t,x)\in[s,T]\times\bar{D}} |\partial^l u(t,x)|+M_{n+\alpha,Q_s(D)},
\]
where
\[
	M_{n+\alpha,Q_s(D)}:=\sup_{t\in]s,T[ }\,M_{n+\alpha}\left[u(t,.);D\right]+\sup_{|l|\leq n,\ x\in D}M_{(n+\alpha-|l|)/2} \left[\partial^l u(.,x); ]s,T[\right]
\]
and
\[
M_{n+\alpha}\left[f;\Omega\right]:= \sup_{|l|=n,\ x,y\in\Omega,\ x\neq y}\frac{|\partial^l f(x)-\partial^l f(y)|}{|x-y|^{\alpha}},\textrm{ where }f\colon\Omega\to\R. 
\]

Introduce now the operator $G$ defining integro-PDE. Let $G=G(A,r,u,w,t,x)$ be a function defined on $\s_{d}\times \R^d\times \R\times C^{2+\alpha}(\bar{Q}(D))\times \bar{Q}(D)$ (where $\s_d$ is a space of all symmetric $d\times d$ matrices) and taking values in $\R$. For brevity let $V:=\s_{d}\times \R^d\times \R$. We fix non-negative constants $\alpha,\nu\in ]0,1[,\ K,\ K_1,\, K_2,\, (K_{\epsilon})_{\epsilon\in[0,1[}$.

\begin{defin}
	Let  $\G (\alpha,\nu,K, K_1, (K_{\epsilon}), D)$ be the class of all functions $G\colon V\times C^{2+\alpha}(\bar{Q}(D))\times \bar{Q}(D)\to \R$ satisfying the following properties:
	\begin{enumerate}
		\item $G$ in convex w.r.t. $A$,
		\item for any $(A,r,u,w,t,x)\in V\times C^{2+\alpha}(\bar{Q}(D))\times \bar{Q}(D)$ and $\xi\in\R^d$ we have
		\[
			\nu|\xi|^2\leq G(A+\xi\xi^T,r,u,w,t,x)-G(A,r,u,w,t,x)\leq \nu^{-1}|\xi|^2,
		\]
		\item for any $A,A'\in \s_d$, $r,r'\in\R^d$, $u,u'\in\R$, $w\in C^{2+\alpha}(\bar{Q}(D))$ and $(t,x,)\in \bar{Q}(D)$ we have
		\[
			|G(A,r,u,w,t,x)-G(A',r',u',w,t,x)|\leq K \left[\sum_{i,j=1}^d |A_{ij}-A'_{ij}|+\sum_{i=1}^d |r_i-r'_i|+|u-u'|\right]
		\]
		\item for any $\epsilon\in]0,1[,\ t\in[0,T[,\ (v,w)\in V\times C^{2+\alpha}(\bar{Q}(D))$ we have
		\[
			\|G(v,w,.)\|_{\alpha,Q_t(D)}\leq K|v|+\epsilon \|w\|_{2+\alpha,Q_t(D)}+K_{\epsilon}\|w\|_{0,Q_t(D)}+K_1,
		\]
		\item if $w,w_n\in C^{2+\alpha}(\bar{Q}(D))$ , $\sup_{n} \|w_n\|_{2+\alpha,Q(D)}<\infty$, $w_n\to w$ in $C^2(\bar{Q}(D))$ as $n\to\infty$, then $G(v,w_n,t,x)\to G(v,w,t,x)$ for all $(v,t,x) \in V\times Q(D)$. 
	\end{enumerate}
	
	Moreover, we say that $G$ belongs to the class  $\G_1 (\alpha,\nu,K, K_1,K_2, (K_{\epsilon}),D)$, if $G$ belongs to the class $\G (\alpha,\nu,K, K_1, (K_{\epsilon}))$ and is infinitely differentiable w.r.t. $v$ and for any $v\in V$, $w,w'\in C^{2+\alpha}(\bar{Q}(D))$ and $t\in[0,T[$ one has
	\[
		|G(v,w,.)-G(v,w',.)|_{0,Q_t(D)}\leq K_2 |w-w'|_{0,Q_t(D)}.
	\]
\end{defin}

\begin{ass}\label{assumptions_iPDE}
	Fix $\alpha\in ]0,1[$, $\delta>0$, $K>0$. Let $D$ be bounded non-empty domain $D=\{x\in\R^d\colon \psi(x)>0\}$ with boundary $\partial D=\{\psi=0\}$, where the function $\psi\in C^{2+\alpha}(\R^d)$ is such 
	\[
		\|\psi\|_{C^{2+\alpha}(\R^d)}\leq K \quad \textrm{and}\quad \sup_{x\in\partial D} |\nabla \psi|\geq\delta.
	\]
	Let $G=G(A,r,u,w,t,x)$ be a function defined on $\s_{d}\times \R^d\times \R\times C^{2+\alpha}(\bar{Q}(D))\times \bar{Q}(D)$  and taking values in $\R$ such that $G(0,0,0,0,T,.)=0$ on $\partial D$. 
	
	Moreover, assume that $G$ belongs to the class $\G (\alpha,\nu,K, K_1, (K_{\epsilon}),D)$ for some non-negative constants $\nu\in ]0,1[,\ K,\ K_1,\, (K_{\epsilon})_{\epsilon\in[0,1[}$ and there exists a sequence $\{G_n\}_n$ such that $G_n$ are in the class $\G_1 (\alpha,\nu,K, K_1,K^n_2, (K_{\epsilon}),D)$ for some non-negative constant $K_2^n$, $n\in\N$, and $G_n(.,t,x)\to G(.,t,x)$ uniformly on bounded sets of $V\times C^{2+\alpha}(\bar{Q}(D))$ for any $(t,x)\in Q(D)$.
\end{ass}

\begin{tw}[Theorem 1 in \cite{classical_bound}]\label{tw_existence_classical}
	Under Assumption \ref{assumptions_iPDE} consider the problem
	\[
		\partial_t u(t,x)+G(D^2u(t,x),Du(t,x),u(t,x),u,t,x)=0\quad \textrm{in}\ Q(D),\quad u=0\quad \textrm{on}\ \partial'Q(D).
	\]
	Then this problem has the classical solution $u\in C^{2}(\bar{Q}(D))$ such that 
	\[
		\|u\|_{2,Q(D)}\leq NK_1
	\]
	and the constant $N$ depends only on $\alpha,\ d,\ \nu,\ \delta,\ K,\ (K_{\epsilon})$.
\end{tw}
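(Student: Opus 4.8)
The plan is to treat this as a fully nonlinear uniformly parabolic integro-PDE and to combine the Evans--Krylov--Schauder regularity theory for convex, uniformly elliptic operators with a fixed-point argument that handles the nonlocal dependence on $w$, all wrapped inside an approximation scheme justified by the hypotheses on $\{G_n\}$ in Assumption~\ref{assumptions_iPDE}. The crucial observation is that the operator depends on the unknown in two ways: locally through $(D^2u,Du,u)$ and nonlocally through the function slot $w=u$ (which carries the integral term $\int[u(t,x+z)-u(t,x)]v(dz)$). I would decouple these two dependences.

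First I would reduce to a smooth operator. By the approximation hypothesis, $G$ is the uniform-on-bounded-sets limit of operators $G_n\in\G_1(\alpha,\nu,K,K_1,K_2^n,(K_{\epsilon}),D)$, which are $C^{\infty}$ in $v=(A,r,u)$ and Lipschitz in $w$ with constant $K_2^n$. If I can solve each $G_n$-problem with a solution $u_n$ and, crucially, produce a $C^{2+\alpha}$ bound that is uniform in $n$ and depends only on $\alpha,d,\nu,\delta,K,(K_{\epsilon})$ and $K_1$ (but \emph{not} on $K_2^n$), then Arzel\`a--Ascoli gives a subsequence converging in $C^2(\bar Q(D))$, and property 5 (continuity of $G$ along $C^2$-convergent, $C^{2+\alpha}$-bounded sequences) lets me pass to the limit in the equation. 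The limit $u$ inherits the estimate and solves the $G$-problem. Thus the entire difficulty is concentrated in (i) solving each smooth problem and (ii) the uniform a priori estimate.

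For a fixed smooth $G_n$ I would freeze the nonlocal slot: given $w\in C^{2+\alpha}(\bar Q(D))$, solve the \emph{local} fully nonlinear parabolic Dirichlet problem $\partial_t u+G_n(D^2u,Du,u,w,t,x)=0$, $u=0$ on $\partial'Q(D)$. Here properties 1 and 2 (convexity in $A$ and two-sided ellipticity $\nu|\xi|^2\le G(A+\xi\xi^T,\cdot)-G(A,\cdot)\le\nu^{-1}|\xi|^2$) place the problem squarely in the scope of the Evans--Krylov theorem, which yields interior $C^{2+\alpha}$ estimates; the $C^{2+\alpha}$-regularity of $\partial D$ encoded by $\psi\in C^{2+\alpha}$ with $|\nabla\psi|\ge\delta$ on $\partial D$, together with barrier constructions, upgrades this to global boundary $C^{2+\alpha}$ estimates, and existence for the frozen problem follows by the method of continuity from linear Schauder theory. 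This defines a map $\Phi\colon w\mapsto u$, and I would obtain a fixed point via the Schauder (or Leray--Schauder) fixed-point theorem: the $C^{2+\alpha}$ estimate makes $\Phi$ map a ball of $C^{2+\beta}$ compactly into itself for $\beta<\alpha$, while the Lipschitz-in-$w$ property of $G_n$ gives continuity of $\Phi$. A fixed point is exactly a solution $u_n$ of the $G_n$-integro-PDE.

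The uniform a priori bound $\|u\|_{2,Q(D)}\le NK_1$ I would build in three layers. The sup-norm bound $\|u\|_{0}\le CK_1$ comes from the comparison/maximum principle using the structure condition (property 4) with its $K_1$ inhomogeneity and the zero boundary data. Control of the first derivatives and Krylov--Safonov H\"older control of $D^2u$ come from uniform parabolicity, and the sharp $C^{2+\alpha}$ bound from Evans--Krylov via convexity, with every constant tracked to depend only on $\alpha,d,\nu,\delta,K,(K_{\epsilon})$; the small-$\epsilon$ interpolation term $\epsilon\|w\|_{2+\alpha}$ in property 4 is absorbed into the left side through a standard interpolation inequality, which is precisely what makes the final constant independent of $K_2^n$. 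The main obstacle is exactly this uniform boundary $C^{2+\alpha}$ estimate for the fully nonlinear convex operator on the $C^{2+\alpha}$-parametrized domain $D$: pushing the Evans--Krylov estimate up to the boundary rather than only into the interior, and ensuring its constant neither degenerates as $G_n\to G$ nor blows up as $K_2^n\to\infty$, is the delicate technical heart of the argument, and it is what forces the careful quantitative bookkeeping behind the operator classes $\G$ and $\G_1$.
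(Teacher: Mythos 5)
This theorem is not proved in the paper at all: it is quoted verbatim as Theorem 1 of Mikulevicius and Pragarauskas \cite{classical_bound} and used as a black box (the paper's own contribution begins with Corollary \ref{cor_existence_classical_Rd}, which extends the cited bounded-domain result to $\R^d$ by a diagonal compactness argument). So there is no proof in the paper to compare yours against; what you have written is an attempt to reprove the cited result, and it has to be judged on its own merits.

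As such a reconstruction, your outline follows the expected strategy (smooth approximants $G_n$, estimates uniform in $n$, Evans--Krylov/Schauder theory, a fixed point in the nonlocal slot), but it contains concrete gaps. First, the Schauder fixed-point step does not close. For the frozen problem, the only control property 4 of the class $\G$ gives on the data is $\|G_n(v,w,\cdot)\|_{\alpha,Q_t(D)}\leq K|v|+\epsilon\|w\|_{2+\alpha,Q_t(D)}+K_{\epsilon}\|w\|_{0,Q_t(D)}+K_1$, so both the sup-norm bound (via the maximum principle) and the $C^{2+\alpha}$ bound for $u=\Phi(w)$ inherit the term $K_{\epsilon}\|w\|_{0}$, in which there is no smallness whatsoever ($K_{\epsilon}$ typically blows up as $\epsilon\downarrow0$). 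On a set $\{\|w\|_{2+\alpha}\leq R,\ \|w\|_{0}\leq R_0\}$ one only gets $\|\Phi(w)\|_{2+\alpha}\leq C\left(K_1+\epsilon R+K_{\epsilon}R_0\right)$, and no choice of $R$, $R_0$, $\epsilon$ makes $\Phi$ a self-map: the interpolation you invoke absorbs $\epsilon\|w\|_{2+\alpha}$ but is powerless against $K_{\epsilon}\|w\|_{0}$. A sup-norm bound independent of $w$ requires a sign or monotonicity structure of $G$ in the nonlocal argument (true for the concrete operator, where at an interior maximum $\int_{\r0}[u(t,x+z)-u(t,x)]v(dz)\leq0$, but invisible in the abstract class $\G$); alternatively the a priori estimates must be run on solutions of the full equation $w=u$ combined with a continuation or degree argument rather than Schauder's theorem applied to the frozen map. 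Second, you never use the coordination condition $G(0,0,0,0,T,\cdot)=0$ on $\partial D$ from Assumption \ref{assumptions_iPDE}. That condition is precisely the corner compatibility between the terminal data and the lateral data at $\{T\}\times\partial D$; without it solutions are generically not $C^{2}$ up to $\partial'Q(D)$, so any proof of the claimed global regularity must invoke it somewhere. Finally, the step you yourself flag as the ``delicate heart'' --- the boundary $C^{2+\alpha}$ estimate for convex fully nonlinear parabolic operators with constants depending only on $\alpha$, $d$, $\nu$, $\delta$, $K$, $(K_{\epsilon})$ --- is asserted rather than proved, and that assertion is essentially the content of the theorem being cited.
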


We stress that the bound depends on the domain $D$ only via parameters $\delta$ and $K$. Moreover, for any $r>0$ the open ball $D_r\subset \R^d$ centred at $0$ and with radius $r$ satisfies the assumptions in Assumption \ref{assumptions_iPDE} for any $\delta>0$ and $K>0$. We will use this fact to extend the existence result to the case $D=\R^d$ via solving the equation on domains $D_n$ and taking a convergent subsequence. Thus let us introduce the definition
\begin{defin}\label{def_class_tilde_G}
	We say that function $G\colon\s_{d}\times \R^d\times \R\times C^2(\bar{Q}(\R^d))\times \bar{Q}(\R^d)\to\R$ belongs to the class $\tilde{\G} (\alpha,\nu,K, K_1, (K_{\epsilon}),\R^d)$ if the following conditions are satisfied:
	\begin{enumerate}
		\item for each open ball $D_n$ with radius $n$ centred at $0$ there exists a function $G_n$ such that  
	$G_n\in \G (\alpha,\nu,K, K_1, (K_{\epsilon}),D_n)$ and $G_n$ satisfies the following coordination condition: $G_n(0,0,0,0,T,.)=0$ on $\partial D_n$;
		\item for any $A_n, A\in\s_d$, $r_n,r\in \R^d$ $u_n,u\in\R$ and $w_n\in C^{2+\alpha}(\bar Q(D_n))$, $w\in C^2(\bar{Q}(\R^d))$ such that 
		\begin{itemize}
		\item $A_n\to A$, $r_n\to r$, $u_n\to u$ as $n \to\infty$, 
		\item $\sup_{n\geq m} \|w_n|_{\bar{Q}(D_m)}\|_{2+\alpha,Q(D_m)}<\infty$ for all $m\in\N$, 
		\item for each $m\geq1$ we have  $w_n|_{\bar Q(D_m)}\to w|_{\bar Q(D_m)}$ in $C^2(\bar Q(D_m))$ as $n\to\infty,\ n\geq m$,
		\end{itemize} 
		then $G_n(A_n,r_n,w_n,t,x)\to G(A,r,w,t,x)$ as $n\to\infty$ eventually for all $(t,x) \in Q(\R^d)$. 
	\end{enumerate}	
	Moreover we say  that $G$ belongs to the class  $\tilde{\G}_1 (\alpha,\nu,K, K_1,(K_2^n), (K_{\epsilon}),\R^d)$, if $G$ belongs to the class $\tilde{\G} (\alpha,\nu,K, K_1, (K_{\epsilon}),\R^d)$, each $G_n$ from point 1 belongs to ${\G}_1 (\alpha,\nu,K, K_1,K_2^n, (K_{\epsilon}),D_n)$. 
\end{defin}

\begin{cor}\label{cor_existence_classical_Rd}
	Let $G$ be in the class $\tilde{\G}_1 (\alpha,\nu,K, K_1, (K_2^n), (K_{\epsilon}),\R^d)$
	for some non-negative constants $\nu\in ]0,1[,\ K,\ K_1,\, (K_{\epsilon})_{\epsilon\in[0,1[}$ and $(K_2^n)_{n\geq1}$.
	
	Then the problem:
	\[
		\partial_t u(t,x)+G(D^2u(t,x),Du(t,x),u(t,x),u,t,x)=0\quad \textrm{in}\ Q(\R^d),\quad u(T,.)=0.
	\]
	has the classical solution $u\in C^{2}(\bar{Q}(\R^d))$ such that 
	\[
		\|u\|_{2,Q(\R^d)}\leq NK_1
	\]
		and the constant $N$ depends only on $\alpha,\ d,\ \nu,\ \delta,\ K,\ (K_{\epsilon})$. 
\end{cor}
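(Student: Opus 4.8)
The plan is to exhaust $\R^d$ by the open balls $D_n$ of radius $n$ centred at the origin, to solve the Dirichlet problem on each cylinder $Q(D_n)$ by Theorem \ref{tw_existence_classical}, and then to pass to the limit $n\to\infty$ using the compatibility encoded in Definition \ref{def_class_tilde_G}. For each $n$ I equip $D_n$ with a defining function $\psi_n$ that coincides with the signed boundary distance in a fixed neighbourhood of $\partial D_n$ and is capped in the interior; such a $\psi_n$ has $|\nabla\psi_n|=1$ on $\partial D_n$ and a $C^{2+\alpha}$ norm bounded independently of $n$, so the domain part of Assumption \ref{assumptions_iPDE} holds with constants $\delta,K$ that do not depend on $n$ — this is exactly the observation recorded after Theorem \ref{tw_existence_classical}. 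Since $G\in\tilde{\G}_1(\alpha,\nu,K,K_1,(K_2^n),(K_{\epsilon}),\R^d)$, Definition \ref{def_class_tilde_G} supplies functions $G_n\in\G_1(\alpha,\nu,K,K_1,K_2^n,(K_{\epsilon}),D_n)$ satisfying the coordination condition $G_n(0,0,0,0,T,\cdot)=0$ on $\partial D_n$. As $\G_1\subset\G$, taking the constant approximating sequence equal to $G_n$ verifies Assumption \ref{assumptions_iPDE} for $G_n$, and Theorem \ref{tw_existence_classical} produces a classical solution $u_n\in C^2(\bar Q(D_n))$ of
\[
\partial_t u_n + G_n(D^2u_n,Du_n,u_n,u_n,t,x)=0 \quad \textrm{in}\ Q(D_n), \quad u_n=0 \quad \textrm{on}\ \partial'Q(D_n),
\]
together with the estimate $\|u_n\|_{2,Q(D_n)}\leq NK_1$ in which $N$ depends only on $\alpha,d,\nu,\delta,K,(K_{\epsilon})$ and hence is uniform in $n$.

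Next I would extract a limit by a diagonal argument along the nested balls. Fix $m$; for every $n>m$ the restriction $u_n|_{\bar Q(D_m)}$ is bounded in $C^2$ by $NK_1$, uniformly in $n$. Because property 2 of the class $\G$ makes the principal part uniformly parabolic with ellipticity constant $\nu$, while the nonlocal dependence enters only through the lower-order control of properties 3--4, the interior Schauder a priori estimate underlying Theorem \ref{tw_existence_classical} upgrades this to a bound $\sup_{n>m+1}\|u_n|_{\bar Q(D_m)}\|_{2+\alpha,Q(D_m)}<\infty$ on the interior cylinder $Q(D_m)\subset\subset Q(D_{m+1})$. By Arzel\`a--Ascoli the family is precompact in $C^2(\bar Q(D_m))$, and a diagonal extraction over $m$ yields a subsequence $(u_{n_k})_k$ and a limit $u\in C^2(\bar Q(\R^d))$ with $u_{n_k}\to u$ in $C^2(\bar Q(D_m))$ for every $m$, while retaining the uniform $C^{2+\alpha}$ bound on each $\bar Q(D_m)$.

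Finally I would pass to the limit in the equation. Fix $(t,x)\in Q(\R^d)$; for all large $k$ one has $(t,x)\in Q(D_{n_k})$, and the two modes of convergence just obtained — $D^2u_{n_k}\to D^2u$, $Du_{n_k}\to Du$, $u_{n_k}\to u$ pointwise, together with the local $C^{2+\alpha}$ boundedness and $C^2$ convergence of the function argument $w_{n_k}=u_{n_k}$ — are precisely the hypotheses of the convergence condition in Definition \ref{def_class_tilde_G}. Hence $G_{n_k}(D^2u_{n_k},Du_{n_k},u_{n_k},u_{n_k},t,x)\to G(D^2u,Du,u,u,t,x)$, and since $\partial_t u_{n_k}(t,x)\to\partial_t u(t,x)$ as well, the limit $u$ solves $\partial_t u+G(D^2u,Du,u,u,t,x)=0$ on $Q(\R^d)$. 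The terminal condition $u(T,\cdot)=0$ holds because every $u_n$ vanishes on $\{T\}\times D_n$, and the bound $\|u\|_{2,Q(\R^d)}\leq NK_1$ follows by passing to the limit in $\|u_{n_k}|_{\bar Q(D_m)}\|_{2,Q(D_m)}\leq NK_1$ and then letting $m\to\infty$.

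I expect the main obstacle to be the regularity upgrade in the second step: Theorem \ref{tw_existence_classical} only guarantees a uniform $C^2$ bound, whereas the compatibility condition defining $\tilde{\G}$ requires uniform $C^{2+\alpha}$ control on the nested balls together with $C^2$ convergence there. Establishing this demands interior H\"older estimates for the second derivatives that are uniform in $n$; the nonlocal integral term is the delicate point, since an interior estimate on $D_m$ formally sees $u_n$ on all of $\R^d$, but the finite-activity structure ($\lambda=\sup_{v\in\v}v(\r0)<\infty$) together with the uniform sup-bound renders that contribution a bounded lower-order forcing, so the estimate closes.
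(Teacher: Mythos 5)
Your overall strategy is exactly the paper's: solve the Dirichlet problem on the balls $D_n$ with Theorem \ref{tw_existence_classical} (noting that $\delta$ and $K$ can be taken uniform in $n$), extract a diagonal subsequence converging in $C^2(\bar{Q}(D_m))$ for every $m$ via Arzel\`a--Ascoli, and pass to the limit in the equation using point 2 of Definition \ref{def_class_tilde_G}; the terminal condition and the final bound are handled the same way as in the paper.

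The one place where you diverge is the step you yourself flag as the main obstacle, and there your argument has a genuine gap. You are right that compactness in $C^2(\bar{Q}(D_m))$ needs equicontinuity of the second-order (parabolic) derivatives, i.e.\ a uniform $C^{2+\alpha}$ bound on each $\bar{Q}(D_m)$, and you are right that the statement of Theorem \ref{tw_existence_classical} records only $\|u^n\|_{2,Q(D_n)}\leq NK_1$. But your proposed repair --- an interior regularity upgrade from the uniform $C^2$ bound --- is asserted rather than proved, and it is not routine: the equation is fully nonlinear of Bellman type, so interior $C^{2+\alpha}$ estimates would require Evans--Krylov-type theory, not classical Schauder theory; the cylinders $Q(D_m)$ and $Q(D_{m+1})$ share the same time interval, so the estimate you need is interior only in space but global in time (up to $t=T$ and down to $t=0$), which has to be argued separately; and point 2 of Definition \ref{def_class_tilde_G} demands the $2+\alpha$ bound on all of $\bar{Q}(D_m)$, not merely on a compactly contained subcylinder, so an interior estimate does not literally verify its hypotheses. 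The paper sidesteps all of this: the estimate actually furnished by the Mikulevicius--Pragarauskas result (Theorem 1 in \cite{classical_bound}) is the H\"older estimate $\|u^n\|_{2+\alpha,Q(D_n)}\leq NK_1$ with $N$ independent of the domain, and this is precisely what the paper's proof invokes; with it, relative compactness in $C^2(\bar{Q}(D_m))$ and the verification of the convergence condition in Definition \ref{def_class_tilde_G} are immediate. So the correct fix is to quote the stronger, $C^{2+\alpha}$ form of the existence theorem rather than to develop interior regularity theory for the nonlocal Bellman equation.
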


\begin{proof}
By the assumptions on $G$ and Theorem \ref{tw_existence_classical} we know that for each $n\in\N$ there exists a classical solution $u^n$ for the problem
	\[
		\partial_t u^n(t,x)+G_n(D^2u^n(t,x),Du^n(t,x),u^n(t,x),u^n,t,x)=0\quad \textrm{in}\ Q(D_n),\quad u=0\quad \textrm{on}\ \partial'Q(D_n).
	\]
	such that 
	\[
		\|u^n\|_{2+\alpha,Q(D_n)}\leq NK_1.
	\]
	Let $n\geq m$, where $m\geq 1$ is fixed. Then of course we have also following bound
	\[
		\|u^n|_{\bar{Q}(D_m)}\|_{2+\alpha,Q(D_m)}\leq NK_1.
	\]
	Note that the set $\bar{Q}(D_m)$ is compact for every $m\in\N$ thus the family $\{u^n|_{\bar{Q}(D_m)}\colon n\geq m\}$ is relatively compact in the $C^{2}(\bar{Q}(D_m))$ topology (by the Arzel\`a-Ascoli theorem). Thus we can choose the sequence $(n_k)_k$ by using the diagonal argument, such that 
	$\{u^{n_k}|_{\bar{Q}(D_m)}\colon k\geq m\}$ is a Cauchy sequence in $C^2({\bar{Q}(D_m)})$ for each $m\in \N$. Thus we may define a unique function $u\in C^2_{loc}({\bar{Q}(\R^d)})$ as the limit of these Cauchy sequences. 
	
	We claim that $u$ belongs to $C^2({\bar{Q}(\R^d)})$ and that $u$ is the solution of the integro-PDE. The first assertion follow from the fact for each $n\geq m$ we have a bound
\[
		\|u^n|_{\bar{Q}(D_m)}\|_{2,Q(D_m)}\leq \|u^n|_{\bar{Q}(D_m)}\|_{2+\alpha,Q(D_m)}\leq NK_1.
\]
thus by the definition of $u$ we easily get
\[
		\|u|_{\bar{Q}(D_m)}\|_{2,Q(D_m)}\leq NK_1 \quad\textrm{and}\quad \|u\|_{2,Q(\R^d)}\leq NK_1.
\]
Now we can easily prove the second assertion by noting that the sequence 
\[
(D^2u^{n_k}(t,x), Du^{n_k}(t,x), u^{n_k}(t,x),u^{n_k})_{k\in\N}
\] 
satisfies the assumptions of Definition \ref{def_class_tilde_G}, point 2, so for each $(t,x)\in Q(\R^d)$  we have that $G_{n_k}(D^2u^{n_k}(t,x), Du^{n_k}(t,x), u^{n_k}(t,x),u^{n_k},t,x)\to G(D^2u(t,x), Du(t,x), u(t,x),u,t,x)$ as \break $k\to\infty$ (and of course $\partial_t u^{n_k}(t,x)\to \partial_t u(t,x)$ as $k\to\infty$). Thus $u$ solves our equation.
\end{proof}

Using this corollary we are able to prove that the integro-PDE  \eqref{eq_IPDE_backwards}  has a classical solution if the terminal condition is sufficiently regular. 
\begin{prop}\label{prop_existence_classical_phi}
	Let $\phi\in C^2(\R^d)$. Then under Assumption \ref{ass_non-degeneracy} the equation  \eqref{eq_IPDE_backwards} has a classical solution.
\end{prop}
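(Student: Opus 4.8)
The plan is to read \eqref{eq_IPDE_backwards} as an equation of Mikulevicius--Pragarauskas type and to apply Corollary \ref{cor_existence_classical_Rd}. Since that corollary produces solutions with the homogeneous terminal condition $u(T,\cdot)=0$, I would first reduce to this case. Following the standing restriction of this section I take $\phi$ smooth with compact support (so in particular $\phi\in C^{2+\alpha}$) and set $\tilde u:=u-\phi$. Writing \eqref{eq_IPDE_backwards} for $u=\tilde u+\phi$ and using the product form $\u=\v\times\{0\}\times\q$ of Assumption \ref{ass1} (so that the supremum over $\u$ splits into independent suprema over $\v$ and $\q$), the equation for $\tilde u$ becomes $\partial_t\tilde u+\tilde G(D^2\tilde u,D\tilde u,\tilde u,\tilde u,t,x)=0$ with $\tilde u(T,\cdot)=0$, where
\[
	\tilde G(A,r,s,w,t,x)=\sup_{v\in\v}\int_{\r0}\bigl[w(t,x+z)+\phi(x+z)-w(t,x)-\phi(x)\bigr]v(dz)+\sup_{Q\in\q}\tfrac12\tr\bigl[(A+D^2\phi(x))QQ^T\bigr].
\]
The whole task then reduces to checking that $\tilde G$ lies in the class $\tilde\G_1$ of Definition \ref{def_class_tilde_G}.

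First I would verify the structural properties of the class $\G$. Convexity in $A$ is immediate, since $\tilde G$ is a supremum of maps affine in $A$. Uniform ellipticity is exactly where Assumption \ref{ass_non-degeneracy} enters: for $\xi\in\R^d$ the increment $\tilde G(A+\xi\xi^T,\dots)-\tilde G(A,\dots)$ equals $\sup_Q\tfrac12\tr[(A+\xi\xi^T)QQ^T]-\sup_Q\tfrac12\tr[AQQ^T]$, which is bounded below by $\underline{\sigma}^2|\xi|^2$ by the second part of Assumption \ref{ass_non-degeneracy} (using $\xi\xi^T\ge0$) and above by $\tfrac12(\sup_Q\tr[QQ^T])|\xi|^2$ by \eqref{eq_property_of_u}; this fixes $\nu$. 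The Lipschitz dependence on $(A,r,s)$ is likewise controlled by $\sup_Q\tr[QQ^T]<\infty$ (there is no $r$- or $s$-dependence, as there is no drift). For the H\"older estimate (property 4) I would control the nonlocal term by peeling off derivatives of $w$: from $|w(t,x+z)-w(t,x)|\le\|Dw\|_0|z|$ and the finiteness of $\sup_v\int_{\r0}|z|v(dz)$ in \eqref{eq_property_of_u}, together with the interpolation inequalities $\|Dw\|_0,\|D^2w\|_0\le\epsilon\|w\|_{2+\alpha}+C_\epsilon\|w\|_0$, one gets the required bound $\epsilon\|w\|_{2+\alpha}+K_\epsilon\|w\|_0$, while the fixed $\phi$-dependent terms are bounded in $C^\alpha$ and absorbed into $K_1$. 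Continuity in $w$ (property 5) follows from dominated convergence in the integral, the dominating bound again coming from the first-moment control, the supremum over $v$ passing to the limit because these estimates are uniform in $v$.

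Next I would upgrade to $\G_1$ on each ball and arrange the boundary coordination. The $w$-Lipschitz bound $|\tilde G(v,w,\cdot)-\tilde G(v,w',\cdot)|_0\le 2\lambda\|w-w'\|_0$ is immediate from the finite-activity assumption $\lambda=\sup_{v\in\v}v(\r0)<\infty$, which yields $K_2$. Smoothness in $v$ is obtained by mollifying the map $\s_d\ni A\mapsto\sup_Q\tfrac12\tr[AQQ^T]$ in the $A$-variable; since each difference quotient stays in the admissible range, mollification preserves both convexity and the two-sided ellipticity bounds, so the mollified operators are infinitely differentiable in $v$ and remain in $\G$. Finally, to meet the coordination condition $G_n(0,0,0,0,T,\cdot)=0$ on $\partial D_n$, I would subtract from each ball operator a smooth correction $\rho_n(x)$ agreeing with the fixed function $x\mapsto\tilde G(0,0,0,0,T,x)$ near $\partial D_n$ and vanishing in the interior; as this function tends to $0$ at infinity (its nonlocal part decays by the first-moment bound and its $D^2\phi$ part is compactly supported), the corrections can be taken with $\|\rho_n\|_0\to0$ and $\sup_n\|\rho_n\|_\alpha<\infty$, so that $G_n\to\tilde G$ in the sense of Definition \ref{def_class_tilde_G} while the coordination condition holds exactly. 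With $\tilde G\in\tilde\G_1$ in hand, Corollary \ref{cor_existence_classical_Rd} yields $\tilde u\in C^2(\bar Q(\R^d))$, and $u:=\tilde u+\phi\in C^2$ solves \eqref{eq_IPDE_backwards} with $u(T,\cdot)=\phi$.

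The main obstacle is the construction of the approximating ball operators $G_n$ that simultaneously lie in $\G_1$, satisfy the coordination condition on $\partial D_n$, and converge to $\tilde G$: reconciling the $v$-mollification needed for smoothness, the localization to $D_n$, and the boundary correction $\rho_n$ (made necessary because the terminal datum $\phi$ produces a genuine source term that does not vanish on $\partial D_n$) is the delicate point. It is precisely here that both the finite-activity decay of the nonlocal part and the compact support of $\phi$ are essential, while the non-degeneracy of Assumption \ref{ass_non-degeneracy} supplies the uniform ellipticity without which the regularity theorem could not be invoked.
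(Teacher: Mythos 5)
Your proof follows essentially the same route as the paper's own: the identical reduction $u:=v-\phi$ to a zero terminal condition via the shifted operator $G_{\phi}(A,w,t,x)=G(A+D^2\phi(x),w+\phi(x),t,x)$, the same verification (driven by Assumption \ref{ass_non-degeneracy} for ellipticity and \eqref{eq_property_of_u} for the remaining bounds) that this operator fits the hypotheses of Corollary \ref{cor_existence_classical_Rd}, and the same mollification of $G^c$ in the matrix variable to produce the smooth approximating sequence required for the class $\tilde{\G}_1$. You in fact supply more detail than the paper does --- notably the explicit treatment of the coordination condition $G_n(0,0,0,0,T,\cdot)=0$ on $\partial D_n$ via the decaying corrections $\rho_n$ --- where the paper simply declares these verifications ``rather clear.''
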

\begin{proof}
	Note that we may rewrite the equation by introducing the new operator
	\[
		G(A,w,t,x):=G^c(A)+G^d(w,t,x):=\sup_{Q\in\q}\frac{1}{2} \tr\left[AQQ^T\right]+\sup_{v\in\v}\int_{\r0}\left[w(t,x+z)-w(t,x)\right]v(dz)
	\]
	Define $G_{\phi}(A,r,u,w,t,x):=G(A+D^2\phi(x),w+\phi(x),t,x):=G^c_{\phi}(A)+G^d_{\phi}(w,t,x)$.
	
	It is easy to notice that $v$ is the classical solution of the equation \eqref{eq_IPDE_backwards}  if and only if $u(t,x):=v(t,x)-\phi(x)$ is the classical solution of  the following equation
	\begin{equation}\label{eq_iPDE_phi}
		\partial_t\, u(t,x)+G_{\phi}(D^2u(t,x),u,t,x)=0,\quad u(T,.)\equiv 0.
	\end{equation}
	It is rather clear that due to the non-degeneracy assumption, $G_{\phi}$ satisfies the conditions in Corollary \ref{cor_existence_classical_Rd} for some constants which depend only on the set $\u$ (or rather $\q$), $d$ and $\|\phi\|_{2,\R^d}$. Moreover, by regularizing $G^c$ using the smooth approximation of unity, we can get the existence of the sequence of operators, which is smooth in the first variable. Hence, using the corollary we get the existence of the solution $u$ and thus also $v$. 
\end{proof}

\section{Compensated pure-jump processes}
In this section we will consider 'compensated' $G$-\itolevy integral and prove that such a process is a $G$-martingale. Such a result is a direct analouge of the fact that the integral w.r.t. compensated Poisson random measure is a martingale. In our case however we don't know how to compensate the jump measure, thus we will need to compensate the whole integral.

To be more exact, for a pure-jump integral $\int_0^{t}\int_{\r0}K(s,z)L(ds,dz),\ K\in\hgTR$ we consider the compensated integral defined as
\[
	\int_0^{t}\int_{\r0}K(s,z)L(ds,dz)-\int_0^{t}\sup_{v\in\v}\int_{\r0}K(s,z) v(dz)ds.
\]
The first thing we will show is that the correction term lies in a appropriate space.

\begin{prop}\label{prop_compensated_in_L_G}
	Under Assumption \ref{ass1} and \ref{ass_non-degeneracy} for each $K\in\hgTR$ and $t\in[0,T]$
	\[
		J_t(K):=\int_0^{t}\sup_{v\in\v}\int_{\r0}K(s,z) v(dz)ds
	\]
	is an element of $L^2_G(\Omega_t)$.
\end{prop}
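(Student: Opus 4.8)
The plan is to establish a single boundedness estimate of the form $\GE[|J_t(K)|^2]\le \lambda t\,\|K\|_{\hgTR}^2$, and then to conclude by density together with the fact that $L^2_G(\Omega_t)$ is by definition a complete space. Concretely, I would first check that for an elementary random field $K$ the correction term $J_t(K)$ already belongs to $\lipt$, and then pass to a general $K\in\hgTR$ through the estimate.

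The a priori estimate is purely pathwise. Fixing $\omega$ and using $\big|\sup_{v\in\v}a_v\big|\le\sup_{v\in\v}|a_v|$ together with $\big|\int_{\r0}K\,v(dz)\big|\le\int_{\r0}|K|\,v(dz)$, one obtains
\[
|J_t(K)|\le\int_0^t\sup_{v\in\v}\int_{\r0}|K(s,z)|\,v(dz)\,ds.
\]
Cauchy--Schwarz in the $z$-variable and the finite activity bound $v(\r0)\le\lambda$ give $\int_{\r0}|K|\,v(dz)\le\lambda^{1/2}\big(\int_{\r0}|K|^2\,v(dz)\big)^{1/2}$, hence $\sup_{v\in\v}\int_{\r0}|K|\,v(dz)\le\lambda^{1/2}\big(\sup_{v\in\v}\int_{\r0}|K|^2\,v(dz)\big)^{1/2}$. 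A second application of Cauchy--Schwarz, this time in the $s$-variable over $[0,t]$, yields the pathwise bound
\[
|J_t(K)|^2\le\lambda t\int_0^T\sup_{v\in\v}\int_{\r0}|K(s,z)|^2\,v(dz)\,ds.
\]
Applying $\GE$ to both sides and invoking monotonicity gives $\GE[|J_t(K)|^2]\le\lambda t\,\|K\|_{\hgTR}^2$, so $K\mapsto J_t(K)$ is a bounded linear map.

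For an elementary $K$ as in \eqref{eq_K_representation} I would verify directly that $J_t(K)\in\lipt$. On each interval $]t_k,t_{k+1}]$ the integrand equals $\sup_{v\in\v}\sum_{l=1}^m F_{k,l}\int_{\r0}\psi_l\,dv=h_S(F_{k,1},\dots,F_{k,m})$, the support function of the fixed bounded set $S:=\big\{\big(\int_{\r0}\psi_l\,dv\big)_{l=1}^{m}:v\in\v\big\}\subset\R^m$ (bounded since $\psi_l(0)=0$, each $\psi_l$ is Lipschitz, and every $v\in\v$ integrates $|z|$). Support functions of bounded sets are finite, convex, and globally Lipschitz with $h_S(0)=0$; composing $h_S$ with the bounded Lipschitz maps $F_{k,l}=\phi_{k,l}(X_{t_1},\dots,X_{t_k}-X_{t_{k-1}})$ therefore produces a bounded Lipschitz function of the increments, i.e. an element of $Lip(\Omega_{t_k})$. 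Integrating this piecewise-constant-in-time process against $ds$ up to $t$ writes $J_t(K)$ as a finite nonnegative linear combination of such variables, so $J_t(K)\in\lipt\subset L^2_G(\Omega_t)$.

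Finally, for a general $K\in\hgTR$ I would take elementary $K_n\to K$ in $\hgTR$; by linearity and the estimate, $J_t(K_n)$ is Cauchy in $L^2_G(\Omega_t)$ and hence converges to some $\xi\in L^2_G(\Omega_t)$. To identify $\xi$ with the explicitly defined $J_t(K)$, I would pass to a subsequence along which $\int_0^T\sup_{v\in\v}\int_{\r0}|K_n-K|^2\,v(dz)\,ds\to0$ quasi-surely; the same pathwise estimate applied to $K_n-K$ then forces $J_t(K_n)\to J_t(K)$ q.s., so the $L^2_G$-limit $\xi$ agrees with $J_t(K)$ outside a polar set. I expect the two genuinely delicate points to be this last identification, which relies on the standard fact that $L^2_G$-convergence implies convergence in capacity along a subsequence, and the verification that the supremum over $\v$ depends on the elementary data in a Lipschitz way (via the support function) so that the correction term truly lies in $\lipt$; the a priori estimate itself is elementary.
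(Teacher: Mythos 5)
Your proposal follows essentially the same route as the paper's own proof: check that $J_t(K)\in\lipt$ for elementary random fields (your support-function packaging of that step is a clean substitute for the paper's direct verification that the relevant function of the increments is bounded and Lipschitz), then extend to all of $\hgTR$ by a density argument driven by an estimate of $\GE[|J_t(\cdot)|^2]$ against $\|\cdot\|^2_{\hgTR}$; your extra care in identifying the abstract $L^2_G$-limit with the pathwise integral via q.s.\ convergence along a subsequence is a detail the paper glosses over.

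One correction is needed, however: $J_t$ is \emph{not} a linear map, because of the supremum over $\v$; it is only sublinear. So the claim that ``$K\mapsto J_t(K)$ is a bounded linear map'' is false, and the step ``by linearity and the estimate, $J_t(K_n)$ is Cauchy'' does not go through as written, since $J_t(K_n)-J_t(K_m)\neq J_t(K_n-K_m)$ in general; the same objection applies to your final q.s.\ identification, where you apply ``the same pathwise estimate to $K_n-K$''. The repair is immediate and uses only tools you already have: replace the inequality $\bigl|\sup_{v}a_v\bigr|\le\sup_{v}|a_v|$ at the start of your pathwise computation by $\bigl|\sup_{v}a_v-\sup_{v}b_v\bigr|\le\sup_{v}|a_v-b_v|$, which upgrades your bound to the pathwise Lipschitz estimate
\[
	|J_t(K)-J_t(K')|^2\;\le\;\lambda t\int_0^T\sup_{v\in\v}\int_{\r0}|K(s,z)-K'(s,z)|^2\,v(dz)\,ds,
\]
and hence $\GE\bigl[|J_t(K)-J_t(K')|^2\bigr]\le\lambda t\,\|K-K'\|^2_{\hgTR}$. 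This Lipschitz (rather than linear) continuity of $J_t$ from $\hSTR$ into $L^2_G(\Omega_t)$ is exactly what the paper invokes for its extension step, and it powers both your Cauchy-sequence argument and the subsequence identification. With this one-line repair your argument is complete and coincides in substance with the paper's proof.
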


\begin{proof}
	First, we will prove that the assertion is true for $K\in \hSTR$. In fact by the linearity of the integral w.r.t. time and by the fact that $L^2_G(\Omega_t)$ is a linear space, we can to consider $K$ of the following form
	\[
		K(s,z):=\,\I_{]t_1,t_2]}(s)\, \sum_{k=1}^mF_k\psi_k(z),
	\]
	where $0\leq t_1<t_2\leq T$, $F_k\in Lip(\Omega_{t_1})$, $\psi_k \in C_{b,lip}(\R^d)$ such that $\psi_k(0)=0$. Assume additionally that $F_k$ has the following representation:
	\[
		F_k=\phi_k(X_{s_1},X_{s_2}-X_{s_1},\ldots,X_{s_{n-1}}-X_{s_n}),\ \phi_k\in C_{b,lip}(\R^{d\times n}),\ 0\leq s_1<\ldots< s_n\leq t_1.
	\]
	For such a simple $K$ we have
	\[
	J_t(K)=(t_2\wedge t -t_1\wedge t)\, \sup_{v\in\v}\,\sum_{k=1}^m F_k\int_{\r0}  \psi_k(z)v(dz).
	\]	
	We will prove that $J_t(K)\in Lip(\Omega_t)$. Consider the function
	\[
		 \phi(x_1,\ldots,x_n):=\sup_{v\in\v}\,\sum_{k=1}^m \phi_k(x_1,\ldots,x_n)\int_{\r0}  \psi_k(z)v(dz).
	\]
	$\psi$ is bounded because $\phi_k$ and $\psi_k$ are bounded. We will prove now that $\phi$ is Lipschitz continuous. Let $x,y\in\R^{d\times n}$
	\begin{align*}
		|\phi(x)-\phi(y)|=&\left|\sup_{v\in\v}\,\sum_{k=1}^m \phi_k(x)\int_{\r0}  \psi_k(z)v(dz)-\sup_{v\in\v}\,\sum_{k=1}^m \phi_k(y)\int_{\r0}  \psi_k(z)v(dz)\right|\\
		\leq& \sup_{v\in\v}\left|\,\sum_{k=1}^m \phi_k(x)\int_{\r0}  \psi_k(z)v(dz)-\sum_{k=1}^m \phi_k(y)\int_{\r0}  \psi_k(z)v(dz)\right|\\
		\leq& \sup_{v\in\v}\,\sum_{k=1}^m \left|\phi_k(x)-\phi_k(y)\right|\int_{\r0}  |\psi_k(z)|v(dz)\\
		\leq& \sum_{k=1}^m \left|\phi_k(x)-\phi_k(y)\right|\,\sup_{v\in\v}\int_{\r0}  |\psi_k(z)|v(dz)\leq L|x-y|
	\end{align*}
	for some constant $L>0$ as all $\phi_k$ are Lipschitz continuous and all $\psi_k$ are bounded.	As the conclusion we get that $J_t(K)\in Lip(\Omega_t)$.
	
	Finally, we notice that $J_t$ is a Lipschitz-continuous function from $\hSTR$ to $L^2_G(\Omega_t)$ thus we may easily get the assertion of the theorem for all $K\in \hgTR$.
\end{proof}

Now we can state the main result of this section.
\begin{tw}\label{tw_compensated_jump_mart}
	Assume Assumption \ref{ass1} and \ref{ass_non-degeneracy}. For a fixed $K\in\hgTRo$ define the compensated pure-jump integral
\[
	M_t:=\int_0^{t}\int_{\r0}K(s,z)X(ds,dz)-\int_0^{t}\sup_{v\in\v}\int_{\R^d}K(s,z) v(dz)ds.
\]
	Then for any non-increasing $G$-martingale $N_t=\int_0^t H_s\colon\qB_s-\int_0^T\sup_{Q\in\q}\tr[H_sQQ^T]ds$, $\mathcal H^1_G(0,T;\s_d)$ we have that $M+N$ is a $G$-martingale. In particular, taking $N\equiv 0$ we get that $M$ is also a $G$-martingale.
\end{tw}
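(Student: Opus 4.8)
The plan is to verify the defining identity $\GE[M_t+N_t\,|\,\F_s]=M_s+N_s$ for all $0\le s\le t\le T$, splitting it into the two inequalities and exploiting the product structure $\u=\v\times\{0\}\times\q$ of Assumption \ref{ass1}. First I would reduce to elementary integrands: the pathwise jump integral is a continuous operator from $\hgTRo$ into $L^1_G(\Omega)$, the correction term $J_t$ is controlled as in Proposition \ref{prop_compensated_in_L_G}, the map $H\mapsto\int_0^\cdot H_s\colon d\qB_s$ is continuous, and $\GE[\,\cdot\,|\F_s]$ is $\|\cdot\|_1$-continuous, so it suffices to treat $K\in\hSTR$ and elementary $H$. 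By the tower property of the conditional $G$-expectation it then suffices to establish the identity over a single interval $]t_i,t_{i+1}]$ of a common partition, on which the coefficients of $K=\sum_k F_k\psi_k$ and of $H$ are $\F_{t_i}$-measurable (frozen).

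For the upper bound I would use the conditional representation of Proposition \ref{prop_rep_cond_exp}. Since both $M$ and $N$ are $G$-martingales ($N$ by hypothesis, $M$ to be shown), subadditivity of $\GE$ gives directly $\GE[M_t+N_t|\F_s]\le\GE[M_t|\F_s]+\GE[N_t|\F_s]=M_s+N_s$. To see that $M$ itself is a $G$-martingale, I would pull each $\P^\theta\in\mathfrak P$ back to the reference space via Theorem \ref{tw_rep_sub_lin}: under $\P^\theta$ the canonical jump integral becomes a compensated Poisson integral whose predictable compensator is $\int\!\int K(u,z)\,v^\theta_u(dz)\,du$ with $v^\theta_u:=\mu\circ(\theta^d(u,\cdot))^{-1}\in\v$, so the drift rate of $M$ equals $\int K\,v^\theta_u(dz)-\sup_{v\in\v}\int K\,v(dz)\le0$. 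Hence $M$ is a $\P$-supermartingale for every $\P\in\mathfrak P$, giving $\GE[M_t|\F_s]\le M_s$; the same computation for $N$, whose drift rate is $\tr[H_uQ^\theta_u(Q^\theta_u)^T]-\sup_{Q\in\q}\tr[H_uQQ^T]\le0$ with $Q^\theta_u\in\q$, recovers its supermartingale property.

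The reverse inequality is where the product structure is essential. On $]t_i,t_{i+1}]$ I would choose the jump and volatility controls separately: a predictable selection $\theta^{d,\ast}$ whose image measure $v^{\ast}_u$ attains (or, if the supremum is not reached, $\tfrac1n$-approximates) $\sup_{v\in\v}\int K(u,z)\,v(dz)$ for a.e. $u$, realized through the parametrization $v(\cdot)=\mu\circ g_v^{-1}$ of the Remark following Assumption \ref{ass1}, together with a selection $Q^\ast_u$ attaining $\sup_{Q\in\q}\tr[H_uQQ^T]$. Because $\tilde\u=\G_{\B}\times\{0\}\times\q$ is a genuine product, the pair $\theta^\ast=(\theta^{d,\ast},0,\theta^{c,\ast})$ is admissible, i.e. $\theta^\ast\in\a^{\u}$, and may be chosen to agree with $\P$ on $[0,s]$. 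Under $\P^{\theta^\ast}$ both drift rates vanish (or are within $1/n$), so $\E^{\P^{\theta^\ast}}[M_t+N_t|\F_s]=M_s+N_s$ up to an error tending to $0$, whence $\GE[M_t+N_t|\F_s]\ge M_s+N_s$. Combining the two bounds yields the claim, and the special case $N\equiv0$ gives that $M$ alone is a $G$-martingale.

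The main obstacle I anticipate is this lower bound, specifically the measurable selection realizing the worst-case \levy measure as the image $\mu\circ(\theta^{d}(u,\cdot))^{-1}$ of a genuinely admissible predictable field while simultaneously maximizing the sign-indefinite linear functional $v\mapsto\int\sum_k F_k\psi_k\,v(dz)$; the product form of $\tilde\u$ and the density bounds $\underline c\le f_v\le\bar c$ of Assumption \ref{ass_non-degeneracy} are exactly what make the independent optimization of the jump and volatility parts legitimate and the combined control admissible. A secondary technical point is the precise identification, under each $\P^\theta$, of the predictable compensator of the pathwise integral $\int\!\int K\,X(du,dz)$ with $\int\!\int K\,v^\theta_u(dz)\,du$, which I would import from the construction of the jump integral in \cite{moj}; if an \ito-type verification is preferred one could instead regularize and invoke the classical solvability of the associated integro-PDE from Proposition \ref{prop_existence_classical_phi}.
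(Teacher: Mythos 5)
Your plan is correct, and its mathematical core coincides with the paper's: reduction to elementary one-step integrands, pullback to the reference space via Theorem \ref{tw_rep_sub_lin}, identification of the compensator of the jump integral under $\P^\theta$ as $\int\!\int K(u,z)\,v^\theta_u(dz)\,du$ with $v^\theta_u=\mu\circ(\theta^d(u,\cdot))^{-1}\in\v$, and the product form of $\tilde\u$ to optimize the jump and volatility controls independently. The difference is in how the conditioning is handled, and it is not cosmetic. You keep the conditional expectation throughout (via Proposition \ref{prop_rep_cond_exp}) and therefore must argue two inequalities: a supermartingale bound under every $\P\in\mathfrak{P}$, and a lower bound built from near-optimal \emph{predictable} controls --- which is exactly why you hit the measurable-selection obstacle you flag. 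That obstacle is genuinely surmountable (the functional $v\mapsto\sum_l F_{k,l}\int_{\r0}\psi_l\,dv$ is Lipschitz in the coefficient vector $(F_{k,l})_l$ uniformly over $\v$, so a piecewise-constant selection over a countable Borel partition of the range of $(F_{k,l})_l$ yields an $\epsilon$-optimal admissible $\theta^{d,\ast}$), but the paper sidesteps it entirely: by the iterated definition of $\GE[\,\cdot\,|\F_{t_k}]$ it freezes the $\F_{t_k}$-measurable coefficients at a deterministic value $x$, so the claim becomes the unconditional identity $A=B$ for each fixed $x$, both inequalities drop out of the single computation $\sup_{\theta}\E^{\P_0}[\cdots]=B(x)$, and the lower bound needs only \emph{constant deterministic} controls $(g_{v^\ast},0,Q^\ast)$ --- no selection, no supermartingale theory, no optional-sampling-type care with null sets. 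Two small corrections to your write-up: first, your upper bound needs only the supermartingale inequality $\GE[M_t|\F_s]\le M_s$ (which you prove) together with the hypothesis $\GE[N_t|\F_s]=N_s$, so phrase it that way rather than as subadditivity applied to two martingales, which sounds circular; second, what legitimizes the independent optimization is the product form $\u=\v\times\{0\}\times\q$ from Assumption \ref{ass1}, not the density bounds of Assumption \ref{ass_non-degeneracy} --- the latter is listed as a hypothesis of the theorem but is never actually invoked in this proof (it is needed for the a priori estimates and the regularity of the integro-PDE).
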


\begin{proof}
	It is sufficient to consider the processes $K$ and $H$ of the form
	\[
			K(s,z)=\sum_{k=1}^{n-1}\,\sum_{l=1}^m\, F_{k,l}\,\I_{]t_{k},t_{k+1}]}(s)\,\psi_l(z),\quad H_s=\sum_{k=1}^{n-1}\, G_{k}\,\I_{]t_{k},t_{k+1}]}(s)
	\]
	where $0\leq t_1<\ldots<t_n\leq T$ is the partition of $[0,T]$, $\{\psi_l\}_{l=1}^m\subset C_{b,lip}(\R^d)$ are functions with disjoint supports s.t. $\psi_l(0)=0$ and $F_{k,l}=\phi_{k,l}(X_{t_1},\ldots, X_{t_k}-X_{t_{k-1}})$, $G_{k}=\phi_{k}(X_{t_1},\ldots, X_{t_k}-X_{t_{k-1}})$ with $\phi_{k,l}\in C_{b,lip}(\R^{d\times k})$ and $ \phi_k\in C_{b,lip}(\R^{d\times k};\s_d)$ (i.e. it is a bounded Lipschitz function of $\R^{d\times k}$ taking values in $\s_d$). In fact we need only to consider the one-step case
	\begin{align*}
		\GE[M_{t_{k+1}}&-M_{t_{k}}+N_{t_{k+1}}-N_{t_{k}}|\F_{t_k}]\\=& \GE\left[\sum_{l=1}^m F_{k,l}\sum_{t_k<s\leq t_{k+1}} \psi_l(\Delta X_s) +\tr[G_k(\qB_{t_{k+1}}-\qB_{t_{k}})\Big|\F_{t_k}\right]\\
		&-\left[\sup_{v\in\v}\sum_{l=1}^m F_{k,l}\sum_{t_k<s\leq t_{k+1}}\int_{\R^d} \psi_l(z)v(dz)+\sup_{Q\in\q}\tr[G_kQQ^T]\right](t_{k+1}-t_{k})=:A-B
	\end{align*}
	We want to prove that $A=B$. Let $\Delta X:=(X_{t_1},\ldots,X_{t_k}-X_{t_{k-1}})$. By the definition of the conditional expectation it is easy to see that
	\[
		A=\GE\left[\sum_{l=1}^m \phi_{k,l}(x)\sum_{t_k<s\leq t_{k+1}} \psi_l(\Delta X_s) +\tr[\phi_k(x)(\qB_{t_{k+1}}-\qB_{t_{k}})\right]\Big|_{x=\Delta X}.
	\]
	Now we use Theorem \ref{tw_rep_sub_lin} to transform the sublinear expectation $\GE[.]$ into an upper-expectation using the argument just as in Theorem 20 in \cite{moj}:
	\begin{align*}
			A&=\sup_{\theta\in\a^{\u}_{0,T}}\E^{\P_0}\left[\sum_{l=1}^m \phi_{k,l}(x)\sum_{t_k<s\leq t_{k+1}} \psi_l(\theta^d(s,\Delta N_u) ) +\tr[\phi_k(x)\int^{t_{k+1}}_{t_{k}}\theta_s^c(\theta^c_s)^Tds]\right]\Big|_{x=\Delta X}\\
			&=\sup_{\theta\in\a^{\u}_{0,T}}\E^{\P_0}\left[\sum_{l=1}^m \phi_{k,l}(x)\int_{t_k}^{ t_{k+1}}\int_{\r0} \psi_l(\theta^d(s,z) )N(ds,dz) +\tr[\phi_k(x)\int^{t_{k+1}}_{t_{k}}\theta_s^c(\theta^c_s)^Tds]\right]\Big|_{x=\Delta X}\\
			&=\sup_{\theta\in\a^{\u}_{0,T}}\E^{\P_0}\left[\sum_{l=1}^m \phi_{k,l}(x)\int_{t_k}^{ t_{k+1}}\int_{\r0} \psi_l(\theta^d(s,z) )\mu(dz)ds +\tr[\phi_k(x)\int^{t_{k+1}}_{t_{k}}\theta_s^c(\theta^c_s)^Tds]\right]\Big|_{x=\Delta X}\\
&=\left[\sup_{v\in\v}\sum_{l=1}^m \phi_{k,l}(x)\int_{\R^d} \psi_l(z)v(dz) +\sup_{Q\in\q}\tr[\phi_k(x)QQ^T]\right](t_{k+1}-t_k)\Big|_{x=\Delta X}=B.\qedhere
		\end{align*}
\end{proof}
Note that the compensated pure-jump integral is a $G$-martingale, but it is not symmetric in general under the Assumption \ref{ass1}. Thus it has a nature which is completely different from the \ito integral w.r.t. $G$-Brownian motion.

\section{A priori estimates for the $G$-martingale decomposition}\label{sec_apriori}

In this section we will assume that a $G$-martingale $M$ has the following decomposition
\begin{equation}\label{eq_representation_M}
	M_t = M_0 + \int_0^t H_s \cdot dB_s -K^c_t +\int_0^t\int_{\R^d} K^d(s,z)L(ds,dz)-\int_0^t \sup_{v\in\v} \int_{\r0} K^d(s,z)v(dz)ds\ q.s.,
\end{equation}
where $H\in \mathcal{H}^2_G(0,T;\R^d)$, $K^c$ is a non-decreasing process in $\s^2_G(0,T)$  such that $-K^c$ is a $G$-martingale and $K^c_0=0$ and $K^d \in \hgTR$. We will give the estimates of the norms of these processes in terms of the process $M$. 

\begin{tw}\label{tw_estimates1}
	Let $M$ has the decomposition as in eq. (\ref{eq_representation_M}). Under Assumption \ref{ass1} and \ref{ass_non-degeneracy} there exists a constant $C$ depending only on the dimension $d$ such that 
	\[
		\|H\|^2_{ \mathcal{H}^2_G(0,T;\R^d)}+\|K^c\|^2_{\s^2_G(0,T)}+\|K^d\|^2_{\hgTR}\leq C \|M\|^2_{\s^2_G(0,T)}.
	\]
\end{tw}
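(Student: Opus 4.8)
The plan is to estimate each of the three components of the decomposition
\eqref{eq_representation_M} by isolating it through a clever choice of test and by
exploiting the different martingale natures of the pieces. The key structural fact
is that, among the components, only the Brownian integral $\int_0^t H_s\cdot dB_s$ is
a symmetric $G$-martingale, while $-K^c$ and the compensated jump part are
$G$-martingales but not symmetric. First I would apply the sublinear expectation
$\GE[\,\cdot\,]$ to $|M_T - M_0|^2$ (or $\sup_t|M_t|^2$) and use the
Burkholder--Davis--Gundy-type inequalities available in the $G$-framework together
with the orthogonality between the continuous (Brownian and $K^c$) part and the
pure-jump part. The jumps of $M$ occur exactly on the jumps of $X$, so that
$\Delta M_s = \int_{\r0} K^d(s,z)\,\I_{\{z=\Delta X_s\}}$, which lets me read off the
jump coefficient $K^d$ from the jumps of $M$ itself; this gives control of
$\|K^d\|_{\hgTR}$ in terms of the quadratic-variation-type functional of $M$.

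The natural order of steps is: (i) separate the continuous martingale part
$M^c_t := M_0 + \int_0^t H_s\cdot dB_s - K^c_t$ from the pure-jump part
$M^d_t := \int_0^t\int_{\r0}K^d L(ds,dz) - \int_0^t \sup_{v\in\v}\int_{\r0}K^d\,v(dz)\,ds$,
using that $M^d$ is purely discontinuous and of finite variation under the finite
activity assumption while $M^c$ is continuous; (ii) estimate $\|K^d\|_{\hgTR}^2$ by
summing squared jumps and using the representation of $\GE$ as an upper expectation
(Theorem~\ref{tw_rep_sub_lin}) together with the non-degeneracy lower bound
$\underline{c}\,\pi \le v$ from Assumption~\ref{ass_non-degeneracy}, which guarantees
that the $\sup_v$-functional controls each individual $v$-integral from below;
(iii) estimate $\|H\|_{\mathcal H^2_G(0,T;\R^d)}^2$ by using the non-degeneracy of the
volatility, i.e.\ the condition $\sup_Q\frac12\tr[AQQ^T] - \sup_Q\frac12\tr[BQQ^T]\ge
\underline\sigma^2\tr[A-B]$, so that the $G$-quadratic variation $\qB$ is bounded below
by $\underline\sigma^2$ times Lebesgue measure; this turns an estimate on
$\int_0^T H_s\colon d\qB_s$ into the desired $\mathcal H^2_G$-bound on $H$; (iv) finally
recover $\|K^c\|_{\s^2_G(0,T)}$ as a remainder, since $-K^c = M^c - M_0 - \int H\cdot dB$
and each term on the right is already controlled. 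Throughout I would pass first to
elementary $H$, $K^c$, $K^d$ as in the density definitions of the relevant spaces and
then close by the continuity of all integral operators.

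The main obstacle I expect is Step (iii): extracting $\|H\|_{\mathcal H^2_G}$ in the
genuinely sublinear (non-dominated) setting. In the classical linear case one would
use $\GE^{\P}[\int |H_s|^2 d\langle B\rangle_s] \le \GE^{\P}[(\int H\cdot dB)^2]$ under
a single measure, but here the quadratic variation $\qB$ is itself uncertain, ranging
over $\{QQ^T : Q\in\q\}$, and the decoupling of $H$ from $K^c$ is delicate because
both live on the continuous part. The non-degeneracy inequality in
Assumption~\ref{ass_non-degeneracy} is exactly what rescues this: it provides a
uniform ellipticity $d\qB_s \ge \underline\sigma^2\, I\,ds$ in the appropriate matrix
sense, which converts the a priori bound on the martingale into a bound on $H$
independent of the particular $Q$. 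I would therefore expect to spend most of the
argument verifying that this lower bound transfers correctly through the $\tr[\cdot]$
pairing and the $\esup$ over $\mathfrak{P}$, and in controlling the cross term between
$\int H\cdot dB$ and $K^c$ via the supermartingale (non-increasing) structure of
$-K^c$.
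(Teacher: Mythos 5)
Your toolkit overlaps with the paper's (It\^o formula on $M^2$, squeezing the jump integral between the $\inf_{v\in\v}$ and $\sup_{v\in\v}$ compensators, the constants $\underline c,\bar c$ to pass to the $\pi$-norm, and $\underline\sigma^2$ to convert $\int_0^T H_sH_s^T\colon d\qB_s$ into $\|H\|^2_{\mathcal H^2_G(0,T;\R^d)}$ — all of this matches the paper), but the architecture you propose — bound $K^d$ first, then $H$, then read off $K^c$ as a remainder — breaks at its first step, and the tool you invoke to rescue it does not work. To control any quadratic-variation functional of $M$ (in particular the sum of squared jumps $\int_0^T\int_{\r0}|K^d(s,z)|^2L(ds,dz)$) by $\|M\|^2_{\s^2_G(0,T)}$, you must expand $M^2$ by It\^o under each fixed $\P\in\mathfrak{P}$; BDG is not available here because $M$ is only a $\P$-supermartingale, not a $\P$-(local) martingale. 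That expansion unavoidably produces the cross term $\int_t^T M_{s-}\,dK^c_s$, which has no sign (because $M_{s-}$ has none), so the non-increasing/supermartingale structure of $-K^c$ that you plan to use gives nothing beyond the trivial $K^c_T\ge 0$. The only available bound is $2\int_t^T|M_{s-}|\,dK^c_s\le \epsilon^{-1}\sup_s|M_s|^2+\epsilon\,(K^c_T)^2$, so every estimate of $K^d$ (and of $H$) necessarily carries a $\GE[(K^c_T)^2]$ term on its right-hand side; symmetrically, the only route to $\GE[(K^c_T)^2]$ is through the decomposition equation, whose right-hand side contains the $H$- and $K^d$-integrals. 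The three estimates are therefore intrinsically coupled and cannot be closed in the sequential order (ii)$\to$(iii)$\to$(iv).

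What is missing is the absorption (fixed-point) step that the paper uses to break exactly this circularity: it derives the coupled inequalities simultaneously, substitutes the $H$- and $K^d$-bounds into the $K^c$-bound, and then chooses $\delta=\underline c/(2\bar c)$ and $\epsilon=\underline c/\bigl(20\bar c(3T+1)\bigr)$ so that the resulting coefficient of $\GE[(K^c_T)^2]$ on the right is $1/2$ and can be absorbed into the left-hand side. Only after this does one obtain the clean bound $\GE[(K^c_T)^2]\le C_1\GE[\sup_{s}|M_s|^2]$, from which the $K^d$-bound and finally the $H$-bound follow — note the order is the reverse of yours. Without this quantitative absorption argument, none of your three estimates closes, so the proposal as written has a genuine gap rather than a mere difference of route.
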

\begin{proof}
	We will follow the idea in \cite{Soner_mart_rep}. Applying the \ito formula for a \itolevy process $M$ we easily get
	\begin{align}\label{eq_M2_dynamics}
			M_T^2&=M_t^2+2\int_t^T M_{s-}H_s\cdot dB_s+\int_t^T H_sH_s^T\colon d\qB_s-2\int_t^TM_{s-}dK^c_t\notag\\
			&-2\int_t^TM_{s-}\sup_{v\in\v}\int_{\r0}K^d(s,z)v(dz)ds+\int_t^T\int_{\r0}\left[(M_{s-}+K^d(s,z))^2-(M_{s-})^2\right]L(ds,dz)
	\end{align}
	Note that the last term might be rewritten as $\int_t^T\int_{\r0}\left[2M_{s-}K^d(s,z) +(K^d(s,z))^2\right]L(ds,dz)$.

	Fix $\P\in\p$. Note that by exactly the same argument as in the proof of Theorem 20 in \cite{moj} we have that for every $K\in\hgTRo$ we have 
	\begin{multline}\label{eq_norm_estimate_integral}
			\E^{\P}\left[\int_t^T\inf_{v\in\v}\int_{\r0}K(s,z)v(dz)ds\right]\leq \E^{\P}\left[\int_t^T\int_{\r0}K(s,z)L(ds,dz)\right]\\\leq \E^{\P}\left[\int_t^T\sup_{v\in\v}\int_{\r0}K(s,z)v(dz)ds\right].
	\end{multline}

	Taking the $\P$-expectation in the equation (\ref{eq_M2_dynamics}) and using eq. \eqref{eq_norm_estimate_integral} and Assumptio \ref{ass_non-degeneracy} we get
	\begin{align}\label{eq_estimate_M^2+H^2}
		0\leq &\E^{\P}\left[M_t^2+\int_t^T H_sH_s^T\colon d\qB_s\right]=\E^{\P}\left[M_T^2+2\int_t^TM_{s-}dK^c_t \right.\notag\\
			&\left. +2\int_t^TM_{s-}\sup_{v\in\v}\int_{\r0}K^d(s,z)v(dz)ds-\int_t^T\int_{\r0}\left[2M_{s-}K^d(s,z) +(K^d(s,z))^2\right]L(ds,dz)\right]\notag\\
				\leq& \E^{\P}\left[M_T^2+2\int_t^T|M_{s-}|dK^c_t+2\int_t^TM_{s-}\sup_{v\in\v}\int_{\r0}K^d(s,z)v(dz)ds \right]\notag\\
			& +\E^{\P}\left[2\int_t^T\sup_{v\in\v}\int_{\r0}|M_{s-}K^d(s,z)|v(dz)ds-\int_t^T\inf_{v\in\v}\int_{\r0}(K^d(s,z))^2v(dz)ds\right] \notag\\
			\leq& \E^{\P}\left[\sup_{s\in[t,T]}|M_s|^2+2\sup_{s\in[t,T]}|M_{s}|K^c_T+ \int_t^T\sup_{v\in\v}\int_{\r0}4|M_{s-}||K^d(s,z)|v(dz)ds \right]\notag\\
			&-\E^{\P}\left[\int_t^T\inf_{v\in\v}\int_{\r0}(K^d(s,z))^2v(dz)ds\right]\notag\displaybreak[3]\\
			\leq&(1+\epsilon^{-1}+4(T-t)\delta^{-1})\E^{\P} \left[\sup_{s\in[t,T]}|M_{s}|^2\right]+\epsilon\E^{\P}\left[(K^c_T)^2\right]\notag\\
			&+\E^{\P}\left[\delta\int_t^T\sup_{v\in\v}\int_{\r0}|K^d(s,z)|^2v(dz)ds-\int_t^T\inf_{v\in\v}\int_{\r0}(K^d(s,z))^2v(dz)ds\right]\notag\displaybreak[3]\\
			\leq&(1+\epsilon^{-1}+4(T-t)\delta^{-1})\E^{\P} \left[\sup_{s\in[t,T]}|M_{s}|^2\right]+\epsilon\E^{\P}\left[(K^c_T)^2\right]\notag\\
			&-(\underline{c}-\delta\bar{c})\E^{\P}\left[\int_t^T\int_{\r0}|K^d(s,z)|^2\pi(dz)ds\right]
	\end{align}
	where $\epsilon$ and $\delta$ are some positive constants. Will will use this equation three times. First, assume $\delta>0$ is small enough so that $\underline{c}-\delta\bar{c}>0$. Then it's trivial to get the estimate for $K^d$:
	\begin{equation}\label{eq_estimate_kd^2_1}
				\GE\left[\int_t^T\int_{\r0}|K^d(s,z)|^2\pi(dz)ds\right] \leq\frac{1+\epsilon^{-1}+4(T-t)\delta^{-1}}{\underline{c}-\delta\bar{c}}\GE \left[\sup_{s\in[t,T]}|M_{s}|^2\right]+\frac{\epsilon\GE\left[(K^c_T)^2\right]}{\underline{c}-\delta\bar{c}}
	\end{equation}
Then by the eq. (\ref{eq_representation_M}), continuity of \itolevy integral as an operator, H\"older inequality and again by eq. (\ref{eq_estimate_M^2+H^2}) and eq. (\ref{eq_estimate_kd^2_1}) we also get that
\begin{align}
	\E^{\P}\left[|K^c_T|^2\right] \leq& 5\E^{\P}\left[ |M_T|^2+|M_0|^2+\left(\int_0^T H_s\cdot dB_s\right)^2+\left(\int_0^T\int_{\r0} K^d(s,z)L(ds,dz)\right)^2\right.\notag\\
	&+\left.\left(\int_0^T \sup_{v\in\v} \int_{\r0} K^d(s,z)v(dz)ds\right)^2\right]\notag\\
	\leq& 5\E^{\P}\left[ \sup_{s\in[0,T]}|M_s|^2+|M_0|^2+\left(\int_0^T H_sH_s^T:d\qB_s\right)^2\right]\notag\\
		&+5\bar{c}(3T+1)\GE\left[\left(\int_0^T\int_{\r0} |K^d(s,z)|^2\pi(dz)ds\right)\right]\notag\\
	\leq& 5 \GE\left[\sup_{s\in[0,T]}|M_s|^2\right]+5(1+\epsilon^{-1}+
		4(T-t)\delta^{-1})\GE \left[\sup_{s\in[t,T]}|M_{s}|^2\right]+5\epsilon\GE\left[(K^c_T)^2\right]\notag\\
			&+5(\bar{c}(3T+1)-\underline{c}+\delta\bar{c})\GE\left[\int_0^T\int_{\r0}|K^d(s,z)|^2\pi(dz)ds\right]\notag\\
		\leq& 5 \GE\left[\sup_{s\in[0,T]}|M_s|^2\right]+5(1+\epsilon^{-1}+
		4T\delta^{-1})\GE \left[\sup_{s\in[0,T]}|M_{s}|^2\right]+5\epsilon\GE\left[(K^c_T)^2\right]\notag\\
			&+5\frac{\bar{c}(3T+1)-\underline{c}+\delta\bar{c}}{\underline{c}-\delta\bar{c}}\left[(1+\epsilon^{-1}+4T\delta^{-1})\GE \left[\sup_{s\in[0,T]}|M_{s}|^2\right]+\epsilon\GE\left[(K^c_T)^2\right]\right]\notag
\end{align}
Taking supremum over $\P\in\mathfrak{P}$ and rearranging this equation we get
\begin{align}
		\GE\left[|K^c_T|^2\right]\left[1-5\epsilon\frac{\bar{c}(3T+1)}{\underline{c}-\delta\bar{c}}\right]\leq 5\GE\left[\sup_{s\in[0,T]}|M_s|^2\right] \left[1+\left(1+\epsilon^{-1}+\frac{4T}{\delta}\right) \frac{\bar{c}(3T+1)}{\underline{c}-\delta\bar{c}}\right]
\end{align}
Fix $\delta:=\frac{\underline{c}}{2\bar{c}}$ and $\epsilon:=\frac{\underline{c}}{20\bar{c}(3T+1)}$. Then the coefficient on the LHS is equal to $1/2$ and we get the estimate
\begin{equation}\label{eq_estimate_K^c}
 	\GE\left[|K^c_T|^2\right]\leq 10\GE\left[\sup_{s\in[0,T]}|M_s|^2\right] \left[1+ 8\frac{\bar{c}^2(17T+5)(3T+1)}{\underline{c}^2}\right]=:C_1\,\GE\left[\sup_{s\in[0,T]}|M_s|^2\right]
\end{equation}
We can now use this estimate in eq. (\ref{eq_estimate_kd^2_1}) to get the existence of the constant $C_2$ such that
	\begin{equation}\label{eq_estimate_kd^2_2}
				\GE\left[\int_0^T\int_{\r0}|K^d(s,z)|^2\pi(dz)ds\right] \leq C_2\,\GE \left[\sup_{s\in[0,T]}|M_{s}|^2\right]
	\end{equation}
In the end we using this estimate and eq. (\ref{eq_estimate_K^c}) in eq. (\ref{eq_estimate_M^2+H^2}) we can get the existence of a constant $C_3$ such that 
	\begin{equation}\label{eq_estimate_H^2_2}
				\GE\left[\int_0^T H_sH_s^T\colon d\qB_s\right] \leq C_3\,\GE \left[\sup_{s\in[0,T]}|M_{s}|^2\right].
	\end{equation}
	Connecting these equations we get the assertion of the theorem.
\end{proof}

Using a very similar technique we may prove a theorem for the differences.

\begin{tw}\label{tw_estimates_differences}
	Let $M^i,\ i=1,2$ has the decomposition as below 
	\begin{equation}\label{eq_representation_Mbar}
	M^i_t = M^i_0 + \int_0^t H^i_s\cdot  dB_s -K^{i,c}_t +\int_0^t\int_{\r0} K^{i,d}(s,z)L(ds,dz)-\int_0^t \sup_{v\in\v} \int_{\R^d} K^{i,d}(s,z)v(dz)ds,\ q.s.,
\end{equation}
where $H^i\in\mathcal{H}^2_G(0,T;\R^d)$, $K^{i,c}$ is a non-decreasing process in $\s^2_G(0,T)$  such that $-K^{i,c}$ is a $G$-martingale and $K^{i,c}_0=0$ and $K^{i,d} \in \hgTR$, $i=1,2$.
Let $\bar{\cdot}$ denotes the difference between processes $\cdot^1$ and $\cdot^2$. Then under Assumption \ref{ass1} and \ref{ass_non-degeneracy} there exists a constant $C$ depending only on the dimension $d$ such that 
	\begin{align*}
		\|\bar{H}\|^2_{ \mathcal{H}^2_G(0,T;\R^d)}&+\|\bar{K}^c\|^2_{\s^2_G(0,T)}+\|\bar{K}^d\|^2_{\hgTR}\\
		&\leq C  \left[\|\bar{M}_{s}\|_{\s^2_G(0,T)}^2+\|\bar{M}_{s}\|_{\s^2_G(0,T)}\left(\|M^1_{s}\|_{\s^2_G(0,T)}+\|M^2_{s}\|_{\s^2_G(0,T)}\right)\right].
	\end{align*}
\end{tw}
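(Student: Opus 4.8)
The plan is to mimic the proof of Theorem~\ref{tw_estimates1}, applying the \ito formula to $\bar M^2$ and carefully tracking the two features that distinguish the difference from the single-martingale case: the compensator of $\bar M$ is a \emph{difference} of suprema $\sup_{v\in\v}\int_{\r0}K^{1,d}v(dz)-\sup_{v\in\v}\int_{\r0}K^{2,d}v(dz)$ rather than one supremum, and the continuous non-decreasing part $\bar K^c=K^{1,c}-K^{2,c}$ is no longer monotone. First I would subtract the two representations \eqref{eq_representation_Mbar} to obtain the decomposition of $\bar M$, and then apply the \ito formula to $\bar M^2$ exactly as in \eqref{eq_M2_dynamics}. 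Taking the $\P$-expectation for a fixed $\P\in\p$ kills the $G$-Brownian stochastic integral and yields the analogue of the opening line of \eqref{eq_estimate_M^2+H^2}: the nonnegative quantity $\E^{\P}[\bar M_t^2+\int_t^T\bar H_s\bar H_s^T\colon d\qB_s]$ equals $\E^\P[\bar M_T^2]$ plus the contributions of the $\bar K^c$-term, the jump-compensator-difference term, and the $L$-integral terms.

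The jump terms are handled essentially verbatim. I would use \eqref{eq_norm_estimate_integral} to bound $-\E^\P[\int_t^T\int_{\r0}(\bar K^d)^2L(ds,dz)]$ from above by $-\E^\P[\int_t^T\inf_{v\in\v}\int_{\r0}(\bar K^d)^2v(dz)ds]$, and control the difference of suprema pointwise by $|\sup_{v\in\v}\int_{\r0}K^{1,d}v(dz)-\sup_{v\in\v}\int_{\r0}K^{2,d}v(dz)|\leq\sup_{v\in\v}\int_{\r0}|\bar K^d|v(dz)$, so that after Young's inequality and Assumption~\ref{ass_non-degeneracy} these contribute only a multiple of $\E^\P[\sup_{s\in[0,T]}|\bar M_s|^2]$ together with the favourable negative term $-(\underline c-\delta\bar c)\E^\P[\int_t^T\int_{\r0}|\bar K^d|^2\pi(dz)ds]$. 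The genuinely new step is the $\bar K^c$-term: since $\bar K^c$ is a difference of two non-decreasing processes I would estimate $|2\int_t^T\bar M_{s-}d\bar K^c_s|\leq 2\sup_{s\in[0,T]}|\bar M_s|(K^{1,c}_T+K^{2,c}_T)$ and, after passing to the supremum over $\P\in\mathfrak{P}$, apply H\"older's inequality for $\GE$ and then the already-established bound \eqref{eq_estimate_K^c} (which gives $\GE[(K^{i,c}_T)^2]\leq C_1\|M^i\|^2_{\s^2_G(0,T)}$) to convert this into the cross term $C\|\bar M\|_{\s^2_G(0,T)}(\|M^1\|_{\s^2_G(0,T)}+\|M^2\|_{\s^2_G(0,T)})$ of the assertion.

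From the resulting master inequality I would extract the three estimates in the same order as before. Dropping the nonnegative term $\E^\P[\int_t^T\bar H\bar H^T\colon d\qB]$ and rearranging bounds $\GE[\int_0^T\int_{\r0}|\bar K^d|^2\pi(dz)ds]$, hence $\|\bar K^d\|^2_{\hgTR}$, by $C(\|\bar M\|^2_{\s^2_G(0,T)}+\|\bar M\|_{\s^2_G(0,T)}(\|M^1\|_{\s^2_G(0,T)}+\|M^2\|_{\s^2_G(0,T)}))$; dropping instead the negative $\bar K^d$-term gives the same bound for $\GE[\int_0^T\bar H\bar H^T\colon d\qB]=\|\bar H\|^2_{\mathcal H^2_G(0,T;\R^d)}$. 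For $\bar K^c$ I would solve \eqref{eq_representation_Mbar} for $\bar K^c_t$ as a sum of $\bar M$, the $G$-Brownian integral $\int_0^t\bar H\cdot dB$, the jump integral $\int_0^t\int_{\r0}\bar K^d(s,z) L(ds,dz)$ and the compensator integral, and then bound its $\s^2_G(0,T)$-norm termwise, controlling the running supremum of each stochastic integral by the corresponding integrand norm via the $G$-maximal (Doob/Burkholder--Davis--Gundy-type) inequalities and invoking the $\bar H$- and $\bar K^d$-estimates just obtained.

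The main obstacle is precisely the loss of monotonicity of $\bar K^c$. In Theorem~\ref{tw_estimates1} one exploited $\sup_t|K^c_t|=K^c_T$, which fails for a difference of increasing processes; this forces both modifications above, namely estimating the $\bar K^c$-term in the \ito expansion through the two separate increasing processes (this is what produces the cross terms instead of a pure $\|\bar M\|^2$ bound) and recovering $\|\bar K^c\|_{\s^2_G(0,T)}$ indirectly from the decomposition and maximal inequalities rather than from the terminal value. I would flag that the crude bound $\sup_t|\bar K^c_t|\leq K^{1,c}_T+K^{2,c}_T$ is too weak, since it would yield only $\|M^1\|^2_{\s^2_G(0,T)}+\|M^2\|^2_{\s^2_G(0,T)}$ and fail to vanish as $\bar M\to0$; hence the finer decomposition argument for $\bar K^c$ is essential.
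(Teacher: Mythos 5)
Your proposal is correct and follows essentially the same route as the paper: It\^o formula applied to $\bar M^2$, the pointwise bound $|\sup_v\int K^{1,d}v(dz)-\sup_v\int K^{2,d}v(dz)|\leq\sup_v\int|\bar K^d|v(dz)$, the cross terms produced by estimating $\int\bar M_{s-}d\bar K^c_s$ through $K^{1,c}_T+K^{2,c}_T$ and the bound \eqref{eq_estimate_K^c} from Theorem \ref{tw_estimates1}, and finally recovery of $\bar K^c$ from the representation of $\bar M$. Your explicit treatment of the running supremum of $\bar K^c$ via maximal inequalities (needed precisely because $\bar K^c$ is no longer monotone) is a detail the paper compresses into the single sentence ``analogously to the derivation of eq.\ \eqref{eq_estimate_K^c}'', and your flag that the crude bound $\sup_t|\bar K^c_t|\leq K^{1,c}_T+K^{2,c}_T$ would not suffice is a valid observation.
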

\begin{proof}
	Just as in the proof of Theorem \ref{tw_estimates1} we use the \ito formula to get the following estimate
	\begin{align}
		0\leq \E^{\P}&\left[\bar{M}_t^2+\int_t^T \bar{H}_s\bar{H}_s^T\colon d\qB_s\right]=\E^{\P}\left[\bar{M}_T^2+2\int_t^T\bar{M}_{s-}d\bar{K}^c_t\right.\notag\\
		&+2\int_t^T\bar{M}_{s-}\left(\sup_{v\in\v}\int_{\r0}K^{1,d}(s,z)v(dz)ds-\sup_{v\in\v}\int_{\r0}K^{2,d}(s,z)v(dz) \right)ds\notag\\
			&\left. -\int_t^T\int_{\R^d}\left[2\bar{M}_{s-}\bar{K}^d(s,z) +(\bar{K}^d(s,z))^2\right]L(ds,dz)\right]\notag\\
				\leq& \E^{\P}\left[\bar{M}_T^2+2\int_t^T|M_{s-}|d(K^{1,c}_t+K^{2,c}_t)+2\int_t^T|\bar{M}_{s-}|\sup_{v\in\v}\int_{\r0}|\bar{K}^d(s,z)|v(dz)ds \right]\notag\\
			& +2\E^{\P}\left[\int_t^T\sup_{v\in\v}\int_{\r0}|\bar{M}_{s-}\bar{K}^d(s,z)|v(dz)ds\right] 
			-\E^{\P}\left[\int_t^T\inf_{v\in\v}\int_{\r0}(\bar{K}^d(s,z))^2v(dz)ds\right]\notag\displaybreak[3]\\
			\leq& \E^{\P}\left[\sup_{s\in[t,T]}|\bar{M}_s|^2+2\sup_{s\in[t,T]}|\bar{M}_{s}|(K^{1,c}_T+K^{2,c}_T)+ \int_t^T\sup_{v\in\v}\int_{\r0}4|\bar{M}_{s-}||\bar{K}^d(s,z)|v(dz)ds \right]\notag\\
			&-\E^{\P}\left[\inf_{v\in\v}\int_t^T\int_{{\r0}}(\bar{K}^d(s,z))^2v(dz)ds\right]\notag\displaybreak[3]\\
			\leq&(1+\frac{4(T-t)}{\delta})\E^{\P} \left[\sup_{s\in[t,T]}|\bar{M}_{s}|^2\right]+\E^{\P}\left[\sup_{s\in[t,T]}|\bar{M}_{s}|^2\right]^{1/2}\E^{\P}\left[(K^{1,c}_T+K^{2,c}_T)^2\right]^{1/2}\notag\\
			&+\E^{\P}\left[\delta\int_t^T\sup_{v\in\v}\int_{\r0}|\bar{K}^d(s,z)|^2v(dz)ds-\int_t^T\inf_{v\in\v}\int_{\r0}(\bar{K}^d(s,z))^2v(dz)ds\right]\notag\\
			\leq&(1+\frac{4(T-t)}{\delta})\E^{\P} \left[\sup_{s\in[t,T]}|\bar{M}_{s}|^2\right]+\E^{\P}\left[\sup_{s\in[t,T]}|\bar{M}_{s}|^2\right]^{1/2}\left[\E^{\P}[(K^{1,c}_T)^2]^{1/2}+\E^{\P}[(K^{2,c}_T)^2]^{1/2}\right]\notag\\
			&-(\underline{c}-\delta\bar{c})\E^{\P}\left[\int_t^T\int_{\r0}|\bar{K}^d(s,z)|^2\pi(dz)ds\right]
\end{align}		
Thus using the fact that we can estimate the norm of $K^{i,c}$ by the norm of $M^{i}$ by eq. (\ref{eq_estimate_K^c}) we can easily get that 
\begin{equation}\label{eq_estimate_H_diff}
\GE\left[\int_0^T \bar{H}_s\bar{H}_s^T\colon d\qB_s\right]\leq C_1 \left[\|\bar{M}_{s}\|_{\s^2_G(0,T)}^2+\|\bar{M}_{s}\|_{\s^2_G(0,T)}\left(\|M^1_{s}\|_{\s^2_G(0,T)}+\|M^2_{s}\|_{\s^2_G(0,T)}\right)\right]
\end{equation}
and 
\begin{equation}\label{eq_estimate_K_diff}
\GE\left[\int_0^T\int_{\r0}|\bar{K}^d(s,z)|^2\pi(dz)ds\right] \leq C_2 \left[\|\bar{M}_{s}\|_{\s^2_G(0,T)}^2+\|\bar{M}_{s}\|_{\s^2_G(0,T)}\left(\|M^1_{s}\|_{\s^2_G(0,T)}+\|M^2_{s}\|_{\s^2_G(0,T)}\right)\right].
\end{equation}

The estimate of the norm of $\bar{K}^c$ might be obtained analogously to the derivation of eq. (\ref{eq_estimate_K^c}) using the representation of $\bar{M}$.
\end{proof}

\section{Representation of $G$-martingales with a terminate value being a smooth cylinder random variables}

In this section we will use Proposition \ref{prop_existence_classical_phi} to prove that smooth cylinder random variables can be represented as the sum of the stochastic integral w.r.t. $G^c$-Brownian motion, a 'compensated' integral w.r.t. the pure-jump \levy process and a non-increasing continuous $G$-martingale. The procedure will be very similar to the one used in \cite{Peng_estimates} or \cite{Soner_mart_rep}, but we need to take into account the different structure of the operator $G$. 

Firstly, we will need the following easy lemma.
\begin{lem}\label{lem_convergence_in_LL_G}
	Let $(\xi_m)_{m\in\N}\subset Lip(\Omega_T)$ be a sequence of random variables of the form 
		\[
		\xi_m=\phi_m(X_{t_1},X_{t_2}-X_{t_1},\ldots,X_{t_n}-X_{t_{n-1}}).
	\]
	for some partition $0=t_0\leq t_1<t_2<\ldots<t_n\leq T$ and functions $\phi_m\in C^2(\R^{d\times n})$. Assume that $\phi_m$ converges uniformly to $\phi\in C^2(\R^{d\times n})$. Then $\xi_m$ converges to $\xi$ in $\L^2_G(\Omega_T)$, where $\xi$ is defined as
	\[
	\xi:=\phi(X_{t_1},X_{t_2}-X_{t_1},\ldots,X_{t_n}-X_{t_{n-1}}).
	\]
\end{lem}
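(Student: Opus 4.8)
The plan is to establish the convergence $\xi_m \to \xi$ in the norm $\|\cdot\|_{\L^2_G(\Omega_T)}$ by reducing everything to the uniform convergence of the $\phi_m$ together with a uniform control on their Lipschitz constants. First I would recall that, by definition, $\|\eta\|_{\L^2_G(\Omega_T)} = \GE\bigl[\sup_{t\in[0,T]} \GE[|\eta|\,|\F_t]\bigr]$, so I must bound $\GE\bigl[\sup_{t\in[0,T]} \GE[|\xi_m - \xi|\,|\F_t]\bigr]$. The key structural observation is that $\xi_m - \xi = (\phi_m - \phi)(X_{t_1}, X_{t_2}-X_{t_1}, \ldots, X_{t_n}-X_{t_{n-1}})$, so the difference is again a cylinder random variable built from the single function $\psi_m := \phi_m - \phi$, and $\psi_m \to 0$ uniformly by hypothesis.

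The main simplification I would exploit is monotonicity of the (conditional) sublinear expectation. Since $|\xi_m - \xi| = |\psi_m(\ldots)| \leq \|\psi_m\|_{\infty}$ pointwise, and $\GE$ is constant-preserving and monotone, we immediately get $\GE[|\xi_m - \xi|\,|\F_t] \leq \|\psi_m\|_{\infty}$ for every $t$, hence $\sup_{t\in[0,T]} \GE[|\xi_m-\xi|\,|\F_t] \leq \|\psi_m\|_{\infty}$ quasi-surely, and therefore
\[
	\|\xi_m - \xi\|_{\L^2_G(\Omega_T)} = \GE\Bigl[\sup_{t\in[0,T]} \GE[|\xi_m-\xi|\,|\F_t]\Bigr] \leq \|\psi_m\|_{\infty} = \|\phi_m - \phi\|_{\infty}.
\]
Because $\phi_m \to \phi$ uniformly, the right-hand side tends to $0$, which yields the claim directly. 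I would first need to verify that $\xi_m - \xi$ genuinely lies in $\L^2_G(\Omega_T)$, i.e. that each $\phi_m - \phi$ is bounded and Lipschitz so that the cylinder variable sits in $\lipT$; boundedness follows since both $\phi_m$ and $\phi$ are bounded (or at least their difference is, being the uniform limit of a convergent sequence landing in $C^2$), and membership in $\lipT$ then gives that $\|\cdot\|_{\L^2_G}$ is finite.

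A point to handle with care is whether the norm is really controlled by the $\sup$-norm alone or whether one must also invoke Lipschitz bounds. Given the form $\|\eta\|_{\L^2_G(\Omega_T)} = \GE[\sup_t \GE[|\eta|\,|\F_t]]$ (note that this expression is already a single sublinear expectation of a nonnegative quantity, not a squared object despite the ``$2$'' in the notation), the crude bound by $\|\psi_m\|_{\infty}$ suffices and no Lipschitz estimate is needed; the subtlety is merely confirming that the supremum over $t$ of the conditional expectations is a legitimate $\GE$-measurable object, which is guaranteed since each $\xi_m - \xi \in \lipT$ and the conditional expectation is continuous on $L^1_G(\Omega)$ as noted in the preliminaries. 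The main obstacle, such as it is, will therefore not be analytic difficulty but rather bookkeeping: ensuring the definition of the $\L^2_G$ norm is applied correctly and that the uniform bound $|\psi_m(\cdot)| \leq \|\psi_m\|_{\infty}$ passes through the tower of conditional expectations via monotonicity and constant-preservation without needing any finer structure of the measures in $\mathfrak{P}$.
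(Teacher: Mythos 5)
Your proof is correct and takes essentially the same approach as the paper's: bound $|\xi_m-\xi|$ by the constant $\sup_x|\phi_m(x)-\phi(x)|$ and push that bound through the conditional expectation using monotonicity and constant-preservation. The only (immaterial) discrepancy is that the paper's own proof applies this bound to the squared conditional expectations, estimating $\GE\bigl[\sup_{t\in[0,T]}\bigl(\GE[|\xi-\xi_m|\,|\F_t]\bigr)^2\bigr]\leq \bigl[\sup_x|\phi_m(x)-\phi(x)|\bigr]^2$, as befits the intended $p=2$ norm, whereas you work with the norm as literally printed (without the exponent) in the preliminaries; the constant bound passes through in either formulation, so the conclusion is unaffected.
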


\begin{proof}
	The assertion of the lemma follows quickly from the definition of the norm and the monotonicity of the (conditional) expectation:
	
	\begin{align*}
		\GE\left[\sup_{t\in[0,T]}\, \left(\GE[|\xi-\xi_m|\, |\F_t]\right)^2\right]&\leq \GE\left[\sup_{t\in[0,T]}\left(\GE[\sup_{x\in\R^{d\times n}}|\phi_m(x)-\phi(x)|\, |\F_t]\right)^2\right]\\&=[\sup_{x\in\R^{d\times n}}|\phi_m(x)-\phi(x)|]^2.\qedhere
	\end{align*}
\end{proof}

\begin{tw}\label{tw_representation_lip}
	Assume  Assumption \ref{ass1} and \ref{ass_non-degeneracy} and let $\xi\in \lipT$ be of the form
	\[
		\xi=\phi(X_{t_1},X_{t_2}-X_{t_1},\ldots,X_{t_n}-X_{t_{n-1}})
	\]
	for some partition $0=t_0\leq t_1<t_2<\ldots<t_n\leq T$ and a function $\phi\in C_b^2(\R^{d\times n})$. Then there exist unique processes $H\in \mathcal{H}^2_G(0,T;\R^d) $, $K^d\in\hgTR $ and $K^{c}\in \s^2_G(0,T)$, such that $K^c$ is a non-decreasing process m $-K^{c}$ is a $G$-martingale and
	\[
		\xi = \GE[\xi]+\int_0^T H_s\cdot  dB_s- K^c+\int_0^T\int_{\r0}K^d(s,z)L(ds,dz)-\int_0^T\sup_{v\in\v} \int_{\r0}K^d(s,z) v(dz)ds,\ q.s.
	\]
\end{tw}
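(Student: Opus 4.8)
The plan is to represent the $G$-martingale $M_t:=\GE[\xi|\F_t]$, for which $M_T=\xi$ and $M_0=\GE[\xi]$ $q.s.$, and to argue by induction on the number $n$ of time points appearing in $\xi$. The single-interval construction is the engine of the proof and simultaneously yields the base case $n=1$; the inductive step then uses the a priori estimates of Section \ref{sec_apriori} to glue the interval-wise pieces together across the grid.

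For the last block $]t_{n-1},t_n]$, freeze the earlier increments $a:=(X_{t_1},\ldots,X_{t_{n-1}}-X_{t_{n-2}})$, which are $\F_{t_{n-1}}$-measurable, and let $u(t,w;a)$ be the classical solution, furnished by Proposition \ref{prop_existence_classical_phi}, of the integro-PDE \eqref{eq_IPDE_backwards} on $[t_{n-1},t_n]$ with terminal datum $u(t_n,w;a)=\phi(a,w)$, the operator acting in the $w$-variable with $a$ a parameter. Combining the iterated construction of $\GE[\,\cdot\,|\F_t]$ with Theorem \ref{tw_rep_sub_lin} and Proposition \ref{prop_rep_cond_exp} (and uniqueness of the viscosity solution), stationarity and independence of the increments give $M_t=u(t,X_t-X_{t_{n-1}};a)$ for $t\in[t_{n-1},t_n]$. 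Applying the \itolevy formula of \cite{moj} to the smooth $u$ and substituting $\partial_tu=-\sup_{Q\in\q}\tfrac12\tr[D^2_wu\,QQ^T]-\sup_{v\in\v}\int_{\r0}[u(t,w+z;a)-u(t,w;a)]v(dz)$ from the equation, the finite-variation terms recombine and one reads off, on $]t_{n-1},t_n]$, the representation with $H_s=D_wu(s,X_{s-}-X_{t_{n-1}};a)$, with $K^d(s,z)=u(s,X_{s-}-X_{t_{n-1}}+z;a)-u(s,X_{s-}-X_{t_{n-1}};a)$, and with continuous part $\tfrac12\int D^2_wu\colon d\qB-\int\sup_{Q\in\q}\tfrac12\tr[D^2_wu\,QQ^T]\,ds=-K^c$, which is a non-increasing $G$-martingale exactly as in Theorem \ref{tw_compensated_jump_mart}; here $K^c$ is non-decreasing since $\tfrac12 D^2_wu\colon\tfrac{d\qB_s}{ds}\le\sup_{Q\in\q}\tfrac12\tr[D^2_wu\,QQ^T]$ $q.s.$ The uniform bound of Corollary \ref{cor_existence_classical_Rd} on $\|u\|_2$ (in terms of $\|\phi\|_{C^2}$) yields $H\in\mathcal{H}^2_G(0,T;\R^d)$ and $K^d\in\hgTR$, while continuity of $u,D_wu,D^2_wu$ in $a$ makes these genuine adapted processes, approximable by elementary ones.

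The main obstacle appears on passing to the previous block: the value $M_{t_{n-1}}=u(t_{n-1},0;a)=\GE[\xi|\F_{t_{n-1}}]=:\phi^{(n-1)}(X_{t_1},\ldots,X_{t_{n-1}}-X_{t_{n-2}})$ is in general only Lipschitz, not $C^2$ — a sublinear expectation of a $C^2_b$ function need not be $C^2$, and there is no smoothing of $u$ in the frozen variable $a$ — so Proposition \ref{prop_existence_classical_phi} does not apply to $\phi^{(n-1)}$ and a naive backward iteration breaks down. To bypass this I would mollify, taking $\phi^{(n-1)}_m\in C^2_b(\R^{d\times(n-1)})$ with $\phi^{(n-1)}_m\to\phi^{(n-1)}$ uniformly; by the induction hypothesis each $\phi^{(n-1)}_m(X_{t_1},\ldots,X_{t_{n-1}}-X_{t_{n-2}})$ admits a representation on $[0,t_{n-1}]$. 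Lemma \ref{lem_convergence_in_LL_G} gives convergence of the corresponding terminal variables, hence of the martingales in $\s^2_G(0,T)$, and Theorem \ref{tw_estimates_differences} then shows the triples $(H^m,K^{c,m},K^{d,m})$ to be Cauchy in their norms; their limits, together with closedness of the set of non-increasing continuous $G$-martingales vanishing at $0$, furnish a representation of $\GE[\xi|\F_\cdot]$ on $[0,t_{n-1}]$. Concatenating this with the single-interval piece on $]t_{n-1},t_n]$, and with the trivial extension on $[t_n,T]$ where $M$ is constant, gives the global representation.

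Uniqueness follows from Theorem \ref{tw_estimates1}: any $G$-martingale with terminal value $\xi$ equals $\GE[\xi|\F_\cdot]$ by the martingale property, so two representations of $\xi$ share the same $M$, and applying the estimate to $\bar M\equiv0$ forces $\bar H$, $\bar K^c$ and $\bar K^d$ to vanish in their norms. The steps I expect to demand the most care are the regularity bookkeeping in the single-interval construction — namely that $u,D_wu,D^2_wu$ depend continuously on the frozen parameter $a$, so that $H$, $K^c$, $K^d$ are admissible integrands — and the verification that the $\L^2_G$-convergence delivered by Lemma \ref{lem_convergence_in_LL_G} feeds correctly into the $\s^2_G$-based estimates of Theorem \ref{tw_estimates_differences}.
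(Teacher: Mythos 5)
Your proposal is correct and follows essentially the same route as the paper: classical solutions of the IPDE with the earlier increments frozen (Proposition \ref{prop_existence_classical_phi}), the \ito formula to read off $H$, $K^c$, $K^d$ on each block, mollification of the merely Lipschitz conditional-expectation terminal data, and passage to the limit via Lemma \ref{lem_convergence_in_LL_G} and Theorem \ref{tw_estimates_differences} --- the paper simply mollifies in the last variable and iterates block by block rather than phrasing the argument as induction on $n$. One small correction: in your uniqueness step the estimate needed is the one for differences, Theorem \ref{tw_estimates_differences} applied with $\bar M\equiv 0$, not Theorem \ref{tw_estimates1}, since the difference $\bar K^c$ of two non-decreasing processes with the $G$-martingale property for $-K^{i,c}$ need not itself satisfy the hypotheses of Theorem \ref{tw_estimates1}.
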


\begin{proof}
Fix $\xi$ of the form
	\[
		\xi=\phi(X_{t_1},X_{t_2}-X_{t_1},\ldots,X_{t_n}-X_{t_{n-1}}).
	\]
Moreover, introduce the notation $x^k:=(x_1,\ldots,x_k)$ for an element of $\R^{d\times k}$ and $X^{k}:=(X_{t_1},X_{t_2}-X_{t_1},\ldots,X_{t_k}-X_{t_{k-1}})$, $k\leq n$.
 We will establish the representation backwards similarly to Theorem 15 in \cite{Peng_estimates}. Let $u$ be the classical solution of the equation
	\[
		\partial_t\, u^n_{x^{n-1}}(t,x)+G(D^2u^n_{x^{n-1}}(t,x),u^n_{x^{n-1}},t,x)=0,\quad u^n_{x^{n-1}}(t_n,x)=\phi({x^{n-1}},x).
	\]
	We put
	\[
		H^n_s:=\I_{]t_{n-1},t_n]}(s)Du^n_{X^{n-1}}(s,X_s-X_{t_{n-1}}),\]\[		K_s^{n,c}:=\int_{t_{n-1}}^sG^c(D^2u^n_{X^{n-1}}(s,X_s-X_{t_{n-1}}))ds-\frac{1}{2}\int_{t_{n-1}}^sD^2u^n_{X^{n-1}}(s,X_s-X_{t_{n-1}})\colon d\qB_s
	\]
	and
	\[
		K^{n,d}(s,z):=\I_{]t_{n-1},t_n]}(s)\left[u^n_{X^{n-1}}(s,X_{s-}-X_{t_{n-1}}+z)-u^n_{X^{n-1}}(s,X_{s-}-X_{t_{n-1}})\right]
	\]
	By the properties of $u^n_{x^{n-1}}$ it is clear that all processes belong to the appropriate spaces. Moreover, we can apply the \ito formula to obtain that
	\begin{align}\label{eq_mart_rep_lip_1}
		\xi-\GE[\xi|\F_{t_{n-1}}]&=u^n_{X^{n-1}}(t_{n},X_{t_n}-X_{t_{n-1}})-u^n_{X^{n-1}}(t_{n-1},0)\notag\\
		&=\int_{t_{n-1}}^{t_n} H^n_s\cdot dB_s-(K_{t_n}^{n,c}-K_{t_{n-1}}^{n,c})\notag\\&+\int_{t_{n-1}}^{t_n}\int_{\r0}K^{n,d}(s,z)L(ds,dz)-\int_{t_{n-1}}^{t_n}\sup_{v\in\v} \int_{\r0}K^{n,d}(s,z) v(dz)ds.
	\end{align}
	The natural reasoning would be to continue this procedure and try to solve the equation 
		\[
		\partial_t\, u^{n-1}_{x^{n-2}}(t,x)+G(D^2u^{n-1}_{x^{n-2}}(t,x),u^{n-1}_{x^{n-2}},t,x)=0
	\]
	with the terminal condition $u^{n-1}_{x^{n-2}}(t_{n-1},x)=\phi^{n-1}(x^{n-2},x):=u^n_{(x^{n-2},x)}(t_{n-1},0)$. Of course this problem has the solution $u^{n-1}_{x^{n-2}}$ in the viscosity sense, however the existence of the classical solution is more complicated due to the possible lack of smoothness of the terminal condition. To get rid of that problem, we will apply the approximation to the unity. Thus, let $\psi$ be a regular bump function on $\R^d$ and let
	\[
		\phi^{n-1}_{\epsilon}(x^{n-2},x):=\left(\phi^{n-1}(x^{n-2},\,.\,)\ast \psi_{\epsilon}\right)(x).
	\]
	Then, by the standard theory, $\phi^{n-1}_{\epsilon}(x^{n-2},\,.\,)$ is a smooth function converging uniformly on compact sets to $\phi^{n-1}(x^{n-2},\,.\,)$ as $\epsilon\downarrow0$ for each $x^{n-2}\in\R^{d\times(n-2)}$. However, due to the global Lipschitz continuity of $\phi^{n-1}$ we can get much more, namely the uniform (in $x\in \R^{d\times (n-1)}$) convergence of $\phi^{n-1}_{\epsilon}(.)\to \phi^{n-1}(.)$ as $\epsilon\downarrow0$. Hence, let $u^{n-1,\epsilon}_{x^{n-2}}$ denote the classical solution of the following integro-PDE.
	\[
		\partial_t\, u^{n-1,\epsilon}_{x^{n-2}}(t,x)+G(D^2u^{n-1,\epsilon}_{x^{n-2}}(t,x),u^{n-1,\epsilon}_{x^{n-2}},t,x)=0
	\]
	with the terminal condition $u^{n-1,\epsilon}_{x^{n-2}}(t_{n-1},x)=\phi^{n-1}_{\epsilon}(x^{n-2},x)$. Define processes
		\[
		H^{n-1,\epsilon}_s:=\I_{]t_{n-2},t_{n-1}]}(s)Du^{n-1,\epsilon}_{X^{n-2}}(s,X_s-X_{t_{n-2}}),\]
		\[		K_s^{n-1,\epsilon,c}:=\int_{t_{n-2}}^sG^c(D^2u^{n-1,\epsilon}_{X^{n-2}}(s,X_s-X_{t_{n-2}}))ds-\frac{1}{2}\int_{t_{n-2}}^sD^2u^{n-1,\epsilon}_{X^{n-2}}(s,X_s-X_{t_{n-2}})\colon d\qB_s
	\]
	and
	\[
		K^{n-1,\epsilon,d}(s,z):=\I_{]t_{n-2},t_{n-1}]}(s)\left[u^{n-1,\epsilon}_{X^{n-2}}(s,X_{s-}-X_{t_{n-2}}+z)-u^{n-1,\epsilon}_{X^{n-2}}(s,X_{s-}-X_{t_{n-2}})\right]
	\]
	Similarly to eq. (\ref{eq_mart_rep_lip_1}) we have
	\begin{align}\label{eq_mart_rep_lip_2}
		u^{n-1,\epsilon}_{X^{n-2}}&(t_{n-1},X_{t_{n-1}}-X_{t_{n-2}})-u^{n-1,\epsilon}_{X^{n-2}}(t_{n-2},0)	=\int_{t_{n-2}}^{t_{n-1}} H^{n-1,\epsilon}_s\cdot dB_s-(K_{t_{n-1}}^{n-1,\epsilon,c}-K_{t_{n-2}}^{n-1,\epsilon,c})\notag\\&+\int_{t_{n-2}}^{t_{n-1}}\int_{\r0}K^{n-1,\epsilon,d}(s,z)L(ds,dz)-\int_{t_{n-2}}^{t_{n-1}}\sup_{v\in\v} \int_{\r0}K^{n-1,\epsilon,d}(s,z) v(dz)ds.
	\end{align}
	Note also that $u^{n-1,\epsilon}_{X^{n-2}}(t_{n-1},X_{t_{n-1}}-X_{t_{n-2}})=\phi^{n-1}_{\epsilon}(X^{n-1})$ and $\GE[\xi\,|\,\F_{t_{n-1}}]=\phi^{n-1}(X^{n-1})$. Thus by the uniform convergence of $\phi^{n-1}_{\epsilon}$ to $\phi^{n-1}$ and Lemma \ref{lem_convergence_in_LL_G} we easily get that 
	\begin{equation}\label{eq_convergence_in_LL^2}
		u^{n-1,\epsilon}_{X^{n-2}}(t_{n-1},X_{t_{n-1}}-X_{t_{n-2}})\to \GE[\xi\,|\,\F_{t_{n-1}}],\quad \epsilon\downarrow 0\quad \textrm{in } \L^2_G(\Omega_T).
	\end{equation}

	Similarly, by the definition of the conditional expectation and its tower property we have that 
	\[
		\zeta^{\epsilon}:=u^{n-1,\epsilon}_{x^{n-2}}(t_{n-1},0)=\GE\left[u^{n-1,\epsilon}_{X^{n-2}}(t_{n-1},X_{t_{n-1}}-X_{t_{n-2}})\, |\F_{t_{n-2}}\right]
		=\GE\left[\phi^{n-1}_{\epsilon}(X^{n-1})\, |\F_{t_{n-2}}\right]
	\]
	and
	\[
		\zeta:=u^{n-1}_{x^{n-2}}(t_{n-1},0)=\GE[\xi\,|\,\F_{t_{n-2}}] =\GE[\phi^{n-1}(X^{n-1})\,|\,\F_{t_{n-2}}].
	\]
	Thus using the properties of the monotonicity and sublinearity of the conditional expectation, the tower property and the definition of the $\L^2_G$ norm we get
	\begin{align*}
		\GE\left[\sup_{t\in[0,T]}\, \left(\GE[|\zeta-\zeta^{\epsilon}|\, |\F_t]\right)^2\right]&\leq \GE\left[\sup_{t\in[0,T]}\left(\GE\left[\left|\GE\left[\phi^{n-1}_{\epsilon}(X^{n-1})-\phi^{n-1}(X^{n-1})\, |\F_{t_{n-2}}\right]\right|\, |\F_t\right]\right)^2\right]\\
		&\leq \GE\left[\sup_{t\in[0,T]}\left(\GE\left[\GE\left[\left|\phi^{n-1}_{\epsilon}(X^{n-1})-\phi^{n-1}(X^{n-1})\right|\,|\,\F_{t_{n-2}}\right]\, |\F_t\right]\right)^2\right]\displaybreak[3]\\
		&= \GE\left[\sup_{t\in[0,t_{n-2}]}\left(\GE\left[\left|\phi^{n-1}_{\epsilon}(X^{n-1})-\phi^{n-1}(X^{n-1})\right|\,|\,\F_{t}\right]\right)^2\right]\\
		&\leq \GE\left[\sup_{t\in[0,T]}\left(\GE\left[\left|\phi^{n-1}_{\epsilon}(X^{n-1})-\phi^{n-1}(X^{n-1})\right|\,|\,\F_{ t}\right]\right)^2\right].
	\end{align*}
	The last expression converges to $0$ because of the eq. (\ref{eq_convergence_in_LL^2}). Hence we get that $\zeta^{\epsilon}\to \zeta$ in $\L^2_G(\Omega_T)$ as $\epsilon\downarrow0$.  
	To sum up: we know that the LHS in eq. (\ref{eq_mart_rep_lip_2}) in $\L^2_G(\Omega_T)$ to a random variable $\GE[\xi\,|\,\F_{t_{n-1}}]-\GE[\xi\,|\,\F_{t_{n-2}}]$ as $\epsilon\downarrow0$ and by Theorem \ref{tw_estimates_differences} we get that the sequences $\{H^{n-1,\epsilon_m}\}_{m\in \N},\ \{K^{n-1,\epsilon_m,c}\}_{m\in \N},\ \{K^{n-1,\epsilon_m,d}\}_{m\in \N}$ are Cauchy sequences in appropriate spaces for any sequence $\{\epsilon_m\}_{m\in\N}$ such that $\epsilon_m\to 0$ as $m\to \infty$. Thus we also get the existence of the processes $H^{n-1},\ K^{n-1,c},\ K^{n-1,d}$ satisfying the following
		\begin{align*}
		\GE[\xi\,|\,\F_{t_{n-1}}]-&\GE[\xi\,|\,\F_{t_{n-2}}]	=\int_{t_{n-2}}^{t_{n-1}} H^{n-1}_s\cdot dB_s-(K_{t_{n-1}}^{n-1,c}-K_{t_{n-2}}^{n-1,c})\notag\\&+\int_{t_{n-2}}^{t_{n-1}}\int_{\r0}K^{n-1,d}(s,z)L(ds,dz)-\int_{t_{n-2}}^{t_{n-1}}\sup_{v\in\v} \int_{\r0}K^{n-1,d}(s,z) v(dz)ds.
	\end{align*}
	
	We can iterate this procedure to obtain processes $H^{i},\ K^{i,c},\ K^{i,d},\ i=1,\ldots,n$ such that for each $i\in\{1,\ldots,n\}$ we have similar representation
			\begin{align*}
		\GE[\xi\,|\,\F_{t_{i}}]-&\GE[\xi\,|\,\F_{t_{i-1}}]	=\int_{t_{i-1}}^{t_{i}} H^{i}_s\cdot dB_s-(K_{t_{i}}^{i,c}-K_{t_{i-1}}^{i,c})\notag\\&+\int_{t_{i-1}}^{t_{i}}\int_{\r0}K^{i,d}(s,z)L(ds,dz)-\int_{t_{i-1}}^{t_{i}}\sup_{v\in\v} \int_{\r0}K^{i,d}(s,z) v(dz)ds
	\end{align*}
	and thus putting $H:=\sum_{i=1}^n H^i,\ K^c:=\sum_{i=1}^n K^{i,c}$ and $K^d:=\sum_{i=1}^n K^{i,d}$ we obtain the desired representation for $\xi$.
\end{proof}

\section{Characterization of the random variables in the space $\L^2_G(\Omega)$.} 
In this section we show that $\L^2_G(\Omega_T)$ space is a large space and contains all random variables in $L^p_G(\Omega_T)$ space, $p>2$. Namely we have the following proposition.

\begin{prop}\label{prop_LL^2_G_contains_L^p_G}
Under Assumption \ref{ass1} for any $p>2$ there exists a constant $C_p$ such that for all $\xi\in Lip(\Omega_T)$ we have
\[
	\|\xi\|_{\L^2_G(\Omega)}\leq C_p \|\xi\|_{L^p_G(\Omega)}.
\]
\end{prop}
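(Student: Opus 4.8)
The plan is to establish a $G$-version of Doob's maximal inequality with a loss of one integrability exponent, which is precisely what forces the hypothesis $p>2$. Write $M_t:=\GE[|\xi|\,|\,\F_t]$ and $M^*:=\sup_{t\in[0,T]}M_t$. Since $\xi\in\lipT$ is bounded, $M^*$ is a bounded random variable and $\|\xi\|_{\L^2_G(\Omega)}^2=\GE[(M^*)^2]$, the quantity already appearing in Lemma \ref{lem_convergence_in_LL_G}. By Theorem \ref{tw_rep_sub_lin} I would read every $\GE$ as the upper expectation $\sup_{\P\in\mathfrak{P}}\E^{\P}$, and by Proposition \ref{prop_rep_cond_exp} the conditional operator $\GE[\,\cdot\,|\,\F_t]$ dominates $\E^{\P}[\,\cdot\,|\,\F_t]$ for each $\P\in\mathfrak{P}$. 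The whole estimate then reduces to a weak-type bound for $M^*$ followed by a layer-cake (Marcinkiewicz-type) integration.

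The key step is the weak-type inequality $\mu^p\, c(M^*>\mu)\leq \GE[|\xi|^p]$ for all $\mu>0$. To get it I would first use the conditional H\"older inequality for $\GE$, namely $\GE[|\xi|\,|\,\F_t]\leq \GE[|\xi|^p\,|\,\F_t]^{1/p}$, which follows by applying the ordinary conditional H\"older inequality under each $\P$ and taking the essential supremum, using $\E^{\P}[\,\cdot\,|\,\F_t]\leq\GE[\,\cdot\,|\,\F_t]$. Hence $\{M^*>\mu\}\subseteq\{\sup_{t}\GE[|\xi|^p\,|\,\F_t]>\mu^p\}$, and it suffices to prove the weak-$(1,1)$ bound $\lambda\, c(\sup_t L_t>\lambda)\leq\GE[|\xi|^p]$ for $L_t:=\GE[|\xi|^p\,|\,\F_t]$ at the level $\lambda=\mu^p$. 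Now for every fixed $\P\in\mathfrak{P}$ the process $L$ is a nonnegative \cadlag $\P$-supermartingale with constant initial value $L_0=\GE[|\xi|^p]$: indeed, for $s\leq t$ one has $\E^{\P}[L_t\,|\,\F_s]\leq\GE[L_t\,|\,\F_s]=L_s$ by the tower property of $\GE$. The classical maximal inequality for nonnegative supermartingales (optional stopping at $\tau=\inf\{t:L_t>\lambda\}$) gives $\lambda\,\P(\sup_t L_t>\lambda)\leq L_0=\GE[|\xi|^p]$, and taking the supremum over $\P\in\mathfrak{P}$ yields the claimed weak-type bound.

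Finally I would integrate. Using $\GE[(M^*)^2]=\sup_{\P}\E^{\P}[(M^*)^2]\leq 2\int_0^\infty\mu\, c(M^*>\mu)\,d\mu$, the trivial bound $c\leq1$, and the weak-type estimate, I split the integral at $R:=\GE[|\xi|^p]^{1/p}$:
\[
	\GE[(M^*)^2]\leq 2\int_0^{R}\mu\,d\mu+2\,\GE[|\xi|^p]\int_{R}^{\infty}\mu^{1-p}\,d\mu=\left(1+\tfrac{2}{p-2}\right)\GE[|\xi|^p]^{2/p}=\tfrac{p}{p-2}\,\|\xi\|_{L^p_G(\Omega)}^2.
\]
The tail integral converges precisely because $p>2$, so taking square roots gives the assertion with $C_p:=\sqrt{p/(p-2)}$.

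The main obstacle, and the reason the statement is not a verbatim Doob inequality, is that $M_t=\GE[|\xi|\,|\,\F_t]$ is only a \emph{supermartingale}, not a martingale, under each individual $\P\in\mathfrak{P}$; consequently the usual $L^2$ Doob inequality (which would give the bound with $p=2$ and no loss) is unavailable, and the supermartingale maximal inequality delivers only a weak-type estimate. Recovering a strong $L^2$ bound from it by interpolation is possible only when the target exponent $2$ is strictly below the weak-type exponent $p$, which is exactly the hypothesis $p>2$. A secondary technical point to verify is that the upper-expectation representation may be applied to the path functional $(M^*)^2$ and that the supremum over $t$ is compatible with that representation; this is handled by the boundedness of $\xi$ and its cylinder structure, which make $M$ an explicit adapted functional admitting \cadlag versions under every $\P$, so that the stopping-time argument above is legitimate.
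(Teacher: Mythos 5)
Your argument is correct, but it reaches the key weak-type estimate by a genuinely different (and shorter) route than the paper. The paper follows Soner--Touzi--Zhang: it bounds $\P(\{M^*_T\geq\lambda\})\leq\lambda^{-1}\E^{\P}[M_{\tl}\I_{\{\tl\leq T\}}]$ at the stopping time $\tl=\inf\{t\colon M_t\geq\lambda\}$, then selects countably many measures $\P_j\in\mathfrak{P}(\tl,\P)$ attaining the essential supremum in Proposition \ref{prop_rep_cond_exp}, builds an $\F_{\tl}$-measurable partition $\{A^n_j\}_j$ and pasted measures $\hat\P^n$, and applies H\"older under $\hat\P^n$ together with the pasting estimate of Lemma \ref{lem_doob}, namely $\sum_k\E^{\P^k}[\xi\I_{A_k}]\leq\GE[\xi]$. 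You obtain the same bound $c(\{M^*_T>\lambda\})\leq\lambda^{-p}\GE[|\xi|^p]$ without any pasting: conditional Jensen under each $\P'\in\mathfrak{P}(t,\P)$ plus the essential-supremum representation gives $\GE[|\xi|\,|\F_t]\leq\GE[|\xi|^p\,|\F_t]^{1/p}$, and the auxiliary process $L_t=\GE[|\xi|^p\,|\F_t]$ is a nonnegative $\P$-supermartingale with deterministic initial value $\GE[|\xi|^p]$ (by Proposition \ref{prop_rep_cond_exp} and the tower property; note $|\xi|^p\in\lipT$ since $\xi$ is a bounded Lipschitz cylinder variable, so $L$ is well defined), whence the classical maximal inequality for nonnegative supermartingales applies measure by measure. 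This replaces the countable selection, the partition construction, the measures $\hat\P^n$ and Lemma \ref{lem_doob} by two soft facts, which is a real simplification; what the paper's heavier machinery buys is a direct control of $M$ itself at the stopping time, which is the form of the argument that generalizes in \cite{Soner_mart_rep}, but for the present statement your reduction is sufficient. The tail (layer-cake) integration is then identical to the paper's, with the same split point and the same role of $p>2$. The only place where you are as informal as the paper is path regularity: the inequality $M_t\leq L_t^{1/p}$, valid for each fixed $t$ up to a $\P$-null set, must be upgraded to hold for all $t$ simultaneously, and the maximal inequality needs a c\`adl\`ag version of $L$ under the raw filtration; both points are settled exactly as in the paper's Lemma \ref{lem_doob} (q.s. c\`adl\`ag modifications of conditional $G$-expectations, rationals plus right-continuity, and Blumenthal's zero-one law for the measures in $\mathfrak{P}$), so this is a shared technicality rather than a gap specific to your proof.
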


In a proof of the proposition we need the following lemma:
\begin{lem}\label{lem_doob}
	Assume Assumption \ref{ass1}. Let $\tau\leq T$ be stopping time. Fix $\P\in\p$, $\P^1,\ldots,\P^n\in\p(\tau,\P)$ and let $\{A_1,\ldots, A_n\}$ be a $\F_{\tau}$-measurable partition of $\Omega$. Then for any $\xi\in Lip(\Omega_T)$ we have
	\[
		\sum_{k=1}^n\E^{\P^k}[\xi\I_{A_k}]\leq \GE[\xi].
	\]
\end{lem}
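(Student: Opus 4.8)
The plan is to avoid any explicit pasting of measures and instead read the bound directly off the representation of the conditional sublinear expectation in Proposition \ref{prop_rep_cond_exp} together with the tower property. The guiding observation is that each $\E^{\P^k}[\,\cdot\,|\F_\tau]$ is dominated $\P$-a.s.\ by $\GE[\,\cdot\,|\F_\tau]$ precisely because $\P^k\in\mathfrak{P}(\tau,\P)$, and that weighting by the $\F_\tau$-measurable partition $\{A_k\}$ collapses this conditional bound into the unconditional one.

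Concretely, I would first condition each term on $\F_\tau$. Since $A_k\in\F_\tau$, the tower property under $\P^k$ gives
\[
	\E^{\P^k}[\xi\I_{A_k}]=\E^{\P^k}\!\left[\I_{A_k}\,\E^{\P^k}[\xi|\F_\tau]\right].
\]
The integrand $\I_{A_k}\,\E^{\P^k}[\xi|\F_\tau]$ is bounded (as $\xi\in\lipT$ is bounded) and $\F_\tau$-measurable, and $\P^k$ agrees with $\P$ on $\F_\tau$; hence this equals $\E^{\P}\!\left[\I_{A_k}\,\E^{\P^k}[\xi|\F_\tau]\right]$. This is the one place where the defining property of $\mathfrak{P}(\tau,\P)$ is used.

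Next I would invoke Proposition \ref{prop_rep_cond_exp}: as $\P^k\in\mathfrak{P}(\tau,\P)$ is one of the measures entering the essential supremum, $\E^{\P^k}[\xi|\F_\tau]\leq\GE[\xi|\F_\tau]$ holds $\P$-a.s. Summing over $k$ and using $\sum_k\I_{A_k}=1$ yields
\[
	\sum_{k=1}^n\E^{\P^k}[\xi\I_{A_k}]\leq\sum_{k=1}^n\E^{\P}\!\left[\I_{A_k}\,\GE[\xi|\F_\tau]\right]=\E^{\P}\!\left[\GE[\xi|\F_\tau]\right].
\]
Finally, since $\P\in\mathfrak{P}$ one has $\E^{\P}[Z]\leq\GE[Z]$ for every $Z\in L^1_G(\Omega)$; applying this to $Z=\GE[\xi|\F_\tau]$ and using the tower property $\GE[\GE[\xi|\F_\tau]]=\GE[\xi]$ closes the chain and gives the assertion.

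The step demanding the most care is the use of Proposition \ref{prop_rep_cond_exp} and the tower property at the \emph{stopping time} $\tau$ rather than at a deterministic time, since the representation is stated for $\F_t$ with fixed $t$. I would first dispatch the case of a $\tau$ taking finitely many values $t_1<\cdots<t_m$, splitting $\Omega=\bigcup_j\{\tau=t_j\}$ with $\{\tau=t_j\}\in\F_{t_j}$ and applying the deterministic-time representation on each piece, and then pass to general $\tau\leq T$ by approximating $\tau$ from above by finitely-valued stopping times and using continuity of the conditional expectation in the $\|\cdot\|_1$-norm. Should one prefer to stay purely at the level of measures, an equivalent route is to build a single pasted law $\P^{\ast}\in\mathfrak{P}$ with $\E^{\P^{\ast}}[\xi]=\sum_k\E^{\P^k}[\xi\I_{A_k}]$ by splicing the It\^o--L\'evy integrands of the $\P^k$ on the sets $A_k$ after $\tau$; there the obstacle instead becomes verifying that the spliced integrand stays in $\a^{\u}_{0,\infty}$ and induces exactly $\P^{\ast}$.
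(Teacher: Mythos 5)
Your skeleton (tower property under each $\P^k$, domination of $\E^{\P^k}[\xi|\,\cdot\,]$ by the conditional sublinear expectation, switching from $\P^k$ to $\P$ on $\F_\tau$, then a supermartingale step) is the same as the paper's, and your argument is genuinely complete when $\tau$ takes finitely many values: there everything reduces to deterministic times $t_j$, where Proposition \ref{prop_rep_cond_exp}, the tower property, and discrete optional sampling for the $\P$-supermartingale $\GE[\xi|\F_{t_j}]$ are all available, and $\I_{A_k\cap\{\tau=t_j\}}\GE[\xi|\F_{t_j}]$ is $\F_\tau$-measurable so the measure switch is legitimate. The gap is the passage to a general stopping time, which is where the actual content of the lemma lies (note that the lemma is applied in Proposition \ref{prop_LL^2_G_contains_L^p_G} to a hitting time $\tau_\lambda$, so the general case cannot be dispensed with). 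The objects your main chain relies on --- $\GE[\xi|\F_\tau]$, its representation as an essential supremum over $\mathfrak{P}(\tau,\P)$, and the identity $\GE[\GE[\xi|\F_\tau]]=\GE[\xi]$ --- are simply not defined or proved in this framework at a stopping time, and your proposed repair does not supply them. If you approximate $\tau$ from above by finitely-valued stopping times $\tau_m\downarrow\tau$, the hypotheses of the discrete case are not inherited: since $\F_\tau\subseteq\F_{\tau_m}$, agreement of $\P^k$ with $\P$ on $\F_{\tau_m}$ is a \emph{stronger} requirement than agreement on $\F_\tau$, i.e. $\mathfrak{P}(\tau_m,\P)\subseteq\mathfrak{P}(\tau,\P)$, so your measures $\P^k$ need not lie in $\mathfrak{P}(\tau_m,\P)$. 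Concretely, the measure switch $\E^{\P^k}\left[\I_{A_k}\GE[\xi|\F_{\tau_m}]\right]=\E^{\P}\left[\I_{A_k}\GE[\xi|\F_{\tau_m}]\right]$ breaks down because the integrand is only $\F_{\tau_m}$-measurable, not $\F_\tau$-measurable; the alternative of conditioning back down to $\F_\tau$ under $\P^k$ requires optional sampling between $\tau$ and $\tau_m$, which is circular.

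Moreover, even granting that, the limit $m\to\infty$ is not a matter of ``continuity of the conditional expectation in the $\|\cdot\|_1$-norm'': what is needed is convergence of the \emph{evaluations along random times}, $\GE[\xi|\F_{\tau_m}]\to$ (something deserving the name $\hat M_\tau$) and the identification $\E^{\P^k}[\xi|\F_\tau]=M^k_\tau$, and both are statements about path regularity of the processes $\hat M_t:=\GE[\xi|\F_t]$ and $M^k_t:=\E^{\P^k}[\xi|\F_t]$, not about continuity of the operators in $t$. Note also that Proposition \ref{prop_rep_cond_exp} gives $M^k_t\leq\hat M_t$ only up to a null set \emph{depending on $t$}; to evaluate at $\tau$ one needs the uniform, pathwise inequality. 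This is exactly what the paper's proof spends its effort on: it constructs a q.s.\ \cadlag version of $\hat M$ (from the representation theorem) and a \cadlag $\fil$-martingale version of $M^k$ with respect to the raw filtration (via the Blumenthal zero--one law for $\P^k$ and right-continuity of the augmented filtration), proves $\P^k\bigl(M^k_t\leq\hat M_t,\ \forall t\in[0,T]\bigr)=1$, and only then runs the chain you have in mind, using classical Doob optional sampling twice: under $\P^k$ to write $\E^{\P^k}[\xi\I_{A_k}]=\E^{\P}[\I_{A_k}M^k_\tau]\leq\E^{\P}[\I_{A_k}\hat M_\tau]$, and under $\P$ for the supermartingale $\hat M$ to get $\E^{\P}[\hat M_\tau]\leq\hat M_0=\GE[\xi]$. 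In short, the \cadlag-modification work is not an optional refinement you can defer --- it is the lemma; and neither your approximation scheme nor the measure-pasting alternative you sketch (whose obstacle you correctly identify but do not resolve) provides it.
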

\begin{proof}[Proof of Lemma \ref{lem_doob}]
	First introduce the following notation
	\[
		M^k_t:=\E^{\P^k}[\xi|\F_t],\quad \hat M_t:=\GE[\xi|\F_t].
	\]
	By the representation of the conditional sublinear expectation in Proposition \ref{prop_rep_cond_exp} it is easy to see that for all $t$ we have $M^k_t\leq \hat M_t\, \P^k-a.s.$ We need, however, a stronger property: that the $\P^k$-null set doesn't depend on $t$. To see that this property holds note that by the representation theorem we know that $\hat M$ has a q.s.-modification (hence also $\P^k$-modification) which has \cadlag paths apart from a polar set (which is also a $\P^k$-null set). Moreover, we can also choose the $\P$-modification of $M^k$ which has $\P^k$-a.a. \cadlag paths. The standard theorems for such regularity require the filtration to satisfy the usual conditions, whereas we work under the raw filtration which of course does not satisfy the usual conditions. But this is not a problem in our setting since  the measure $\P^k$ satisfies Blumenthal zero-one law as an push-forward measure of the law of a \levy process. Hence the augmented filtration $\{\F_t^{\P^k}\}_t$ is right-continuous. Moreover we can always choose a unique \cadlag modification of a \cadlag martingale $\E^{\P^k}[\xi|\F^{\P^k}_t]$ which is also a martingale w.r.t. unaugmented filtration $\fil=\{\F_t\}_t$ (compare with Lemma 2.1 in \cite{Soner_mart_rep} and Lemma 2.4 in \cite{Soner_quasi_anal}). Now taking $\P^k$-\cadlag modifications of $M^k$ and $\hat M$ we claim via standard arguments that
	\[
		\P^k(M^k_t\leq \hat M_t,\, \forall\, t\in[0,T])=1.	
	\]
As a consequence we have that
	\[
		1=\P^k(M^k_{\tau}\leq \hat M_{\tau})=\P(M^k_{\tau}\leq \hat M_{\tau})
	\]
	as $\P^k=\P$ on $\F_{\tau}$.
	
	Then we easily get via Doob's optional sampling theorem for a $\P^k$-martingale $M^k$ that
	\begin{align}
	\sum_{k=1}^n\E^{\P^k}[\xi\I_{A_k}]=\sum_{k=1}^n\E^{\P^k}[\I_{A_k}\E^{\P^k}[\xi|\F_{\tau}]]=\sum_{k=1}^n\E^{\P}[\I_{A_k}M^k_{\tau}]\leq\sum_{k=1}^n\E^{\P}[\I_{A_k}\hat M_{\tau}]=\E^{\P}[\hat M_{\tau}].
	\end{align}
	Note now that $\hat M$ is a $\P$-supermartingale (what is also an easy consequence of the representation of the conditional sublinear expectation), hence again by Doob's optional sampling theorem for $0$ and $\tau$ we get
		\[
	\sum_{k=1}^n\E^{\P^k}[\xi\I_{A_k}]\leq\E^{\P}[\hat M_{\tau}]\leq\E^{\P}[\hat M_0] =\hat M_0=\GE[\xi].\qedhere
	\]
\end{proof}
\begin{proof}[Proof of Proposition \ref{prop_LL^2_G_contains_L^p_G}]
	The proof follows the argument by \cite{Soner_mart_rep} in Lemma A.2. However we need to adjust a few details to take into consideration the fact that the measures constructed by Soner \emph{et al.} might not necessarily belong to the representation set $\mathfrak{P}$ in our setting. The adjustments however are minor.
	
	First note that without loss of generality we may take $\xi\geq0$. Let $M_t:=\GE[\xi|\F_t]$. $M_t$ has \cadlag paths q.s. by the representation theorem. By the representation of the conditional expectation we have that
	\[
		M_t=\sideset{}{^{\P}}\esup_{\Q\in\mathfrak{P}(t,\P)}\,\E^{\Q}[\xi|\F_t],\ \P-a.s.
	\]
	for every $\P\in\mathfrak{P}$. Define $M^*_t:=\sup_{0\leq s\leq t}\, M_s$. It suffices to show that 
	\[
		\E^{\P}[|M^*_T]^2]\leq C_p \|\xi\|_{L^p_G(\Omega)}\quad \textrm{for all}\quad \P\in\mathfrak{P}.
	\]
	We introduce $\tau_{\lambda}:=\inf\{t\geq0\colon M_t\geq \lambda\}$ for a fixed $\lambda>0$. $M$ is \cadlag and $\fil$-adapted, thus $\tl$ is a $\fil$-stopping time and we have
	\begin{equation}\label{eq_max_ineq_1}
			\P(\{M^*_T\geq\lambda\})=\P(\{\tl\geq T\})\leq \frac{1}{\lambda}\E^{\P}\left[M_{\tl}\I_{\{\tl\leq T\}}\right].
	\end{equation}
	Now there exists a sequence $\{\P_j,\ j\geq 1\}\subset \mathfrak{P}(\tl,\P)$ such that
	\[
		M_{\tl}=\sup_{j\geq1}\E^{\P_j}\left[\xi\, |\F_{\tl}\right]\,\ \P-a.s.
	\]
	For each $n\geq 1$ denote
		\[
		M^n_{\tl}=\sup_{1\leq j\leq n}\E^{\P_j}\left[\xi\, |\F_{\tl}\right].
	\]
	
	Introduce also sets 
	$\tilde{A}_j^{n}=\{M^n_{\tl}= \E^{\P_j}\left[\xi\, |\F_{\tl}\right]\},\ 1\leq j\leq n$, and put $A^{n}_1:=\tilde{A}^{n}_1$ and $A^{n}_j:=\tilde{A}^{n}_j\setminus \cup_{1\leq i \leq j-1} \tilde{A}_i^{n}$ for $2\leq j\leq n$. Then sets $\{A^{n}_j\}_j\subset \F_{\tl}$ form the partition of $\Omega$ for al $n\geq 1$. Hence we can introduce another probability measure $\hat\P^n$ by putting $\hat\P^n(A):=\sum_{j=1}^n\P^j(A\cap A^n_j)$. Note that $\hat \P^n=\P$ on $\F_{\tau}$.
	
	Fix $n\geq1$. Then we have for $q:=p/(p-1)$ by H\"older inequality and Lemma \ref{lem_doob}
	\begin{align*}
		\E^{\P}\left[M^n_{\tl}\I_{\{\tl\leq T\}}\right]=&\sum_{j=1}^n\E^{\P_j}\left[M^n_{\tl}\I_{\{\tl\leq T\}}\I_{A_j^n}\right]=\sum_{j=1}^n\E^{\P_j}\left[\E^{\P_j}\left[\xi\, |\F_{\tl}\right]\I_{\{\tl\leq T\}\cap A^n_j}\right]\\
		\leq&\sum_{j=1}^n\E^{\P_j}\left[\xi\,\I_{\{\tl\leq T\}\cap A_j^n}\right]=\E^{\hat\P^n}\left[\xi\,\I_{\{\tl\leq T\}}\right]\\
		 \leq &\left(\E^{\hat\P^n}\left[|\xi|^p\right]\right)^{\frac{1}{p}}\left(\hat\P^n\left(\{\tl\leq T\}\right)\right)^{\frac{1}{q}}\\
		 =&\left(\sum_{j=1}^n\E^{\P_j}\left[|\xi|^p\I_{A^n_j}\right]\right)^{\frac{1}{p}}\left(\P\left(\{\tl\leq T\}\right)\right)^{\frac{1}{q}}
		 \leq \left(\GE\left[|\xi|^p\right]\right)^{\frac{1}{p}}\left(\P(\{M^*_T\geq\lambda\})\right)^{\frac{1}{q}}.
	\end{align*}
	By going with $n$ to $\infty$ we get then that
	\[
	\E^{\P}\left[M_{\tl}\I_{\{\tl\leq T\}} \right]\leq \|\xi\|_{L^p_G(\Omega)}\left(\P(\{M^*_T\geq\lambda\})\right)^{\frac{1}{q}}.
	\]
	Plugging this estimate into eq. \eqref{eq_max_ineq_1} we get
	\[
	\P(\{M^*_T\geq\lambda\})\leq \frac{1}{\lambda}\E^{\P}\left[M_{\tl}\I_{\{\tl\leq T\}}\right]\leq  \frac{1}{\lambda} \|\xi\|_{L^p_G(\Omega)}\left(\P(\{M^*_T\geq\lambda\})\right)^{\frac{1}{q}}
	\]
	and hence
		\[
	\P(\{M^*_T\geq\lambda\})\leq  \frac{1}{\lambda^p} \|\xi\|^p_{L^p_G(\Omega)}.
	\]
	Just as Soner \emph{et al.} we fix $\lambda_0>0$ and compute
	\begin{align*}
		\E^{\P}[M^*_T]&=2\int_0^{\infty}\lambda\P(M^*_T>\lambda)d\lambda\leq 2\int_0^{\lambda_0}\lambda d\lambda +2\int_{\lambda_0}^{\infty}\lambda\P(M^*_T>\lambda)d\lambda\\
		&\leq \lambda_0^2+2 \|\xi\|^p_{L^p_G(\Omega)}\int_{\lambda_0}^{\infty}\frac{1}{\lambda^{p-1}}d\lambda= \lambda_0^2+2 \|\xi\|^p_{L^p_G(\Omega)}\frac{2}{p-2}\lambda_0^{2-p}.
			\end{align*}
		Hence taking $\lambda_0:= \|\xi\|_{L^p_G(\Omega)}$ we arrive at the inequality
		\[
			\E^{\P}[M^*_T]\leq C_p \|\xi\|^2_{L^p_G(\Omega)}
		\]
		and the constant $C_p$ doesn't depend on $\P$. To conclude we take the supremum over $\P\in\mathfrak{P}$.
\end{proof}

\end{document}